\numberwithin{equation}{section}
\newcounter{count}
\theoremstyle{plain}
\newtheorem{theorem}{Theorem}[section]
\newtheorem{lemma}[theorem]{Lemma}
\newtheorem{prop}[theorem]{Proposition}
\newtheorem{coro}[theorem]{Corollary}
\newtheorem{claim}[theorem]{Claim}
\theoremstyle{definition}
\newtheorem{defi}[theorem]{Definition}
\newtheorem{rema}[theorem]{Remark}
\newtheorem{example}[theorem]{Example}
\newcommand{\bR}{\mathbb{R}}
\newcommand{\bC}{\mathbb{C}}
\newcommand{\bQ}{\mathbb{Q}}
\newcommand{\bN}{\mathbb{N}}
\newcommand{\bZ}{\mathbb{Z}}
\newcommand{\bP}{\mathbb{P}}
\newcommand{\bK}{\mathbb{K}}
\newcommand{\cO}{\mathcal{O}}
\DeclareMathOperator{\Spec}{\mathrm{Spec}}
\DeclareMathOperator{\Rat}{\mathrm{Rat}}
\DeclareMathOperator{\ord}{\mathrm{ord}}
\DeclareMathOperator{\Div}{\mathrm{Div}}
\DeclareMathOperator{\divi}{\mathrm{div}}
\DeclareMathOperator{\Gal}{\mathrm{Gal}}
\DeclareMathOperator{\vol}{\mathrm{vol}}
\DeclareMathOperator{\Supp}{\mathrm{Supp}}
\author{François Ballaÿ}
\title[Arithmetic Okounkov bodies and positivity of adelic divisors]{Arithmetic Okounkov bodies and positivity of adelic Cartier divisors}
\date{\today}
\begin{document}

\begin{abstract}  In algebraic geometry, theorems of Küronya and Lozovanu characterize the ampleness and the nefness of a Cartier divisor on a projective variety in terms of the shapes of its associated Okounkov bodies. We prove the analogous result in the context of Arakelov geometry, showing that the arithmetic  ampleness and nefness of an adelic $\bR$-Cartier divisor $\overline{D}$ are determined by arithmetic Okounkov bodies in the sense of Boucksom and Chen. Our main results  generalize to arbitrary projective varieties criteria for the positivity of toric metrized $\bR$-divisors on toric varieties established by Burgos Gil, Moriwaki, Philippon and Sombra.  As an application, we show that the absolute minimum of $\overline{D}$ coincides with the infimum of the Boucksom--Chen concave transform, and we prove a converse to the arithmetic Hilbert-Samuel theorem under mild positivity assumptions.  We also establish new criteria for the existence of generic nets of small points and subvarieties.

\end{abstract}

\maketitle

\setcounter{tocdepth}{1}
\tableofcontents
\setcounter{tocdepth}{2}

\section{Introduction}

 The theory of Okounkov bodies, developed independently by Lazarsfeld and Musta\c{t}\u{a} \cite{LazMus} and by Kaveh and Khovanskii \cite{KavehKhovanskii}, builds on earlier work of Okounkov to establish a connection between algebraic geometry and convex geometry outside of the toric framework. Let $D$ be a big Cartier divisor on a smooth projective variety $X$  over an algebraically closed field. Given a valuation of maximal rank $\nu$ on the field of rational functions $\Rat(X)$, the Okounkov body of $D$ with respect to $\nu$ is the convex set $\Delta_\nu(D) \subseteq \bR^{\dim X}$ defined as 
  the closure of 
\[ \left\{\nu(s)/n\ \mid \ n\geq 1,  \ s \in \Gamma(X,nD)^\times  \right\}\]
for the Euclidean topology, where $\Gamma(X,nD)^\times$ denotes the space of non-zero global sections of $\cO_X(nD)$. This construction can be considered as a generalization of the rational polytope determined by a toric divisor on a smooth toric variety \cite[§~6.1]{LazMus}.
  Since their introduction, Okounkov bodies have attracted a lot of attention and it has been an active topic of research to describe which geometric properties and invariants of the pair $(X,D)$ they encode. It turned out that many aspects of the ``toric dictionary" describing the geometry of toric varieties in  combinatorial terms can be extended to all smooth projective varieties by using Okounkov bodies. Among the numerous efforts on this topic, Küronya and Lozovanu studied in depth the relations between the positivity of $D$ and its associated Okounkov bodies. They established remarkable characterizations for the nefness and the ampleness of $D$ in terms of the shapes of the sets $\Delta_\nu(D)$, when $\nu$ varies in suitable families of valuations \cite{KL,KLDoc}. Although  Küronya and Lozovanu worked in characteristic zero, these characterizations where recently generalized in arbitrary characteristic by Park and Shin \cite[Theorem 1.1]{ParkShin}.

Arakelov theory is a powerful approach to Diophantine geometry, that develops arithmetic analogues of tools from algebraic geometry to tackle deep problems in number theory. It allows to study the arithmetico-geometric properties of a projective variety $X$ defined over a number field (or more generally, a global field) by looking at its adelic Cartier divisors $\overline{D}$, which are usual Cartier divisors equipped with a suitable collection of metrics.
 In \cite{BPS14}, Burgos Gil, Philippon and Sombra started a program relating the arithmetic geometry of toric varieties to convex analysis, providing an arithmetic analogue of the toric dictionary in the context of Arakelov geometry. 
  In joint work with Moriwaki  \cite{BMPS}, they  established criteria based on convex analysis for the positivity of toric adelic $\bR$-Cartier divisors on toric varieties (such as arithmetic nefness and arithmetic ampleness in the sense of Zhang \cite{Zhangplav}).  Arithmetic analogues of the notion of Okounkov bodies were introduced independently by Yuan \cite{YuanOkounkov} and by Boucksom and Chen \cite{BC} in the context of Arakelov geometry\footnote{Although the constructions of Yuan and of Boucksom and Chen are closely related, they do not coincide in general (see \cite[§~4.3]{BC}). In this text, we focus on Boucksom and Chen's definition.}. Given an adelic Cartier divisor $\overline{D}$ on a projective variety $X$ over a number field $K$, arithmetic Okounkov bodies are known to encode the following fundamental invariants, under suitable mild assumptions: the arithmetic volume of $\overline{D}$ (\cite[Theorem A]{YuanOkounkov}, \cite[Theorem 2.8]{BC}), the height $h_{\overline{D}}(X)$ of $X$ with respect to $\overline{D}$ (\cite[Theorems 3.1 and 3.7]{BC}), and the essential minimum of the height function $\widehat{h}_{\overline{D}} \colon X(\overline{K}) \rightarrow \bR$ associated to $\overline{D}$ (\cite[Corollary 1.7]{BaEM}). In analogy with the geometric situation and in view of the results of Burgos Gil, Moriwaki, Philippon and Sombra in the toric case, it is natural to expect that arithmetic Okounkov bodies encode arithmetic ampleness and nefness of adelic Cartier divisors on non-necessarily toric projective varieties. The main goal of the present paper is to show that this is indeed the case. Our main results are analogues of the aforementioned theorems of Küronya and Lozovanu in the context of Arakelov geometry (see Theorems \ref{thmnefintro} and \ref{thmampleintro} below). In subsections \ref{paragapplicationHS} and \ref{paragapplicationgeneric}, we give some applications towards the arithmetic Hilbert-Samuel theorem and new criteria for the existence of generic nets of small points and subvarieties.

\subsection{Main results}

  Let $X$ be a normal and geometrically integral projective variety of dimension $d \geq 1$ over a global field $K$, and let $\overline{K}$ be an algebraic closure of $K$.  An adelic $\bR$-Cartier divisor on $X$ is a pair $\overline{D} = (D,(g_v)_{v\in \Sigma_K})$ consisting of an $\bR$-Cartier divisor and a suitable collection $(g_v)_{v\in \Sigma_K}$   of $D$-Green functions on the analytifications $X_{\bC_v}^{\mathrm{an}}$ of $X$, when $v$  varies among the places of $K$ (see Definition \ref{defiadelicCartier}). The notion of adelic $\bR$-Cartier divisors is due to Moriwaki \cite{MoriwakiMAMS}, and generalizes the one of adelic line bundles in the sense of Zhang \cite{Zhangadelic}. Let $\nu$ be a valuation of maximal rank on $\Rat(X_{\overline{K}})$ (see Definition \ref{defivaluation}). 
 Assuming that $D$ is big, Boucksom and Chen \cite{BC} defined a concave function 
 \[G_{\overline{D}, \nu} \colon \Delta_{\nu}(D) \rightarrow \bR \cup \{-\infty\}\]
called the \emph{concave transform} of $\overline{D}$ with respect to $\nu$, which encodes important information on the arithmetic of the graded linear series $\bigoplus_{n\in\bN} \Gamma(X,nD)$  (see §~\ref{paragconcave}). Following \cite[Definition 2.7]{BC}, we define the \emph{arithmetic Okounkov body} of $\overline{D}$ with respect to $\nu$ as 
 \[\widehat{\Delta}_{\nu}(\overline{D})  = \{ (\alpha, t) \in \Delta_{\nu}(D)\times \bR_+ \ | \ G_{\overline{D},\nu}(\alpha) \geq t  \}.\]
 It is a compact convex subset of $\bR^{d+1}$.  In subsection \ref{paragarithmNO}, we will extend the definition of $\widehat{\Delta}_{\nu}(\overline{D})$  to include the case when $D$ is not big, by using limits of arithmetic Okounkov bodies in the above sense.

 A key notion to study the positivity of an adelic $\bR$-Cartier divisor $\overline{D} \in \widehat{\Div}(X)_{\bR}$ is its associated height function $\widehat{h}_{\overline{D}} \colon X({\overline{K}}) \rightarrow \bR$, which measures the arithmetic complexity of closed points on $X$ and also plays a fundamental role in applications of Arakelov theory to Diophantine geometry. The central result of this paper gives a lower bound for the height of points in terms of Boucksom and Chen's concave transform.
 \begin{theorem}[Theorem \ref{thmkey}]\label{thmkeyintro} Let $x \in X_{\overline{K}}$ be a closed point, and let $\nu$ be a valuation of maximal rank on $\Rat(X_{\overline{K}})$  centered at $x$. For any adelic $\bR$-Cartier divisor $\overline{D} = (D,(g_v)_{v \in \Sigma_K})$ such that $D$ is nef, we have $0_{\bR^d} \in \Delta_\nu(D)$ and 
 \[\widehat{h}_{\overline{D}}(x) \geq G_{\overline{D},\nu}(0_{\bR^d}).\]
 \end{theorem}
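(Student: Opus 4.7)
The plan is to reduce to the case where $D$ is ample via a perturbation argument, and then to obtain the desired bound from the standard evaluation formula for the height of $x$ applied to a global section of $nD$ not vanishing at $x$.

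\textbf{Step 1 (reduction to the ample case).} Fix an ample adelic $\bR$-Cartier divisor $\overline{A}$ on $X$ and set $\overline{D}_{\varepsilon}=\overline{D}+\varepsilon\overline{A}$ for $\varepsilon>0$. Since $D$ is nef, $D_{\varepsilon}$ is ample and in particular big. Using the extension of arithmetic Okounkov bodies to non-big divisors announced in subsection~\ref{paragarithmNO}, together with the linearity $\widehat{h}_{\overline{D}_{\varepsilon}}(x)=\widehat{h}_{\overline{D}}(x)+\varepsilon\,\widehat{h}_{\overline{A}}(x)$, it suffices to prove the inequality for $\overline{D}_{\varepsilon}$ when $D_{\varepsilon}$ is ample, and then let $\varepsilon\to 0$. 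The only nontrivial point at this stage is the convergence $G_{\overline{D}_{\varepsilon},\nu}(0_{\bR^{d}})\to G_{\overline{D},\nu}(0_{\bR^{d}})$, which I expect to follow from continuity properties of the concave transform built into the limit construction in subsection~\ref{paragarithmNO}.

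\textbf{Step 2 ($0_{\bR^{d}} \in \Delta_{\nu}(D)$).} Assuming $D$ is ample, the linear system $|nD|$ is base-point-free on $X_{\overline{K}}$ for $n \gg 0$, so there exists a section $s \in \Gamma(X,nD)^{\times}$ with $s(x)\neq 0$. As $\nu$ is centered at $x$ and $s$ is regular and non-vanishing there, we have $\nu(s)=0$, and hence $0_{\bR^{d}}=\nu(s)/n \in \Delta_{\nu}(D)$.

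\textbf{Step 3 (the key inequality).} For such an $s \in \Gamma(X,nD)^{\times}$ with $s(x) \neq 0$, the standard evaluation formula for the height reads
\[
n\,\widehat{h}_{\overline{D}}(x)
=\widehat{h}_{n\overline{D}}(x)
=\frac{1}{[K(x):K]}\sum_{v\in\Sigma_{K}}\sum_{\sigma\colon K(x)\hookrightarrow\bC_{v}}\bigl(-\log\|s(\sigma(x))\|_{ng_{v}}\bigr),
\]
where the inner sum runs over the $[K(x):K]$ embeddings of $K(x)$ into $\bC_{v}$ extending $K\hookrightarrow\bC_{v}$. Bounding each term by $-\log\|s\|_{ng_{v},\sup}$ at every place yields
\[
n\,\widehat{h}_{\overline{D}}(x)\ \geq\ \sum_{v\in\Sigma_{K}}\bigl(-\log\|s\|_{ng_{v},\sup}\bigr).
\]
The right-hand side, divided by $n$, is by construction the filtration level of $s$ in the Boucksom--Chen $\bR$-filtration on the graded linear series $\bigoplus_{n}\Gamma(X,nD)$ induced by the adelic metrics $(g_{v})_{v}$. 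Taking the supremum over all $s \in \Gamma(X,nD)^{\times}$ with $\nu(s)=0$ and then the limit as $n\to\infty$ recovers $G_{\overline{D},\nu}(0_{\bR^{d}})$, since the concave transform at the vertex $0_{\bR^{d}}$ of $\Delta_{\nu}(D)$ captures exactly this asymptotic maximum filtration level over sections annihilated by $\nu$.

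\textbf{Step 4 (main obstacle).} The most delicate point will be the precise identification in Step~3 of $\limsup_{n}\tfrac{1}{n}\sup_{\nu(s)=0}\sum_{v}(-\log\|s\|_{ng_{v},\sup})$ with $G_{\overline{D},\nu}(0_{\bR^{d}})$: in the Boucksom--Chen framework the concave transform is typically packaged through slopes (Harder--Narasimhan or successive minima) of the adelic vector bundles $\Gamma(X,nD)$ rather than through bare sup-norms, and one must verify that the asymptotic maximum slope of sections with $\nu(s)=0$ is indeed captured by this cruder sup-norm bound. A secondary technical complication is the continuity statement invoked in Step~1, which must be established in tandem with the limit procedure of subsection~\ref{paragarithmNO}.
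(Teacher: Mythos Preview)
Your Step~3 contains a genuine gap that the paper's proof is built to avoid. The identification
\[
G_{\overline{D},\nu}(0_{\bR^d}) \;=\; \limsup_{n\to\infty}\,\frac{1}{n}\,\sup_{\substack{s\in\Gamma(X,nD)^\times\\ \nu(s)=0}}\Bigl(-\sum_{v}\log\|s\|_{ng_v,\sup}\Bigr)
\]
is not valid in general. By definition, $G_{\overline{D},\nu}(0_{\bR^d})\geq t$ means that $0_{\bR^d}$ lies in the \emph{closure} $\Delta_\nu(V_\bullet^t(\overline{D}))$; this only guarantees sections $s_n\in V_n^{nt}(\overline{D})$ with $\nu(s_n)/n\to 0$, not with $\nu(s_n)=0$. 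It is entirely possible that every section of filtration level $\geq t$ vanishes at $x$ (this is a linear condition, hence stable under taking the span defining $V_n^{nt}$), while the sections with $\nu(s)=0$ that you use in your evaluation formula all have strictly smaller filtration level. Your own Step~4 flags this as the delicate point, but the hoped-for resolution does not hold.

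This is precisely why the paper introduces Proposition~\ref{propCauchy}. Given a section $s\in V_m^{mt_0}(\overline{D})$ with $\ord_x s$ small but possibly nonzero, one cannot evaluate $s$ at $x$; instead one applies a differential operator of order $\ord_x s$ to produce a local section $\bm{\partial}s$ with $\bm{\partial}s(x)\neq 0$, and controls the resulting norms place by place via Cauchy's inequality on small analytic polydiscs. Summing over places yields
\[
\widehat{h}_{\overline{D}}(x)+\varepsilon\;\geq\;-\frac{1}{m}\bigl(h_{m\overline{D}}(s)+\rho(\overline{D},x,\varepsilon)\,\ord_x s\bigr),
\]
and since one can arrange $\ord_x s<m\sigma$ for arbitrarily small $\sigma$ (Claim~\ref{claimexists0}), the error term disappears in the limit. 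Your reduction in Step~1 and the fact $0_{\bR^d}\in\Delta_\nu(D)$ in Step~2 are fine and mirror the paper's Step~3 and the citation to \cite{BoucksomOkounkov}, but the heart of the argument is this differentiation/Cauchy-estimate mechanism, which your proposal is missing.
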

 Roughly speaking, at least when $D$ is big one can think of the quantity $G_{\overline{D}, \nu}(0_{\bR^d})$ as a measure of the arithmetic ``size" of global sections of multiples of $D$ with arbitrarily small valuation. The key new ingredient in the proof of Theorem \ref{thmkeyintro} is a lower bound for the height of $x$ in terms of the order of vanishing of a small section at $x$, that we shall outline at the end of this introduction (§~\ref{paragideaproof}). As an application of Theorem \ref{thmkeyintro}, we obtain new expressions for the absolute minimum of $\overline{D}$, defined as $\zeta_{\mathrm{abs}}(\overline{D}) = \inf_{x\in X(\overline{K})}\widehat{h}_{\overline{D}}(x)$. We denote by  $\mathcal{V}(X_{\overline{K}})$ the set of valuations of maximal rank on $\Rat(X_{\overline{K}})$.
\begin{coro}[Corollary \ref{corominabsG}]\label{corominabsintro} Let $\overline{D} = (D, (g_v)_{v\in \Sigma_K}) \in \widehat{\Div}(X)_\bR$ be a semi-positive adelic $\bR$-Cartier divisor. Then we have 
\[ \zeta_{\mathrm{abs}}(\overline{D}) = \inf_{\nu \in \mathcal{V}(X_{\overline{K}})} G_{\overline{D},\nu}(0_{\bR^d}), \]
where the infimum is over all valuations of maximal rank on $\Rat(X_{\overline{K}})$.
 Moreover, if $D$ is big then 
\[\zeta_{\mathrm{abs}}(\overline{D}) = \inf_{\alpha \in \Delta_\nu(D)} G_{\overline{D},\nu}(\alpha)\]
for any $\nu \in \mathcal{V}(X_{\overline{K}})$.
\end{coro}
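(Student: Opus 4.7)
The plan is to combine Theorem~\ref{thmkeyintro} with the essential minimum formula \cite[Corollary~1.7]{BaEM}, first settling the case where $D$ is big and then extending to arbitrary semi-positive $\overline{D}$ by perturbation. For the lower bound $\zeta_{\mathrm{abs}}(\overline{D}) \geq \inf_\nu G_{\overline{D},\nu}(0_{\bR^d})$, I would argue pointwise: given any closed point $x \in X(\overline{K})$, one can produce a valuation $\nu_x \in \mathcal{V}(X_{\overline{K}})$ centered at $x$, for instance the flag valuation attached to a regular system of parameters at $x$ on a suitable birational model of $X_{\overline{K}}$. The semi-positivity of $\overline{D}$ forces $D$ to be nef, so Theorem~\ref{thmkeyintro} applies and yields $\widehat{h}_{\overline{D}}(x) \geq G_{\overline{D},\nu_x}(0_{\bR^d}) \geq \inf_\nu G_{\overline{D},\nu}(0_{\bR^d})$. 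Taking the infimum over $x$ delivers the bound.

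For the reverse inequality, first assume that $D$ is big. The essential minimum formula $\zeta_{\mathrm{ess}}(\overline{D}) = \inf_{\alpha \in \Delta_\nu(D)} G_{\overline{D},\nu}(\alpha)$ of \cite[Corollary~1.7]{BaEM}, the tautological bound $\zeta_{\mathrm{abs}}(\overline{D}) \leq \zeta_{\mathrm{ess}}(\overline{D})$, and the containment $0_{\bR^d} \in \Delta_\nu(D)$ (itself recorded in Theorem~\ref{thmkeyintro}) together give $\zeta_{\mathrm{abs}}(\overline{D}) \leq G_{\overline{D},\nu}(0_{\bR^d})$ for every $\nu$. Combined with the lower bound from the previous paragraph, this closes the chain
\[
\inf_\nu G_{\overline{D},\nu}(0_{\bR^d}) \;\leq\; \zeta_{\mathrm{abs}}(\overline{D}) \;\leq\; \zeta_{\mathrm{ess}}(\overline{D}) \;=\; \inf_\alpha G_{\overline{D},\nu}(\alpha) \;\leq\; G_{\overline{D},\nu}(0_{\bR^d}),
\]
and taking the infimum over $\nu$ on the rightmost term forces all five quantities to coincide. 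This proves both equalities simultaneously in the big case, and shows en passant that $\zeta_{\mathrm{abs}}(\overline{D}) = \zeta_{\mathrm{ess}}(\overline{D})$ under the hypotheses.

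To remove the bigness assumption from the first equality, I would fix an ample adelic $\bR$-Cartier divisor $\overline{A}$, apply the big case to the perturbations $\overline{D} + \epsilon \overline{A}$, and pass to the limit as $\epsilon \to 0^+$. Continuity of $\zeta_{\mathrm{abs}}$ in the coefficient $\epsilon$ is straightforward from the linearity of the height in $\overline{D}$, and concavity of $G_{\cdot,\nu}(0_{\bR^d})$ in its divisor argument yields pointwise convergence $G_{\overline{D}+\epsilon\overline{A},\nu}(0_{\bR^d}) \to G_{\overline{D},\nu}(0_{\bR^d})$ for each fixed $\nu$. The main technical obstacle is commuting the limit in $\epsilon$ with the infimum over $\nu$: one needs a uniform control of the perturbation error in $\nu$. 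I expect this to follow from an estimate of the shape $|G_{\overline{D}+\epsilon\overline{A},\nu}(0_{\bR^d}) - G_{\overline{D},\nu}(0_{\bR^d})| \leq \epsilon \, C(\overline{A})$ with $C(\overline{A})$ depending only on sup-norms of local Green functions of $\overline{A}$ and not on $\nu$, which should be available from the definition of the arithmetic Okounkov body for non-big $D$ set up in §~\ref{paragarithmNO}.
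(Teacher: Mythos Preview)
Your chain of inequalities contains a fatal misquotation: the formula from \cite{BaEM} reads $\zeta_{\mathrm{ess}}(\overline{D}) = \max_{\alpha \in \Delta_\nu(D)} G_{\overline{D},\nu}(\alpha)$, not $\inf_\alpha G_{\overline{D},\nu}(\alpha)$ (see the sentence after Corollary~\ref{corominabsintro} in the introduction, and equation~\eqref{eqminG} in the proof of Theorem~\ref{thmgeneric}). With the correct sign your displayed chain becomes
\[
\inf_\nu G_{\overline{D},\nu}(0_{\bR^d}) \;\leq\; \zeta_{\mathrm{abs}}(\overline{D}) \;\leq\; \zeta_{\mathrm{ess}}(\overline{D}) \;=\; \max_\alpha G_{\overline{D},\nu}(\alpha) \;\geq\; G_{\overline{D},\nu}(0_{\bR^d}),
\]
and nothing collapses. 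Indeed your conclusion ``$\zeta_{\mathrm{abs}}(\overline{D}) = \zeta_{\mathrm{ess}}(\overline{D})$ under the hypotheses'' is false in general: this equality is precisely the special equidistribution condition characterized in Theorem~\ref{thmgeneric}, and it fails for a generic semi-positive $\overline{D}$ with $D$ big.

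Your lower bound $\zeta_{\mathrm{abs}}(\overline{D}) \geq \inf_\nu G_{\overline{D},\nu}(0_{\bR^d})$ via Theorem~\ref{thmkeyintro} is fine and matches the paper. What you are missing is the reverse inequality $G_{\overline{D},\nu}(0_{\bR^d}) \geq \zeta_{\mathrm{abs}}(\overline{D})$ for \emph{every} $\nu$; this is where the arithmetic Nakai--Moishezon theorem (Theorem~\ref{thmNM}) enters, as advertised just after Corollary~\ref{corominabsintro}. The paper's argument (Corollary~\ref{corominabsG}) is a clean rescaling: set $\overline{D}(t) = \overline{D} - t\overline{\xi}$ with $\widehat{\deg}(\overline{\xi})=1$, so that $G_{\overline{D}(t),\nu} = G_{\overline{D},\nu} - t$ and $\zeta_{\mathrm{abs}}(\overline{D}(t)) = \zeta_{\mathrm{abs}}(\overline{D}) - t$. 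Then $\zeta_{\mathrm{abs}}(\overline{D}) \geq t$ iff $\overline{D}(t)$ is nef, and Corollary~\ref{coronef} (whose implication $(1)\Rightarrow(2)$ uses Nakai--Moishezon) gives $G_{\overline{D}(t),\nu} \geq 0$, i.e.\ $G_{\overline{D},\nu} \geq t$. This handles both equalities and the non-big case simultaneously, with no need for your perturbation step or the uniform-in-$\nu$ estimate you worry about.
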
 
 The lower bounds for $\zeta_{\mathrm{abs}}(\overline{D})$ given in Corollary \ref{corominabsintro} are straightforward consequences of Theorem \ref{thmkeyintro}, together with the fact that the infimum of the concave transform does not depend on the choice of the valuation when $D$ is big (see Lemma \ref{lemmainfG}). The upper bounds follow from Zhang's arithmetic Nakai-Moishezon theorem (Theorem \ref{thmNM}).  We mention that Corollary \ref{corominabsintro} remains valid when $\mathcal{V}(X_{\overline{K}})$ is replaced by any subset of valuations of maximal rank whose centers cover $X_{\overline{K}}$ (see Corollary \ref{corominabsG} for a precise statement). 
 The second equality of Corollary \ref{corominabsintro} is a counterpart to \cite[Corollary 1.3]{BaEM}, where it was shown that the maximum of $G_{\overline{D},\nu}$ coincides with the essential minimum of the height function $\widehat{h}_{\overline{D}}$. These results give an affirmative answer to a question of Burgos Gil, Philippon and Sombra \cite[Remark 3.15]{BPSmin}, and open new approaches to study the absolute and essential minima through convex analysis.

We now present straightforward consequences of Theorem \ref{thmkeyintro} and Corollary \ref{corominabsintro} that relate arithmetic positivity to arithmetic Okounkov bodies. Following \cite{MoriwakiMAMS, BMPS}, we say that $\overline{D}$ is nef if it is semi-positive and if $\zeta_{\mathrm{abs}}(\overline{D})\geq 0$. The following  result is a counterpart to \cite[Theorem A]{KL} in the context of Arakelov geometry. 

\begin{theorem}[Corollary \ref{coronef}]\label{thmnefintro} Let $\overline{D} \in \widehat{\Div}(X)_\bR$ be a semi-positive adelic $\bR$-Cartier divisor. The following assertions are equivalent:
\begin{enumerate}
\item $\overline{D}$ is nef;
\item for every $\nu \in \mathcal{V}(X_{\overline{K}})$, we have $0_{\bR^{d+1}} \in \widehat{\Delta}_{\nu}(\overline{D})$;
\item for every closed point $x \in X_{\overline{K}}$, there exists a valuation of maximal rank $\nu \in \mathcal{V}(X_{\overline{K}})$ centered at $x \in X_{\overline{K}}$ such that $0_{\bR^{d+1}} \in \widehat{\Delta}_{\nu}(\overline{D})$.
\end{enumerate}
\end{theorem}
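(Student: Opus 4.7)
\medskip

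The plan is to derive the three-way equivalence essentially as a formal consequence of Theorem \ref{thmkeyintro} and Corollary \ref{corominabsintro}, the real content of the paper already being contained in those statements. Throughout, I use that by definition $0_{\bR^{d+1}} \in \widehat{\Delta}_\nu(\overline{D})$ is equivalent to the two conditions $0_{\bR^d} \in \Delta_\nu(D)$ and $G_{\overline{D},\nu}(0_{\bR^d}) \geq 0$, and that a semi-positive $\overline{D}$ has in particular $D$ nef, so Theorem \ref{thmkeyintro} is available.

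For the implication (1) $\Rightarrow$ (2), let $\nu \in \mathcal{V}(X_{\overline{K}})$ be arbitrary. Since $D$ is nef, Theorem \ref{thmkeyintro} gives $0_{\bR^d} \in \Delta_\nu(D)$. The nefness of $\overline{D}$ combined with Corollary \ref{corominabsintro} yields
\[ 0 \leq \zeta_{\mathrm{abs}}(\overline{D}) = \inf_{\nu' \in \mathcal{V}(X_{\overline{K}})} G_{\overline{D},\nu'}(0_{\bR^d}) \leq G_{\overline{D},\nu}(0_{\bR^d}), \]
so $0_{\bR^{d+1}} \in \widehat{\Delta}_\nu(\overline{D})$ as required. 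The implication (2) $\Rightarrow$ (3) is immediate once one recalls that every closed point $x \in X_{\overline{K}}$ is the center of at least one valuation of maximal rank (for instance, coming from a flag of subvarieties through $x$ on a suitable birational model), so condition (2) applied to such a $\nu$ gives (3).

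For (3) $\Rightarrow$ (1), I first need to see that $\overline{D}$ is semi-positive; but this is already part of the hypothesis of the theorem, so only the inequality $\zeta_{\mathrm{abs}}(\overline{D}) \geq 0$ remains to be proved. Let $x \in X(\overline{K})$ be arbitrary and choose a valuation of maximal rank $\nu$ centered at $x$ with $0_{\bR^{d+1}} \in \widehat{\Delta}_\nu(\overline{D})$, which in particular forces $G_{\overline{D},\nu}(0_{\bR^d}) \geq 0$. Since $D$ is nef, Theorem \ref{thmkeyintro} applies and gives
\[ \widehat{h}_{\overline{D}}(x) \geq G_{\overline{D},\nu}(0_{\bR^d}) \geq 0. \]
Taking the infimum over $x \in X(\overline{K})$ yields $\zeta_{\mathrm{abs}}(\overline{D}) \geq 0$, completing the argument.

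The only genuine technical point I expect to have to be slightly careful about is the case where $D$ is nef but not big, in which the concave transform $G_{\overline{D},\nu}$ is defined via the limiting procedure from subsection \ref{paragarithmNO}; all three steps above should nonetheless go through unchanged, since Theorem \ref{thmkeyintro} and Corollary \ref{corominabsintro} are already stated in the nef (not necessarily big) setting. Beyond that, the argument is purely formal and there is no real obstacle.
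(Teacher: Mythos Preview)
Your argument for $(3)\Rightarrow(1)$ via Theorem~\ref{thmkeyintro} matches the paper's approach exactly, and $(2)\Rightarrow(3)$ is of course trivial. The issue lies in your use of Corollary~\ref{corominabsintro} for $(1)\Rightarrow(2)$: in the paper's logical order, Corollary~\ref{corominabsG} (= Corollary~\ref{corominabsintro}) is \emph{deduced from} Corollary~\ref{coronef} (= Theorem~\ref{thmnefintro}), not the other way around. So invoking it here is circular. Concretely, the inequality $\zeta_{\mathrm{abs}}(\overline{D}) \leq G_{\overline{D},\nu}(0_{\bR^d})$ that you extract from Corollary~\ref{corominabsintro} is, after the obvious shift of metrics, precisely the implication $(1)\Rightarrow(2)$ you are trying to prove.

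The paper's direct proof of $(1)\Rightarrow(2)$ bypasses this by appealing to the arithmetic Nakai--Moishezon criterion (Theorem~\ref{thmNM}): if $\overline{D}$ is nef and $\overline{A}$ is ample with $\overline{D}+\overline{A}\in\widehat{\Div}(X)_{\bQ}$, then $\overline{D}+\overline{A}$ is ample, hence $V^0_\bullet(\overline{D}+\overline{A}) = V_\bullet(D+A)$ by \cite[Theorem~7.4.1]{CMadelic}, so $G_{\overline{D}+\overline{A},\nu}\geq 0$ everywhere by Lemma~\ref{lemmainfG}, and one concludes via Lemma~\ref{lemmaConcalt}. This is exactly the content the introduction alludes to when it says the upper bound in Corollary~\ref{corominabsintro} ``follows from Zhang's arithmetic Nakai--Moishezon theorem''; you have simply hidden that step inside a black-box citation of a result that the paper has not yet established at this point.
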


  We now turn to characterizations of arithmetic ampleness (see Definition \ref{defipositivity}). In view of the arithmetic Nakai-Moishezon theorem of Zhang (see Theorem \ref{thmNM}), it follows from Corollary \ref{corominabsintro} that a semi-positive adelic $\bR$-Cartier divisor $\overline{D} = (D,(g_v)_{v\in\Sigma_K})$ is ample if and only if the underlying divisor $D$ is (geometrically) ample and the concave transform $G_{\overline{D},\nu}$ has a positive lower bound for some choice of $\nu \in \mathcal{V}(X_{\overline{K}})$ (see Corollary \ref{coroample}). By combining this result with characterizations of geometric ampleness due to  Küronya and Lozovanu and to Park and Shin \cite{KL, KLDoc, ParkShin},  we shall prove more intrinsic statements relating arithmetic ampleness to the shapes of arithmetic Okounkov bodies. For example, we obtain the following theorem when we focus on   arithmetic Okounkov bodies $\widehat{\Delta}_{\widetilde{Y}_{\bullet}}(\overline{D})$ defined with respect to valuations arising from infinitesimal flags $\widetilde{Y}_\bullet$ (Example  \ref{examplevalinf}).  Given a real number $\lambda \geq 0$, we denote by  $ \Delta_{\lambda}^{-1}$   the inverted standard simplex of size $\lambda \geq 0$  (see \cite{ParkShin} and § \ref{paraginfinitesimal}).
   \begin{theorem}[Corollary \ref{thminfample}]\label{thmampleintro} Assume that $X$ is smooth, and let $\overline{D} \in \widehat{\Div}(X)_\bR$ be a semi-positive adelic $\bR$-Cartier divisor. The following assertions are equivalent: 
\begin{enumerate}
\item $\overline{D}$ is ample;
\item there exists a real number $\lambda > 0$ such that $\Delta_{\lambda}^{-1} \times \{\lambda\} \subseteq \widehat{\Delta}_{\widetilde{Y}_\bullet}(\overline{D})$
for every infinitesimal flag  $\widetilde{Y}_{\bullet}$ on $X_{\overline{K}}$;
\item there exists a real number $\lambda > 0$ such that for every closed point $x \in X_{\overline{K}}$, there exists an infinitesimal flag $\widetilde{Y}_{\bullet}$ over $x$   with $\Delta_{\lambda}^{-1} \times \{\lambda\} \subseteq \widehat{\Delta}_{\widetilde{Y}_{\bullet}}(\overline{D})$.
\end{enumerate}
\end{theorem}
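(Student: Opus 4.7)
The plan is to reduce Theorem \ref{thmampleintro} to the combination of two ingredients that are already in place. The first is Corollary \ref{coroample}, which identifies arithmetic ampleness of a semi-positive $\overline{D}$ with the conjunction of (geometric) ampleness of the underlying divisor $D$ and strict positivity of the infimum of the concave transform $G_{\overline{D},\nu}$ for some choice of $\nu \in \mathcal{V}(X_{\overline{K}})$. The second is the Park--Shin characterization (generalizing Küronya--Lozovanu to arbitrary characteristic) of the ampleness of $D$ in terms of inverted simplices in Okounkov bodies attached to infinitesimal flags, which applies since $X$ is smooth. The bridge between the arithmetic and geometric sides is the tautological observation that, by definition of the arithmetic Okounkov body,
\[
\Delta_\lambda^{-1} \times \{\lambda\} \,\subseteq\, \widehat{\Delta}_{\widetilde{Y}_\bullet}(\overline{D})
\quad\Longleftrightarrow\quad
\Delta_\lambda^{-1} \subseteq \Delta_{\widetilde{Y}_\bullet}(D) \ \text{ and } \ G_{\overline{D},\widetilde{Y}_\bullet}(\alpha) \geq \lambda \ \text{ for all } \alpha \in \Delta_\lambda^{-1},
\]
so each of the conditions (2) and (3) splits cleanly into a geometric part and a Green-function part.

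For (1)$\Rightarrow$(2) I would proceed as follows. Assuming $\overline{D}$ ample, the divisor $D$ is ample, so the Park--Shin theorem provides a constant $\lambda_1 > 0$ such that $\Delta_{\lambda_1}^{-1} \subseteq \Delta_{\widetilde{Y}_\bullet}(D)$ for \emph{every} infinitesimal flag $\widetilde{Y}_\bullet$ on $X_{\overline{K}}$. Arithmetic ampleness combined with the arithmetic Nakai--Moishezon theorem (Theorem \ref{thmNM}) applied to $X$ itself yields $\zeta_{\mathrm{abs}}(\overline{D}) > 0$, while Corollary \ref{corominabsintro} together with Lemma \ref{lemmainfG} (which applies since $D$ is big) gives $\inf_{\alpha} G_{\overline{D},\widetilde{Y}_\bullet}(\alpha) = \zeta_{\mathrm{abs}}(\overline{D}) =: \lambda_2 > 0$ independently of the flag. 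Setting $\lambda := \min(\lambda_1, \lambda_2) > 0$ then produces the required containment. The implication (2)$\Rightarrow$(3) is immediate, since every closed point of $X_{\overline{K}}$ admits at least one infinitesimal flag above it. For (3)$\Rightarrow$(1), projecting the containment onto the first factor shows $\Delta_\lambda^{-1} \subseteq \Delta_{\widetilde{Y}_\bullet}(D)$ for an infinitesimal flag above each closed point, so the converse direction of the Park--Shin theorem gives that $D$ is ample (and in particular big). Evaluating the hypothesis at $0_{\bR^d} \in \Delta_\lambda^{-1}$ yields a family of valuations of maximal rank whose centers cover $X_{\overline{K}}$ and which satisfy $G_{\overline{D},\widetilde{Y}_\bullet}(0_{\bR^d}) \geq \lambda$. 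The generalized form of Corollary \ref{corominabsintro} then gives $\zeta_{\mathrm{abs}}(\overline{D}) \geq \lambda > 0$, and combining this with ampleness of $D$ and the standing semi-positivity hypothesis, Corollary \ref{coroample} delivers arithmetic ampleness of $\overline{D}$.

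Rather than a deep obstacle, the main subtlety is essentially bookkeeping: one needs to check that the origin $0_{\bR^d}$ is indeed the vertex of $\Delta_\lambda^{-1}$ corresponding to the center of the valuation attached to an infinitesimal flag (so that its image under $G_{\overline{D},\widetilde{Y}_\bullet}$ controls the height at that center), and to verify the uniformity of the Park--Shin constant $\lambda_1$ across all infinitesimal flags. Both points are standard consequences of the construction of infinitesimal flags recalled in §\ref{paraginfinitesimal}.
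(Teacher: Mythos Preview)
Your proposal is correct and follows essentially the same route as the paper: split each condition into a geometric part handled by Park--Shin and an arithmetic part handled by Corollary~\ref{corominabsG}/Theorem~\ref{thmkey}, then combine via the Nakai--Moishezon criterion. The only point the paper makes more explicit is the uniformity of $\lambda_1$ in $(1)\Rightarrow(2)$, which it obtains by invoking Seshadri's criterion ($\epsilon(D):=\inf_x \epsilon(D,x)>0$) together with the quantitative Park--Shin statement identifying the largest inverted simplex at $x$ with $\epsilon(D,x)$; your ``standard consequence'' remark is exactly this.
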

This statement is an analogue of \cite[Theorem B]{KL} in the context of Arakelov geometry. We shall also prove a  similar result  concerning admissible flags (Corollary \ref{thmadmample}), that provides an arithmetic analogue to \cite[Corollary 3.2]{KLDoc}.
 In the particular case where the pair $(X,\overline{D})$ is toric and assuming that $\overline{D}$ is semi-positive, one can recover the characterizations of arithmetic ampleness and nefness of \cite[Theorem 2]{BMPS} from Theorems \ref{thmnefintro} and \ref{thmampleintro} (see Remark \ref{rematoric}).
 
\subsection{Application to the arithmetic Hilbert-Samuel theorem}\label{paragapplicationHS}

 Given an adelic $\bR$-Cartier divisor $\overline{D} = (D,(g_v)_{v\in\Sigma_K}) \in \widehat{\Div}(X)_\bR$, we denote by 
 \[\widehat{\Gamma}(X,\overline{D}) = \{s \in \Gamma(X,D) \ | \ \|s\|_{v,\sup} \leq 1 \ \forall \  v \in \Sigma_K\}\] the set of small sections of $\overline{D}$  (see §~\ref{paragvolumes}). We let $\widehat{h}^0(X, \overline{D}) = \ln \# \widehat{\Gamma}(X, \overline{D})$ if $K$ is a number field, and $\widehat{h}^0(X, \overline{D}) = \dim_k \widehat{\Gamma}(X, \overline{D})$ if $K$ is the function field of a regular projective curve over a field $k$.
  The arithmetic Hilbert-Samuel theorem asserts  that if $\overline{D}$ is nef, then  for any $\overline{N} \in \widehat{\Div}(X)_\bR$ we have
\begin{equation}\label{eqHS}
\widehat{h}^0(X,n\overline{D} + \overline{N})  = \frac{h_{\overline{D}}(X)}{(d+1)!}n^{d+1} + o(n^{d+1})
\end{equation}
 when $n \gg 1$, where $h_{\overline{D}}(X)$ denotes the height of $X$ with respect to $\overline{D}$ (see §~\ref{sectionheightsemipos}). The identity \eqref{eqHS} can be reformulated as $\widehat{\vol}(\overline{D}) = h_{\overline{D}}(X)$, where $\widehat{\vol}(\overline{D})$ is the arithmetic volume of $\overline{D}$ (see §~\ref{paragvolumes} and Remark \ref{remacoroHS}). When $K$ is a number field, the arithmetic Hilbert-Samuel theorem was originally proved by Gillet and Soulé as a consequence of the arithmetic Riemann-Roch theorem,  under the hypothesis that $\overline{D}$ is an ample adelic Cartier divisor and that the metrics at the non-Archimedean places are given by a global projective model over the ring of integers of $K$ (see \cite[Theorems 8 and 9]{GS92}). In the level of generality considered above, it was proved by Moriwaki  by using the continuity of arithmetic volumes (see \cite[Theorem A]{MoriwakiContVol}, \cite[Theorem 5.3.2]{MoriwakiMAMS} and Theorem \ref{thmHodge} \emph{infra}). As a consequence of Theorem \ref{thmkeyintro}, we shall establish a converse to the arithmetic Hilbert-Samuel theorem, proving that \eqref{eqHS} is actually a criterion for nefness when $\overline{D}$ is semi-positive and $D$ is big.
\begin{theorem}[Corollary \ref{coronefHodge}]\label{coroHSintro}  Let $\overline{D}= (D,(g_v)_{v \in \Sigma_K}) \in \widehat{\Div}(X)_\bR$ be semi-positive. If $D$ is big, the following conditions are equivalent:
\begin{enumerate}[leftmargin=0.8cm]
\item  $\overline{D}$ is nef;
\item $\widehat{\vol}(\overline{D}) = h_{\overline{D}}(X)$;
\item for any $\overline{N} \in \widehat{\Div}(X)_\bR$, we have
\[\widehat{h}^0(X,n\overline{D} + \overline{N}) = \frac{h_{\overline{D}}(X)}{(d+1)!}n^{d+1} + o(n^{d+1}).\]
\end{enumerate}
\end{theorem}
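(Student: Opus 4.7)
The three implications are best handled separately, with the nontrivial direction being (2) $\Rightarrow$ (1).

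For (1) $\Rightarrow$ (3) I would directly invoke Moriwaki's arithmetic Hilbert-Samuel theorem (\cite[Theorem 5.3.2]{MoriwakiMAMS}, reformulated later as Theorem \ref{thmHodge}), which is precisely the identity \eqref{eqHS} when $\overline{D}$ is nef. The implication (3) $\Rightarrow$ (2) is an immediate specialization: taking $\overline{N}=0$ in (3), dividing by $n^{d+1}/(d+1)!$ and passing to the limit, the left-hand side converges to $\widehat{\vol}(\overline{D})$ by definition (see §~\ref{paragvolumes}), yielding $\widehat{\vol}(\overline{D}) = h_{\overline{D}}(X)$.

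The crucial implication is (2) $\Rightarrow$ (1), and here the plan is to use the two integral representations of the arithmetic volume and of the height via Boucksom-Chen's concave transform. I fix a valuation $\nu \in \mathcal{V}(X_{\overline{K}})$ of maximal rank. Since $D$ is big, the volume formula of Yuan and of Boucksom-Chen (\cite[Theorem A]{YuanOkounkov}, \cite[Theorem 2.8]{BC}) reads
\[\widehat{\vol}(\overline{D})\ =\ (d+1)!\int_{\Delta_\nu(D)} \max(G_{\overline{D},\nu}(\alpha),0)\, d\alpha,\]
while the height formula of \cite[Theorem 3.7]{BC}, applicable since $\overline{D}$ is semi-positive and $D$ is big, gives
\[h_{\overline{D}}(X)\ =\ (d+1)!\int_{\Delta_\nu(D)}G_{\overline{D},\nu}(\alpha)\,d\alpha.\]
Subtracting these identities under the hypothesis (2) produces $\int_{\Delta_\nu(D)}\max(-G_{\overline{D},\nu},0)\,d\alpha = 0$, so that $G_{\overline{D},\nu}\geq 0$ almost everywhere on $\Delta_\nu(D)$.

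To conclude, I upgrade this a.e. statement to a pointwise one: since $G_{\overline{D},\nu}$ is concave and continuous on the interior of $\Delta_\nu(D)$, the a.e. bound forces $G_{\overline{D},\nu}\geq 0$ throughout the interior, and the boundary is controlled by upper semi-continuity of the Boucksom-Chen transform (or, alternatively, by exhausting a boundary point by interior points and using concavity). Corollary \ref{corominabsintro} then gives
\[\zeta_{\mathrm{abs}}(\overline{D}) \ = \ \inf_{\alpha\in\Delta_\nu(D)}G_{\overline{D},\nu}(\alpha)\ \geq\ 0,\]
so $\overline{D}$ is nef. The step I expect to require the most care is this last one: passing from an a.e. lower bound on the concave transform to a genuine lower bound for $\inf G_{\overline{D},\nu}$ on the whole closed body $\Delta_\nu(D)$. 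Everything else is a straightforward assembly of Moriwaki's theorem, the definition of the arithmetic volume, the integral formulas of Yuan and Boucksom-Chen, and Corollary \ref{corominabsintro}.
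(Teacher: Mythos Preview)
Your approach is the paper's own: both routes feed hypothesis (2) into the Boucksom--Chen integral formulae to force $G_{\overline{D},\nu}\geq 0$ almost everywhere, upgrade this to a pointwise bound via upper semi-continuity, and then conclude nefness through Corollary~\ref{corominabsG} (equivalently Corollary~\ref{coronef}).

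One point deserves care. You invoke the \emph{equality} $h_{\overline{D}}(X)=(d+1)!\int_{\Delta_\nu(D)}G_{\overline{D},\nu}\,d\mu$ under semi-positivity alone, citing \cite[Theorem~3.7]{BC}. The paper's Theorem~\ref{thmBC} only records, for semi-positive $\overline{D}$, the \emph{inequality}
\[
h_{\overline{D}}(X)\ \leq\ (d+1)!\int_{\Delta_\nu(D)}\max\{t,G_{\overline{D},\nu}\}\,d\mu
\]
for every $t\in\bR$, with equality when $\zeta_{\mathrm{abs}}(\overline{D})\geq t$; turning this into your unconditional equality already needs $\zeta_{\mathrm{abs}}(\overline{D})=\inf G_{\overline{D},\nu}$, which is Corollary~\ref{corominabsG} and hence potentially circular. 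This is easy to repair: your subtraction only needs the inequality $h_{\overline{D}}(X)\leq(d+1)!\int G_{\overline{D},\nu}$ (let $t\to-\infty$ above), since together with the trivial bound $\int\max(G,0)\geq\int G$ it still forces $\int\max(-G,0)=0$. The paper does exactly this, phrased as comparing $\int\max\{0,G\}$ with $\int\max\{t,G\}$ for $t<0$.
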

To our knowledge, this result was only known under the additional assumption that $d= 1$ \cite[Theorem 7.3.1]{MoriwakiMAMS} or that $(X,\overline{D})$ is toric \cite[Corollary 6.2]{BMPS} up to now. Theorem \ref{coroHSintro} is a direct consequence of Corollary \ref{corominabsintro} thanks to fundamental formulae due to Boucksom and Chen relating arithmetic volumes to the concave transform (see Theorem \ref{thmBC}).

\subsection{Application to generic nets of small points and subvarieties}\label{paragapplicationgeneric}  Let $\overline{D}= (D,(g_v)_{v \in \Sigma_K}) \in \widehat{\Div}(X)_\bR$ be a semi-positive $\bR$-Cartier divisor on $X$ such that $D$ is big. By a theorem of Zhang \cite[Theorem 5.2]{Zhangplav} (see Theorem \ref{thmZhang} \textit{infra}), we have 
\begin{equation}\label{ineqsmallnet}
\lim_m \widehat{h}_{\overline{D}}(p_m) \geq \widehat{h}_{\overline{D}}(X) := \frac{h_{\overline{D}}(X)}{(d+1)\deg_D(X)}
\end{equation}
for any  generic net of points $(p_m)_{m}$  in $X_{\overline{K}}$ (see Definition \ref{defigeneric}). In their seminal article \cite{SUZ}, Spziro, Ullmo and Zhang pioneered the study of the limit distribution of Galois orbits of a generic net of points for which equality occurs in \eqref{ineqsmallnet}. Their work has proven to have remarkable applications towards the Bogolomov conjecture, and has been widely generalized by many authors. We refer the reader to \cite[Introduction]{BPRS} for a more detailed introduction to this topic.  These results lead naturally to the following question: under which conditions does there exist a generic net of points $(p_m)_{m}$  in $X_{\overline{K}}$ such that equality holds in \eqref{ineqsmallnet}? In \cite[Theorem 1.5]{BaEM}, it was shown that such a net exists if and only if $G_{\overline{D}, \nu}$ is constant for any valuation of maximal rank $\nu \in \mathcal{V}(X_{\overline{K}})$. Combining this result with Corollary \ref{corominabsintro}, we shall prove the following theorem.

\begin{theorem}[Theorem \ref{thmgeneric}]\label{thmgenericintro}
 Let $\overline{D} = (D,(g_v)_{v\in \Sigma_K}) \in \widehat{\Div}(X)_\bR$ be semi-positive, with $D$ big. The following conditions are equivalent:
 \begin{enumerate}[leftmargin=0.8cm]
 \item there exists a generic net of points $(p_m)_{m}$ in $X_{\overline{K}}$ such that 
 \[\lim_m \widehat{h}_{\overline{D}}(p_m) = \widehat{h}_{\overline{D}}(X);\]
 \item there exists a generic net of subvarieties $(Y_m)_{m}$ in $X_{\overline{K}}$ such that 
 \[\lim_m \widehat{h}_{\overline{D}}(Y_m) = \zeta_{\mathrm{abs}}(\overline{D});\]
 \item we have $\widehat{h}_{\overline{D}}(x) \geq \widehat{h}_{\overline{D}}(X)$ for every closed point $x \in X_{\overline{K}}$;
 \item we have $\widehat{h}_{\overline{D}}(Y) \geq \widehat{h}_{\overline{D}}(X)$ for every subvariety $Y \subseteq X_{\overline{K}}$ with $\deg_D(Y)>0$.

 \end{enumerate}
\end{theorem}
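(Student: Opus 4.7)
The plan is to organize all four conditions around the single identity $\zeta_{\mathrm{abs}}(\overline{D}) = \widehat{h}_{\overline{D}}(X)$, which each of them will turn out to encode.

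The backbone step is the following observation. Fix any valuation $\nu \in \mathcal{V}(X_{\overline{K}})$. Since $D$ is big, the Boucksom--Chen formula identifies $\widehat{h}_{\overline{D}}(X)$ as the average value of the concave transform $G_{\overline{D},\nu}$ on $\Delta_\nu(D)$, while Corollary~\ref{corominabsintro} identifies $\zeta_{\mathrm{abs}}(\overline{D})$ with $\inf_\alpha G_{\overline{D},\nu}(\alpha)$. Since the average of a function is always at least its infimum, with equality (when the function is concave, hence continuous on the relative interior) if and only if the function is constant on its effective domain, one gets the unconditional inequality $\zeta_{\mathrm{abs}}(\overline{D}) \leq \widehat{h}_{\overline{D}}(X)$, with equality if and only if $G_{\overline{D},\nu}$ is constant on $\Delta_\nu(D)$. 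A mild technical point, handled by translating $\overline{D}$ by a real constant via one Green function, is that some integral formulas of Boucksom--Chen and Zhang's theorem are most naturally stated in the nef case; all the quantities $\zeta_{\mathrm{abs}}$, $\widehat{h}_{\overline{D}}(X)$, $\widehat{h}_{\overline{D}}(Y)$ and $G_{\overline{D},\nu}$ shift uniformly, so this reduction is harmless.

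With the backbone in hand, several implications are immediate. Condition (3) reads $\widehat{h}_{\overline{D}}(x) \geq \widehat{h}_{\overline{D}}(X)$ for every closed point $x$, which by taking the infimum means $\zeta_{\mathrm{abs}}(\overline{D}) \geq \widehat{h}_{\overline{D}}(X)$; combined with the backbone, this is exactly $\zeta_{\mathrm{abs}}(\overline{D}) = \widehat{h}_{\overline{D}}(X)$, i.e.\ the constancy of $G_{\overline{D},\nu}$, which by \cite[Theorem~1.5]{BaEM} is equivalent to (1). Taking $Y = \overline{\{x\}}$, which has $\deg_D(Y) > 0$, shows (4) $\Rightarrow$ (3); conversely, a generic net of points is in particular a generic net of zero-dimensional subvarieties, with $\widehat{h}_{\overline{D}}(p_m) = \widehat{h}_{\overline{D}}(\overline{\{p_m\}})$ and limit $\widehat{h}_{\overline{D}}(X) = \zeta_{\mathrm{abs}}(\overline{D})$ by the backbone, giving (1) $\Rightarrow$ (2).

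For the implication (3) $\Rightarrow$ (4), I would apply the backbone to the restricted pair $(Y, \overline{D}|_Y)$. Since $D$ is nef and $\deg_D(Y) > 0$, the divisor $D|_Y$ is nef and big on $Y$ (the top self-intersection of $D|_Y$ equals $\deg_D(Y) = \vol(D|_Y)$), and semi-positivity is inherited, so Corollary~\ref{corominabsintro} applies and yields $\widehat{h}_{\overline{D}}(Y) \geq \zeta_{\mathrm{abs}}(\overline{D}|_Y)$. Because points of $Y(\overline{K})$ are points of $X(\overline{K})$, one has $\zeta_{\mathrm{abs}}(\overline{D}|_Y) \geq \zeta_{\mathrm{abs}}(\overline{D})$, and combining with (3) gives $\widehat{h}_{\overline{D}}(Y) \geq \widehat{h}_{\overline{D}}(X)$.

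The remaining implication (2) $\Rightarrow$ (3) follows from the extension of Zhang's inequality \eqref{ineqsmallnet} to generic nets of subvarieties (\cite[Theorem~5.2]{Zhangplav}), which yields $\liminf_m \widehat{h}_{\overline{D}}(Y_m) \geq \widehat{h}_{\overline{D}}(X)$ and thus forces $\zeta_{\mathrm{abs}}(\overline{D}) \geq \widehat{h}_{\overline{D}}(X)$, hence equality. The main conceptual obstacle is really the backbone step, which packages together the two delicate integral formulas from \cite{BC} and Corollary~\ref{corominabsintro} into the single clean dichotomy $\zeta_{\mathrm{abs}}(\overline{D}) = \widehat{h}_{\overline{D}}(X)$ versus strict inequality; once this is established, the equivalences close into the short cycle $(1) \Leftrightarrow (3) \Leftrightarrow (4)$ and $(1) \Rightarrow (2) \Rightarrow (3)$ outlined above.
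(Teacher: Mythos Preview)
Your overall plan---reduce everything to the identity $\zeta_{\mathrm{abs}}(\overline{D}) = \widehat{h}_{\overline{D}}(X)$ via the concave transform---matches the paper's approach, and the implications $(1)\Leftrightarrow(3)$, $(4)\Rightarrow(3)$, $(1)\Rightarrow(2)$, $(3)\Rightarrow(4)$ are handled correctly (for the last one, note the paper simply invokes Corollary~\ref{lemmaminabsinfsubvar}, which already gives $\widehat{h}_{\overline{D}}(Y)\geq \zeta_{\mathrm{abs}}(\overline{D})$ without any normality fuss on $Y$; your restriction argument works too but needs that comment about passing to the normalization).

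The genuine gap is your step $(2)\Rightarrow(3)$. You invoke ``the extension of Zhang's inequality \eqref{ineqsmallnet} to generic nets of subvarieties (\cite[Theorem~5.2]{Zhangplav})'', asserting $\liminf_m \widehat{h}_{\overline{D}}(Y_m) \geq \widehat{h}_{\overline{D}}(X)$ for \emph{any} generic net of subvarieties. But \cite[Theorem~5.2]{Zhangplav} is Zhang's theorem on successive minima, which is purely about \emph{points}; it contains no statement about higher-dimensional subvarieties, and in the present generality (semi-positive, $D$ big) such a statement is not available by citation. The paper does \emph{not} prove this general inequality either. Instead, it proves $(2)\Rightarrow(3)$ directly: by \cite[Theorem~1.2]{BaEM} one finds a section $s \in \Gamma(X,nD)^\times$ with $h_{n\overline{D}}(s)\leq -n(\zeta_{\mathrm{ess}}(\overline{D})-\varepsilon)$; choosing $m_0$ so that $Y_{m_0}$ is not contained in $\divi(s)$ and $\widehat{h}_{\overline{D}}(Y_{m_0}) \leq \zeta_{\mathrm{abs}}(\overline{D})+\varepsilon$, the B\'ezout formula gives
\[
(r+1)\,\widehat{h}_{\overline{D}}(Y_{m_0}) - r\,\widehat{h}_{\overline{D}}(\divi(s)\cdot Y_{m_0}) \;\geq\; \widehat{h}_{\overline{D}}(X)-\varepsilon,
\]
and combining with $\widehat{h}_{\overline{D}}(\divi(s)\cdot Y_{m_0}) \geq \zeta_{\mathrm{abs}}(\overline{D})$ (Corollary~\ref{lemmaminabsinfsubvar}) \emph{and} the smallness hypothesis on $\widehat{h}_{\overline{D}}(Y_{m_0})$ yields $\zeta_{\mathrm{abs}}(\overline{D}) \geq \widehat{h}_{\overline{D}}(X)-(d+2)\varepsilon$. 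Note that the upper bound on $\widehat{h}_{\overline{D}}(Y_{m_0})$ coming from hypothesis~(2) is genuinely used here; without it the B\'ezout inequality only gives $\widehat{h}_{\overline{D}}(Y_{m_0}) \geq \tfrac{1}{r+1}\bigl(\widehat{h}_{\overline{D}}(X)+r\,\zeta_{\mathrm{abs}}(\overline{D})\bigr)-\varepsilon$, which is weaker than what you claimed. You should replace your citation by this explicit argument.
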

 
 It is worth to note that this statement shows in particular that Yuan's theorems on equidistribution of Galois orbits of small points and subvarieties \cite[Theorem 3.1 and Theorem 3.6]{Yuan} actually share the same assumption. We recall them below in a single concise statement.
 \begin{theorem}[Yuan] Assume that $K$ is a number field. Let $\overline{D}=(D,(g_v)_{v\in\Sigma_K})\in \widehat{\Div}(X)$ be a semi-positive adelic Cartier divisor on $X$ with $D$ ample. For any generic net of subvarieties $(Y_m)_{m}$ in $X_{\overline{K}}$ such that 
 \[\lim_m \widehat{h}_{\overline{D}}(Y_m) = \zeta_{\mathrm{abs}}(\overline{D})\]
 and for any place $v$ of $K$, the Galois orbits of $(Y_m)_{m}$ are equidistributed in the analytic space $X_{\bC_v}^{\mathrm{an}}$ with respect to the measure $c_{1,v}(\overline{D})^{d}/\deg_D(X)$.
 \end{theorem}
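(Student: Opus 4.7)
The statement combines Yuan's equidistribution theorems for small points and for small subvarieties, so I would proceed via Yuan's variational principle in the form adapted to subvarieties. Fix a place $v$ of $K$ and, for each $m$, denote by $\mu_{Y_m}$ the average over the Galois orbit of $Y_m$ of the Chambert-Loir measures $c_{1,v}(\overline{D}|_{Y_m})^{\dim Y_m}/\deg_D(Y_m)$ on $X_{\bC_v}^{\mathrm{an}}$, and let $\mu_X = c_{1,v}(\overline{D})^d/\deg_D(X)$. Equidistribution is the weak-$*$ convergence $\mu_{Y_m}\to \mu_X$, and by an approximation argument (Stone-Weierstrass at Archimedean places; Gubler's density of model/smooth functions among continuous ones at non-Archimedean places), it suffices to test against smooth $f\colon X_{\bC_v}^{\mathrm{an}}\to \bR$.

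For each such $f$, consider the perturbed adelic $\bR$-Cartier divisor $\overline{D}(\epsilon f)$ obtained by replacing $g_v$ by $g_v + \epsilon f$ and leaving the other Green functions unchanged. For small $|\epsilon|>0$, $\overline{D}(\epsilon f)$ decomposes as a difference of two semi-positive adelic $\bR$-Cartier divisors with ample underlying divisor. By hypothesis, $\lim_m \widehat{h}_{\overline{D}}(Y_m) = \zeta_{\mathrm{abs}}(\overline{D})$, and Theorem \ref{thmgenericintro} then forces $\zeta_{\mathrm{abs}}(\overline{D}) = \widehat{h}_{\overline{D}}(X)$. Applying Zhang's inequality \eqref{ineqsmallnet} to $\overline{D}(\epsilon f)$ and the generic net $(Y_m)$ yields
\[\liminf_m \widehat{h}_{\overline{D}(\epsilon f)}(Y_m) \geq \widehat{h}_{\overline{D}(\epsilon f)}(X).\]

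Expanding to first order in $\epsilon$, via the multilinearity of arithmetic intersection numbers in Green functions, gives
\[\widehat{h}_{\overline{D}(\epsilon f)}(Y_m) = \widehat{h}_{\overline{D}}(Y_m) + \epsilon \int_{X_{\bC_v}^{\mathrm{an}}} f\, d\mu_{Y_m} + O(\epsilon^2),\]
\[\widehat{h}_{\overline{D}(\epsilon f)}(X) = \widehat{h}_{\overline{D}}(X) + \epsilon \int_{X_{\bC_v}^{\mathrm{an}}} f\, d\mu_X + O(\epsilon^2).\]
The zeroth-order terms cancel in the $\liminf$ thanks to $\lim_m \widehat{h}_{\overline{D}}(Y_m) = \widehat{h}_{\overline{D}}(X)$ established above. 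Dividing by $\epsilon>0$ and letting $\epsilon\to 0^+$ yields $\liminf_m \int f\,d\mu_{Y_m} \geq \int f\,d\mu_X$. Applying the same argument to $-f$ gives the reverse inequality, hence weak-$*$ convergence for smooth $f$, and, by density, for all continuous $f$.

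The main obstacle is the rigorous justification of the first-order Taylor expansions at the level of subvarieties in Moriwaki's adelic $\bR$-Cartier framework: when $\overline{D}|_{Y_m}$ is only semi-positive (or merely integrable), the Chambert-Loir measure is defined via a limit procedure, and controlling the $O(\epsilon^2)$ error uniformly as $m$ varies requires combining the continuity of arithmetic volumes with the arithmetic Hilbert-Samuel theorem applied to $Y_m$. The cleanest path is to decompose $\overline{D}(\epsilon f)$ as a difference of two semi-positive perturbations and invoke the multilinear expansion of $(\overline{D}_0 \cdots \overline{D}_{k+1})$ in Green functions, as developed by Zhang and refined by Chambert-Loir and Thuillier in the non-Archimedean setting; a secondary issue is the density step at non-Archimedean places, which relies on approximability of continuous functions by smooth forms on the Berkovich space $X_{\bC_v}^{\mathrm{an}}$.
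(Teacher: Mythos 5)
First, note that the paper does not prove this statement at all: it is quoted verbatim from Yuan's article \cite{Yuan} (Theorems 3.1 and 3.6 there), and the surrounding text only observes that Theorem \ref{thmgenericintro} shows the hypotheses of Yuan's two equidistribution theorems coincide. So there is no internal proof to compare against; what can be assessed is whether your variational sketch would actually reconstruct Yuan's theorem.

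It would not, because of one decisive gap: you apply Zhang's inequality \eqref{ineqsmallnet} (Theorem \ref{thmZhang}) to the perturbed divisor $\overline{D}(\epsilon f)$, but that inequality requires $\overline{D}(\epsilon f)$ to be semi-positive, and for a general smooth $f$ the Green function $g_v+\epsilon f$ is not (a uniform limit of) plurisubharmonic, respectively model, Green functions for any $\epsilon\neq 0$. Writing $\overline{D}(\epsilon f)$ as a difference of two semi-positive divisors makes the intersection numbers and Chambert--Loir measures well defined, but it does \emph{not} restore the lower bound $\liminf_m \widehat{h}_{\overline{D}(\epsilon f)}(Y_m)\geq \widehat{h}_{\overline{D}(\epsilon f)}(X)$; this is exactly the obstruction that confined the Szpiro--Ullmo--Zhang method \cite{SUZ} to perturbations preserving semi-positivity. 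The entire content of Yuan's theorem is the substitute for this step: his arithmetic bigness theorem (an arithmetic Siu inequality) gives $\widehat{\vol}(\overline{A}-\overline{B})\geq \overline{A}^{\,d+1}-(d+1)\,\overline{A}^{\,d}\cdot\overline{B}$ for nef $\overline{A},\overline{B}$, which combined with the inequality ``essential minimum $\geq \widehat{\vol}/((d+1)\deg_D(X))$'' yields $\liminf_m\widehat{h}_{\overline{D}(\epsilon f)}(p_m)\geq \widehat{h}_{\overline{D}(\epsilon f)}(X)+O(\epsilon^2)$, which is all the variational argument needs. Your sketch never introduces this ingredient, so the inequality you divide by $\epsilon$ is unjustified precisely where the theorem is nontrivial. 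A secondary omission: for higher-dimensional $Y_m$ the lower bound for $\liminf_m\widehat{h}_{\overline{D}(\epsilon f)}(Y_m)$ does not follow formally from the point case and requires Yuan's separate argument for subvarieties; your proposal treats points and subvarieties interchangeably. By contrast, the first-order expansions and the $\zeta_{\mathrm{abs}}(\overline{D})=\widehat{h}_{\overline{D}}(X)$ reduction via Theorem \ref{thmgenericintro} are fine.
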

 
In the above statement, the notion of equidistribution of Galois orbits is to be understood in the sense of \cite[§~3]{Yuan}.
 
\subsection{Ideas of proof}\label{paragideaproof} Let $\overline{D} \in \widehat{\Div}(X)_\bR$ be an adelic $\bR$-Cartier divisor. 
 The main new ingredient in our proof of Theorem \ref{thmkeyintro} is the following result controlling the height of a point $x$ in terms of the order of vanishing $\ord_x s$ at $x$ of small section $s \in \widehat{\Gamma}(X,\overline{D})$. It can be interpreted as an adelic version of  Cauchy's inequality for the global sections of $\overline{D}$.
\begin{prop}[Proposition \ref{propCauchy}]\label{propCauchyintro} Assume that $D \in \Div(X)$ is an ample Cartier divisor. Let $x \in X_{\overline{K}}$ be a regular closed point and $\varepsilon > 0$ a real number. There exists $\rho(\overline{D},x,\varepsilon) \in \bR$ such that
\begin{equation}
 \widehat{h}_{\overline{D}}(x) + \varepsilon \geq - \rho(\overline{D},x,\varepsilon)\frac{\ord_x s}{m}
\end{equation}
 for any positive integer $m$ and for any non-zero small section $s \in \widehat{\Gamma}(X, m\overline{D})$.
\end{prop}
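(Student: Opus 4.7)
The strategy is to blow up $X_{\overline{K}}$ at $x$ and apply an adelic form of Cauchy's inequality to convert the vanishing of $s$ at $x$ into a lower bound for $\widehat{h}_{\overline{D}}(x)$. Let $\pi \colon \widetilde{X} \to X_{\overline{K}}$ denote the blowup at the regular closed point $x$, with exceptional divisor $E \cong \bP^{d-1}_{\overline{K}}$. I would first equip $\cO_{\widetilde{X}}(E)$ with a continuous adelic metric, for instance one arising from a global integral model of $\widetilde{X}$, thereby viewing $\overline{E}$ as an element of $\widehat{\Div}(\widetilde{X})_\bR$. Assuming $k := \ord_x s \geq 1$ (the case $k = 0$ reduces to the standard inequality $\widehat{h}_{\overline{D}}(x) \geq 0$ for points outside the vanishing locus of a small section), the pullback $\pi^* s$ vanishes to order exactly $k$ along $E$, so the rational section $\tilde s := \pi^* s \cdot s_E^{-k}$, where $s_E$ is the canonical section of $\cO(E)$, extends to a regular global section of the line bundle underlying $m\overline{A}$, with $\overline{A} := \pi^* \overline{D} - (k/m)\overline{E}$. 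Moreover, $\tilde s|_E$ records the nonzero $k$-th order jet of $s$ at $x$ and defines a nonzero section of $\cO_E(k)$, so that $\tilde s(y) \neq 0$ on a Zariski-dense open subset of $E_{\overline{K}}$.

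The analytical heart of the argument is an adelic Cauchy estimate on the sup-norms of $\tilde s$. At each place $v$ of $K$, I choose local analytic coordinates on $\widetilde{X}^{\mathrm{an}}_{\bC_v}$ transverse to $E$ and expand $\pi^* s$ in these coordinates as a convergent series whose first nonzero homogeneous component has degree $k$. Cauchy's inequality in its archimedean form at infinite places, together with its ultrametric analogue at non-archimedean places and the assumption $\|s\|_{v,\sup} \leq 1$, yields an upper bound
\[
\|\tilde s\|_{m\overline{A},\, v,\, \sup} \leq C_v^{\,k}
\]
for constants $C_v$ depending only on $\overline{D}$, $\overline{E}$, $x$ and the radii of the chosen charts. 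After approximating the metric on $\overline{E}$ by a sufficiently nice model metric (introducing an error controlled by $\varepsilon$), all but finitely many $C_v$ equal one, so that $C_0 := \sum_v \lambda_v \log C_v$, with $\lambda_v$ the usual adelic weights, is a finite real number depending on $\overline{D}$, $x$ and $\varepsilon$.

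The proof concludes by the standard inequality relating sup-norms of sections to heights: since $\tilde s$ is a nonzero global section of $m\overline{A}$, one has $m\,\widehat{h}_{\overline{A}}(y) \geq -\sum_v \lambda_v \log \|\tilde s\|_{m\overline{A},v,\sup} \geq -k C_0$ at any closed point $y \in \widetilde{X}_{\overline{K}}$ with $\tilde s(y) \neq 0$. Choosing such a $y$ in the open subset of $E_{\overline{K}}$ where $\tilde s|_E$ does not vanish, the identity
\[
\widehat{h}_{\overline{A}}(y) = \widehat{h}_{\overline{D}}(x) - \frac{k}{m}\,\widehat{h}_{\overline{E}}(y)
\]
(valid because $\pi(y) = x$ and by linearity of heights) combined with the finiteness of the absolute minimum $\zeta_{\mathrm{abs}}(\overline{E}|_E) > -\infty$ yields the required inequality with $\rho(\overline{D},x,\varepsilon)$ encoding $C_0$, $\zeta_{\mathrm{abs}}(\overline{E}|_E)$, and the $\varepsilon$-slack needed to pass between metrics. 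I expect the main obstacle to be the adelic Cauchy estimate: establishing it uniformly in $s$ and compatibly across archimedean and non-archimedean places---the latter demanding that local charts be chosen in the Berkovich analytic setting---and verifying that the metric on $\overline{E}$ can be selected so that $C_v = 1$ for all but finitely many $v$ while keeping $\pi^*\overline{D} - (k/m)\overline{E}$ well-behaved, is the technical core of the argument.
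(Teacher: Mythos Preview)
Your blowup strategy is a genuinely different route from the paper's, which instead works directly on $X$: it fixes an \'etale projection $q\colon X\to\bP^d_K$ (via Bertini and a very ample multiple of $D$), uses a multivariable Hensel lemma to produce analytic sections $\sigma_v$ of $q_v$ on polydiscs $\mathbb{D}_v(\rho_v)$ with $\rho_v=1$ for almost all $v$, and then applies Cauchy's inequality to the scalar $\bm{\partial}^{\bm\alpha}_{\mathbf z}f(x)$ for a single nonzero partial derivative of order $k=\ord_x s$. The product formula applied to that nonzero scalar yields $\widehat h_{\overline D}(x)$ directly, with the $\varepsilon$ arising from shrinking the $\rho_v$ at finitely many bad places so that the local trivialization error $\alpha_v(\rho_v)^m$ is bounded by $e^{m\varepsilon}$.

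Your outline has a concrete gap at the very end. You invoke ``the finiteness of the absolute minimum $\zeta_{\mathrm{abs}}(\overline{E}|_E)>-\infty$'' to bound $\widehat h_{\overline E}(y)$ from below uniformly in $y\in E$. But $\cO_{\widetilde X}(E)|_E\cong\cO_{\bP^{d-1}}(-1)$ is anti-ample, so $\widehat h_{\overline E}|_E$ is, up to a bounded function, the \emph{negative} of a Weil height on $\bP^{d-1}$; hence $\zeta_{\mathrm{abs}}(\overline E|_E)=-\infty$. Since the point $y$ where $\tilde s|_E$ does not vanish depends on $s$, you cannot close the argument this way. A repair is available: the locus $\{\tilde s|_E=0\}$ is a degree-$k$ hypersurface, and points of height $0$ (e.g.\ with all coordinates roots of unity, or constant points in the function-field case) are Zariski-dense in $E(\overline K)$, so one of them lies outside this hypersurface and gives a uniform lower bound for $\widehat h_{\overline E}(y)$. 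This is essentially what the paper's choice of a single nonzero coefficient $\bm\partial^{\bm\alpha}_{\mathbf z}f(x)\in K^\times$ accomplishes implicitly and more cheaply.

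A secondary imprecision: your Cauchy estimate $\|\tilde s\|_{m\overline A,v,\sup}\le C_v^{k}$ omits the factor coming from the variation of the metric on $\overline D$ near $x$. In the paper's language this is $\alpha_v(\rho)^m$, and it is this $m$-dependent term (not an approximation of the metric on $\overline E$) that forces the $\varepsilon$ in the statement: one shrinks the chart radii at the bad places to make $\alpha_v$ close to $1$, at the cost of enlarging $\rho(\overline D,x,\varepsilon)$.
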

In order to sketch the proof of this proposition, we assume that $x \in X(K)$ is a $K$-rational point for simplicity. Given a small section  $s \in \widehat{\Gamma}(X, m\overline{D})$, we construct a local section $\bm{\partial}s \in \cO_X(mD)$ not vanishing at $x$ by applying to $s$ a differential operator of order $\ord_x s$. For every place $v \in \Sigma_K$, we apply Cauchy's inequality in severable variables on a suitable polydisc of radius $\rho_{v} > 0$ in $X_{\bC_v}$ to prove an inequality of the form  
 \begin{equation}\label{ineqCauchyintro}
 \|\bm{\partial}s(x)\|_v \leq \alpha_v(\rho_v)^{m}\rho_v^{-\ord_x s}\|s\|_{v, \sup} \leq \alpha_v(\rho_v)^{m}\rho_v^{-\ord_x s},
 \end{equation}
 where $\alpha_v(\rho_v)$ is an error term controlling the behaviour of a trivialization of $\cO_X(D)$ on the polydisc.  Using that everything is defined over $K$, we show that there exists a finite set $\mathcal{P}$ of places (depending only on $\overline{D}$ and $x$) such that for every $v \in \Sigma_K \setminus \mathcal{P}$, we can choose $\rho_v = 1$ and ensure that $ \alpha_v(\rho_v)= 1$ (see Claims \ref{claimtriv} and \ref{claimsection}). By definition, we have 
\[ m\widehat{h}_{\overline{D}}(x) = -\sum_{v\in \Sigma_K} n_v(K) \ln\|\bm{\partial}s(x)\|_v\]
where $n_v(K)$ is the local weight of $K$ at $v$ (see §~\ref{paragweights}). Therefore summing up the inequalities \eqref{ineqCauchyintro} gives the lower bound
 \[ \widehat{h}_{\overline{D}}(x) + \sum_{v \in \mathcal{P}} n_v(K)\ln (\alpha_v(\rho_v)) \geq  - \frac{\ord_xs}{m} \times \sum_{v \in \mathcal{P}} n_v(K) \ln (1/\rho_v).\]
Proposition \ref{propCauchyintro} follows by choosing $\rho_v$ sufficiently small for every $v \in \mathcal{P}$, and by using the fact that  $\lim_{\rho_v \rightarrow 0} \ln(\alpha_v(\rho_v))=0$ (which is a direct consequence of  the continuity of the metrics).

  We now briefly explain how Proposition \ref{propCauchyintro} implies Theorem \ref{thmkeyintro} in the case where $X$ is smooth and $D \in \Div(X)$ is an ample Cartier divisor.  The general case follows by using de Jong's alteration theorem and continuity arguments (see Lemma \ref{lemmaConcalt}). Let $x \in X_{\overline{K}}$ be a closed point and let $\nu \in \mathcal{V}(X_{\overline{K}})$ be a valuation of maximal rank centered at $x$.  The point is to observe that if $G_{\overline{D},\nu}(0_{\bR^d}) > 0$, then  for any $\sigma > 0$, there exist an integer $m > 0$ and a small section $s \in \widehat{\Gamma}(X,m\overline{D})$ such that $\ord_x(s) < m\sigma$ (see Claim \ref{claimexists0}).  
    Applying Proposition  \ref{propCauchyintro} and letting $\sigma$ tend to zero shows that $\widehat{h}_{\overline{D}}(x) + \varepsilon \geq 0$ for every $\varepsilon > 0$. This gives the implication
    \[G_{\overline{D},\nu}(0_{\bR^d}) > 0 \Longrightarrow \widehat{h}_{\overline{D}}(x) > 0,\]
 and the inequality of Theorem \ref{thmkeyintro} follows by rescaling metrics.

 \subsection{Organization of the paper} 
  In section \ref{sectionadelicdiv} we recall the definition and basic properties of adelic $\bR$-Cartier divisors. 
   Section \ref{sectionCauchy} contains the main new technical ingredient of this paper, namely the proof of  Proposition \ref{propCauchyintro} (see Proposition \ref{propCauchy}). In section \ref{sectionBCarithmNO} we recall the definition of the Boucksom--Chen concave transform and we define arithmetic Okounkov bodies; the only new material in this section is a slight generalization of Boucksom and Chen's construction, for which we do not require the underlying divisor $D$ to be big (§§~\ref{paraggeneralconc} and \ref{paragarithmNO}). In section \ref{sectionpositivNO}, we prove Theorem \ref{thmnefintro}, Corollary \ref{corominabsintro}, and Theorem \ref{thmampleintro}. Finally, we present applications to the arithmetic Hilbert-Samuel theorem and to generic nets of small points and subvarieties in section \ref{sectionapplication}, where we prove Theorems \ref{coroHSintro} and \ref{thmgenericintro}.
 
 \subsection{Conventions and terminology}\label{sectionconv}

\subsubsection{} A scheme is integral if it is reduced and irreducible. A variety over a field $K$ is an integral scheme of finite type on $\Spec K$. Given a normal variety $X$ over $K$, we denote by $\Div(X)$ the group of Cartier divisors on $X$, and by $\Rat(X)$ the function field of $X$. If $\bK$ denotes $\bZ$, $\bQ$ or $\bR$, we let $\Div(X)_{\bK} = \Div(X) \otimes_\bZ \bK$ and $\Rat(X)_{\bK}^\times = \Rat(X)^\times \otimes_\bZ \bK$. For any $D \in \Div(X)_\bR$, we  define the set of \emph{non-zero rational $\bK$-sections} of $D$  as
\[\Rat(X,D)_\bK^\times := \Rat(X)_\bK^\times \times \{D\},\]
and the set of  \emph{non-zero global $\bK$-sections} of $D$ as 
\[\Gamma(X, D)_\bK^\times = \{ (\phi, D) \in \Rat(X,D)_\bK^\times \ | \ (\phi) + D \geq 0\}.\]
  When $\bK = \bZ$, we shall omit the subscript $\bZ$ in the above notations. The \emph{support} $\Supp(D)$ of $D \in \Div(X)_\bR$ is defined as the support of the $\bR$-Weil divisor associated to $D$. For any field extension $K'$ of $K$, we let $X_{K'} = X \times_K K'$ and we denote by $D_{K'}$ the pullback of $D$ to $X_{K'}$. A subvariety $Y$ of $X_{K'}$ is an irreducible Zariski-closed subset of $X_{K'}$ equipped with the reduced induced scheme structure. For any $D \in \Div(X)_{\bR}$, the \emph{degree} of a subvariety $Y \subseteq X_{\overline{K}}$ with respect to $D$ is defined as $\deg_D(Y) =  D_{\overline{K}}^{\dim Y} \cdot Y$, where $\overline{K}$ is an algebraic closure of $K$. 
  
\subsubsection{} Let $S$ be a Noetherian integral scheme with function field $\Rat(S)$ and let $X$ be a projective variety over $\Rat(S)$. Given a dense open subset $U \subseteq S$, a \emph{model of $X$} over $U$ is an integral scheme equipped with a projective morphism $\pi\colon \mathcal{X} \rightarrow U$ such that $X = \mathcal{X}\times_U \Rat(S)$. Let $D \in \Div(X)_\bR$,  that is $D = \sum_{i = 1}^n a_i D_i$ for some Cartier divisors $D_1,\ldots, D_n$ on $X$ and $a_1, \ldots, a_n \in \bR$. We assume that for each $i \in \{1, \ldots, n\}$ there exists a Cartier divisor $\mathcal{D}_i$ on $\mathcal{X}$ such that $\mathcal{D}_i \cap X =D_i$, and we let $\mathcal{D} = \sum_{i = 1}^n a_i \mathcal{D}_i$. The pair $(\mathcal{X}, \mathcal{D})$ is called a \emph{model of $(X,D)$} over $U$.  We say that the model 
$(\mathcal{X}, \mathcal{D})$ is relatively nef if $\mathcal{D}$ is relatively nef with respect to $\pi$ (see \cite[0.5.6]{MoriwakiMAMS}).
\subsubsection{} Let $K_0$ denote either $\bQ$ or the field of functions $k(T)$, where $k$ is an arbitrary field. By definition, a global field $K$ is a finite extension of $K_0$. We denote by $\Sigma_K$ the set of places of $K$, and by $\Sigma_{K,\infty} \subseteq \Sigma_K$ the set of Archimedean places (which is empty if $K_0=k(T)$). For any $v \in \Sigma_K$, we let $K_v$ be the completion of $K$ with respect to $v$ and $K_{0,v}$ be the completion of $K_0$ with respect to the restriction of $v$ to $K_0$. We fix an algebraic closure of $K_v$, and we denote by $\bC_v$ its completion.

\subsubsection{}\label{paragweights} We shall normalize absolute values on a global field $K$. Assume first that $K$ is a number field. For each place $v \in \Sigma_K$, we let $|.|_v$ be the unique absolute value on $K$ satisfying the following property~: $|p|_v = p^{-1}$ if $v$ is a finite place over a prime number $p$, and $|.|_v = |.|$ is the restriction of the usual absolute value on ${\mathbb{C}}$ if $v$ is Archimedean. For each $v \in \Sigma_K$,  we let $n_v(K) = [K_v:{\mathbb{Q}}_v]/[K:{\mathbb{Q}}]$.  We now assume that $K$ is a finite extension of $K_0 = k(T)$ for some field $k$. Equivalently, $K = \Rat(C_K)$ is the function field of a regular projective curve $C_K$ over $k$ equipped with a finite morphism $\varphi_K \colon C_K \rightarrow \mathbb{P}^1_k$, unique up to $k$-isomorphism.  The set of places of $K$ is in one-to-one correspondence with the set  of closed points of $C_K$. For each $v \in \Sigma_K$ and each $f \in K^\times =  {\Rat}(C_K)^\times$, we denote by $\ord_v(f)$ the order of $f$ in the discrete valuation ring ${\mathcal{O}}_{C_K,v}$ { and by $e_v(\varphi_K)$ the ramification index of $\varphi_K\colon C_K \rightarrow {\mathbb{P}}^1_k$ at $v \in C_K$. We consider the absolute value $|.|_v$ on $K$ given by $|f|_v = e^{-\ord_v(f)/e_v(\varphi_K)}$ and we let 
   \[n_v(K) = \frac{e_v(\varphi_K) [\kappa(v):k]}{[K:K_0]},\]
 where $\kappa(v)$ denotes the residue field of $v$ in $C_K$}.

\subsubsection{} Let $K$ be a global field and let $X$ be a scheme of finite type on $\Spec K$. For any place $v \in \Sigma_K$ and for any subscheme $U \subseteq X_{\bC_v} = X \times_K \Spec \bC_v$, we denote by $U^{\mathrm{an}}$ the analytification of $U$ in the sense of Berkovich \cite{Ber90} (see \cite[§~1.2]{BPS14} and \cite[§~1.3]{MoriwakiMAMS} for a short introduction). Note that $U(\bC_v)$ is a dense subset of $U^{\mathrm{an}}$. If $U = \Spec A$ is affine, then the underlying set of $U^{\mathrm{an}}$ is the set of multiplicative semi-norms on $A$ extending $|.|_v$. In that case, we write $|a(z)|_v=|a|_z $ for $a \in A$ and $z = |.|_z \in U^{\mathrm{an}}$.
  
\subsubsection{}  Throughout this paper,  $X$ denotes a normal and geometrically integral projective variety of dimension $d \geq 1$  over a global field $K$. We fix an algebraic closure $\overline{K}$ of $K$, and we let $\mathcal{S}$ be the scheme defined as follows:
 \begin{itemize}
 \item if $K$ is a number field with ring of integers $\cO_K$, $\mathcal{S} = \Spec \cO_K$;
 \item if $K$ is a finite extension of $k(T)$ for some field $k$, $\mathcal{S} = C_K$ is a regular projective curve over $k$ such that $\Rat(C_K) = K$.
 \end{itemize}
\section{Adelic \texorpdfstring{$\bR$}{R}-Cartier divisors}\label{sectionadelicdiv}

In this section we define adelic $\bR$-Cartier divisors and we recall various notions of positivity. We mainly follow \cite{MoriwakiMAMS} and \cite{BMPS}.

\subsection{Definitions}

 Let $D \in \Div(X)_{\mathbb{R}}$ and  $v \in \Sigma_K$. We  consider an open covering $X_{\bC_v} = \cup_{i=1}^{\ell}U_i$  such that $D_{\bC_v}$ is defined by $f_i \in { { \Rat}(X_{\bC_v})^\times_{\mathbb{R}}}$ on $U_i$ for each $i \in \{1,\ldots, \ell\}$. A continuous (respectively smooth) $D$\textit{-Green function} on $X_{\bC_v}^{\mathrm{an}}$ is a  function 
\[g_v \colon X_{\bC_v}^{\mathrm{an}}\setminus (\Supp D_{\bC_v})^{\mathrm{an}} \rightarrow {\mathbb{R}}\]
such that $g_v + \ln |f_i|^2_v$ extends to a continuous (respectively smooth) function on the analytification $U_i^{\mathrm{an}}$ of $U_i$ for each $i \in \{1, \ldots, \ell\}$. We refer the reader to \cite[sections 1.4 and 2.1]{MoriwakiMAMS} for more detail on Green functions. If $(\mathcal{X}, \mathcal{D})$ is a model of $(X,D)$ over a dense open subset $U \subseteq \mathcal{S}$, then for each $v \in U$ we denote by $g_{\mathcal{D},v}$ the $D$-Green function on $X_{\bC_v}^{\mathrm{an}}$ induced by $\mathcal{D}$ (see \cite[§~0.2]{MoriwakiMAMS}).  

\begin{defi}\label{defiadelicCartier} An adelic $\bR$-Cartier divisor on $X$ is the data $\overline{D} = (D,(g_v)_{v \in \Sigma_K})$ of an $\bR$-Cartier divisor $D \in \Div(X)_\bR$ and, for each place $v \in \Sigma_K$, of a continuous $D$-Green function $g_v \colon X_{\bC_v}^{\mathrm{an}} \rightarrow \bR$ such that the following conditions are satisfied:
\begin{enumerate}
\item for each $v \in \Sigma_K$, $g_v$ in invariant under the action of $\Gal(\bC_v/K_v)$;
\item there exists a model $(\mathcal{X}, \mathcal{D})$ of $(X,D)$ over a dense open subset $U \subseteq \mathcal{S}$ such that $g_v = g_{\mathcal{D},v}$ for every $v \in U$.
\end{enumerate}
\end{defi}

We denote by $\widehat{\Div}(X)_\bR$ the $\bR$-vector space of adelic $\bR$-divisors. Since $X$ is normal, we have inclusions $\Div(X) \subseteq \Div(X)_\bQ \subseteq \Div(X)_\bR$. Therefore we can consider the subgroups $\widehat{\Div}(X),\widehat{\Div}(X)_\bQ$ of $\widehat{\Div}(X)_\bR$ defined by 
\[\widehat{\Div}(X) = \{\overline{D} = (D,(g_v)_{v \in \Sigma_K}) \in \widehat{\Div}(X)_\bR \ | \ D \in \Div(X)\}\]
and 
\[\widehat{\Div}(X)_\bQ = \{\overline{D} = (D,(g_v)_{v \in \Sigma_K}) \in \widehat{\Div}(X)_\bR \ | \ D \in \Div(X)_\bQ\}.\]
The elements of  $\widehat{\Div}(X)$ (respectively $\widehat{\Div}(X)_\bQ$) are called adelic Cartier divisors (respectively adelic $\bQ$-Cartier divisors) on $X$. 

\begin{example}\label{exampledeg} Let $(\xi_v)_{v\in \Sigma_K}$ be a collection of real numbers such that $\xi_v =0$ for all except finitely many $v \in \Sigma_K$. Then $\overline{\xi} = (0, (\xi_v)_{v\in \Sigma_K}) \in \widehat{\Div}(X)$. The  arithmetic degree of $\overline{\xi}$ is defined as 
\[\widehat{\deg}(\overline{\xi}):= \frac{1}{2}\sum_{v\in \Sigma_K}n_v(K)\xi_v. \]

\end{example}

\subsection{Small sections and arithmetic volume}\label{paragvolumes}  Let $\overline{D} = (D,(g_v)_{v\in\Sigma_K})$ be an adelic $\bR$-Cartier divisor on $X$, 
 and let $s = (\phi, D) \in \Rat(X,D)_\bR^\times$.  We let $\divi(s) = (\phi) + D \in \Div(X)_\bR$. For any  $z \in X_{\bC_v}^{\mathrm{an}}\setminus (\Supp ( \divi(s)))_{\bC_v}^{\mathrm{an}}$, we define
 \[\|s(z)\|_v = |\phi_v(z)|_v \exp(-g_v(z)/2),\]
 where $\phi_v$ is the pull-back of $\phi$ to $X_{\bC_v}$.
  If $s \in \Gamma(X, D)_\bR^\times$,  
 then $z \mapsto \|s(z)\|_v$ extends to a continuous function on  $X_{\bC_v}^{\mathrm{an}}$ (see \cite[Propositions 1.4.2 and 2.1.3]{MoriwakiMAMS}). In that case, we let $\|s\|_{v,\sup} = \sup_{z \in X_{\bC_v}^{\mathrm{an}}}\|s(z)\|_v$ and we say that $s$ is \emph{small} if $\|s\|_{v,\sup} \leq 1$ for all $v\in \Sigma_K$. We denote by $\widehat{\Gamma}(X, \overline{D})^\times$ (respectively $\widehat{\Gamma}(X, \overline{D})_\bR^\times$) the set of small non-zero global sections (respectively small non-zero $\bR$-global sections) of $D$.    We let $\widehat{\Gamma}(X, \overline{D}) = \widehat{\Gamma}(X, \overline{D})^\times \cup \{0\}$, 
 and we set 
\[\widehat{h}^0(X, \overline{D}) = \left\{
 \begin{tabular}{ll}
 $\ln \# \widehat{\Gamma}(X, \overline{D})$  & if $K_0 =  \bQ$,\\
 $\dim_k \widehat{\Gamma}(X, \overline{D})$  & if $K_0 = k(T)$.
 \end{tabular}
 \right.\]
  We define the \emph{arithmetic volume} of $\overline{D}$ as 
 \[\widehat{\vol}(\overline{D}) := \frac{1}{[K:K_0]}\limsup_{n \rightarrow \infty} \frac{\widehat{h}^0(X, n\overline{D})}{n^{d+1}/(d+1)!}.\]
The arithmetic volume satisfies the following continuity property, due to Moriwaki \cite[Theorem 5.2.1]{MoriwakiMAMS} (see also \cite[Theorem 6.4.24]{CMadelic}): for any adelic $\bR$-Cartier divisor $\overline{D}'\in \widehat{\Div}(X)_\bR$, we have
\begin{equation}\label{eqcontvol}
\lim_{\varepsilon \rightarrow 0} \widehat{\vol}(\overline{D} + \varepsilon\overline{D}') = \widehat{\vol}(\overline{D}).
\end{equation}
\subsection{Height function and semi-positivity}\label{sectionheightsemipos}
 Throughout this subsection, we fix an adelic $\bR$-Cartier divisor $\overline{D} = (D,(g_v)_{v\in\Sigma_K})$ on $X$.
\subsubsection{Height of points}\label{paragheightpoint}
  Let $x \in X_{\overline{K}}$ be a closed point, and let $K'$ be a finite extension of $K$ such that $x \in X(K')$. Let $w \in \Sigma_{K'}$ and let $\sigma_w \colon K'_w \hookrightarrow \bC_v$ be a $K_v$-embedding, where $v$ denotes the restriction of $w$ to $K$. The pair $(x, \sigma_w)$ uniquely determines  a point $x_w \in X_{\bC_v}(\bC_v)$. By \cite[§~4.2]{MoriwakiMAMS}, there exists $s \in \Rat(X,D)_\bR^\times$ such that $x \notin \Supp(\divi(s)_{K'})$. By Galois invariance of $g_v$, the quantity $\|s(x)\|_w:=\|s(x_w)\|_v$ does not depend on the choice of $\sigma_w$. We define the \emph{height of $x$} with respect to $\overline{D}$ as 
\[\widehat{h}_{\overline{D}}(x) = -\sum_{w \in \Sigma_{K'}} n_w(K') \ln \|s(x)\|_w.\]
It does not depend on the choices of $s$ and $K'$.
 
 \subsubsection{Semi-positivity and height of subvarieties}\label{paragheightvar}

In order to define the height of higher dimensional subvarieties, we recall the notion  of semi-positive adelic $\bR$-Cartier divisors used in \cite{BMPS}.

\begin{defi}\label{defisemipos} We say that $\overline{D}$ is semi-positive if for each $v \in \Sigma_K$, there exists a sequence $(g_{n,v})_{n\in \bN}$ satisfying the following conditions:
\begin{itemize}
\item if $v$ is Archimedean, $g_{n,v}$ is a smooth plurisubharmonic $D$-Green function invariant under complex conjugation for every $n \in \bN$;
\item if $v$ is non-Archimedean, then for every $n \in \bN$ there exists a relatively nef model $(\mathcal{X}_n, \mathcal{D}_n)$ of $(X_{K_v},D_{K_v})$ over the valuation ring of $K_v$ such that $g_{n,v} = g_{\mathcal{D}_{n},v}$;
\item for every $v \in \Sigma_K$, $(g_{n,v})_{n \in \bN}$ converges uniformly to $g_v$.
\end{itemize}

\end{defi}

\begin{rema} \begin{enumerate}[leftmargin=0.8cm]
\item Let $U \subseteq \mathcal{S}$ be a dense open subset as in Definition \ref{defiadelicCartier}, that is there exists a model $(\mathcal{X},\mathcal{D})$ of $(X,D)$ over $U$ such that $g_v = g_{\mathcal{D},v}$ for every $v \in U$.  If $\overline{D}$ is semi-positive, then $(\mathcal{X},\mathcal{D})$ is relatively nef by \cite[Corollary A.3.2]{MoriwakiMAMS}.
 In particular, $D$ is nef.

\item If $K$ is a number field, $\overline{D}$ is semi-positive if and only if it is relatively nef in the sense of \cite[Definition 4.4.1]{MoriwakiMAMS}. Moreover, our definition of semi-positivity coincides with the one of \cite[Definition 3.9]{BaEM} by \cite[Proposition 4.4.2]{MoriwakiMAMS}.

\end{enumerate}

\end{rema}

 Assume that $\overline{D}$ is semi-positive and let $Y \subseteq X$ be a subvariety. We denote by $h_{\overline{D}}(Y)$ the height of $Y$ with respect to $\overline{D}$ as defined in \cite[page 225]{BMPS}. By \cite[Proposition 1.5.10]{BPS14}, this definition is invariant by finite field extension; in particular, we can define the height  $h_{\overline{D}}(Y)$ of any subvariety $Y \subseteq X_{\overline{K}}$. If $Y \subseteq X_{\overline{K}}$ is a subvariety with $\deg_{D}(Y) \ne 0$, the \emph{normalized height} of $Y$ with respect to $\overline{D}$ is  defined as
\[ \widehat{h}_{\overline{D}}(Y) = \frac{{h}_{\overline{D}}(Y)}{(\dim Y + 1)\deg_{D}(Y)}.\]
We gather some basic properties of heights in the following remark.

\begin{rema}\label{remaheight}  \begin{enumerate}[leftmargin=0.8cm]
\item If $x \in X_{\overline{K}}$ is a closed point, then $ \widehat{h}_{\overline{D}}(\{x\})= h_{\overline{D}}(\{x\})= \widehat{h}_{\overline{D}}(x)$ coincides with the height of $x$ defined in §~\ref{paragheightpoint} (here $\{x\}$ is considered as a subvariety of $X_{\overline{K}}$).
\item\label{rematwistheight} Let $\overline{\xi} = (0, (\xi_v)_{v \in \Sigma_K}) \in \widehat{\Div}(X)$ be such that $\widehat{\deg}(\overline{\xi} )=1$, 
 and let $\overline{D}(t) := \overline{D} - t\overline{\xi}$ for some real number $t \in \bR$. For any subvariety $Y\subseteq X_{\overline{K}}$, we have 
\[h_{\overline{D}(t)}(Y) = h_{\overline{D}}(Y) - t(\dim Y + 1)\deg_D(Y).\]
In particular, if $\deg_D(Y) \ne 0$ then $\widehat{h}_{\overline{D}(t)}(Y) = \widehat{h}_{\overline{D}}(Y) -t$. This follows from the Bézout formula \cite[(3.13) page 225]{BMPS} by induction on $\dim Y$.
\item\label{remaheightgeom} Assume that $K$ is a function field and that there exists a $C_K$-model $(\mathcal{X}, \mathcal{D})$ of $(X,D)$ with $g_v = g_{\mathcal{D},v}$ for every place $v \in \Sigma_K$. Then for any subvariety $\mathcal{Y} \subseteq \mathcal{X}$ such that $Y:= \mathcal{Y} \cap X \ne \emptyset$, we have $h_{\overline{D}}(Y) = \mathcal{D}^{\dim \mathcal{Y}} \cdot \mathcal{Y}$. 
\end{enumerate}
\end{rema}

\subsection{Positivity of adelic \texorpdfstring{$\bR$}{R}-Cartier divisors}\label{paragpositivitydef}
Let $\overline{D} = (D, (g_v)_{v \in \Sigma_K})$ be an adelic $\bR$-Cartier divisor on $X$.  We define the \emph{absolute minimum of $\overline{D}$} as 
\[\zeta_{\mathrm{abs}}(\overline{D}) = \inf_{x \in X_{\overline{K}}} \widehat{h}_{\overline{D}}(x) \in \bR \cup \{- \infty\}.\]
We recall below different notions of arithmetic positivity, following \cite[Definition 3.18]{BMPS}.

\begin{defi}\label{defipositivity}
  We say that $\overline{D}$ is
\begin{enumerate}
\item \emph{big} if $\widehat{\vol}(\overline{D}) > 0$;
\item  \emph{pseudo-effective} if $\overline{D} + \overline{A}$ is big for any big adelic $\bR$-Cartier divisor $\overline{A}$ on $X$;
\item \emph{nef} if $\overline{D}$ is semi-positive and if $\zeta_{\mathrm{abs}}(\overline{D}) \geq 0$;
\item \emph{generated by strictly small $\bR$-sections} if for every $x\in X_{\overline{K}}$, there exists $s \in \widehat{\Gamma}(X,\overline{D})_\bR^\times$ such that $x \notin \Supp(\divi(s)_{\overline{K}})$ and 
\[\sum_{v \in \Sigma_K} n_v(K) \ln \|s\|_{v,\sup} < 0;\]
\item \emph{ample} if the following three conditions hold: 
\begin{itemize}
\item $D$ is ample,
\item $\overline{D}$ is semi-positive, 
\item $\overline{D}$ is generated by strictly small $\bR$-sections.
\end{itemize}
\end{enumerate}
\end{defi}

\begin{rema} By \cite[Proposition 3.23]{BMPS}, our definition of pseudo-effective adelic $\bR$-Cartier divisors coincides with the one of \cite[Definition 3.18]{BMPS}.

\end{rema}

The following generalization of the arithmetic Hodge index Theorem is due to Moriwaki.

\begin{theorem}[Moriwaki]\label{thmHodge} If $\overline{D}$ is semi-positive, then $h_{\overline{D}}(X) \leq \widehat{\vol}(\overline{D})$, with equality if $\overline{D}$ is nef.  
\end{theorem}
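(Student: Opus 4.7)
My plan is to deduce both assertions from the Boucksom--Chen integral representations of the arithmetic volume and of the height in terms of the concave transform $G_{\overline{D},\nu}$ on the Okounkov body $\Delta_\nu(D)$ (stated in Theorem~\ref{thmBC} below), combined with the arithmetic Hilbert--Samuel theorem~\eqref{eqHS} for the equality case.

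As a preliminary step, I would reduce to the case where the underlying divisor $D$ is big. Fix an ample adelic Cartier divisor $\overline{A}$ on $X$ and set $\overline{D}_\varepsilon := \overline{D} + \varepsilon \overline{A}$ for $\varepsilon > 0$. Then $\overline{D}_\varepsilon$ is semi-positive with $D_\varepsilon$ big, the continuity property~\eqref{eqcontvol} of the arithmetic volume gives $\widehat{\vol}(\overline{D}_\varepsilon) \to \widehat{\vol}(\overline{D})$, and the multilinearity of the arithmetic intersection pairing extended to semi-positive adelic $\bR$-Cartier divisors ensures that $h_{\overline{D}_\varepsilon}(X)$ is a polynomial in $\varepsilon$ converging to $h_{\overline{D}}(X)$ as $\varepsilon \to 0^+$. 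Under the additional hypothesis that $D$ is big, for any valuation of maximal rank $\nu$ on $\Rat(X_{\overline{K}})$ the Boucksom--Chen formulas read
\[
h_{\overline{D}}(X) = (d+1)! \int_{\Delta_\nu(D)} G_{\overline{D},\nu}(\alpha)\,d\alpha, \qquad \widehat{\vol}(\overline{D}) = (d+1)! \int_{\Delta_\nu(D)} \max\bigl(G_{\overline{D},\nu}(\alpha),0\bigr)\,d\alpha,
\]
and subtracting these two identities, using the pointwise bound $\max(G,0) \geq G$, yields
\[\widehat{\vol}(\overline{D}) - h_{\overline{D}}(X) = (d+1)! \int_{\{G_{\overline{D},\nu} < 0\}} \bigl(-G_{\overline{D},\nu}(\alpha)\bigr)\,d\alpha \geq 0,\]
which is precisely the desired inequality.

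For the equality statement when $\overline{D}$ is nef, I would directly apply the arithmetic Hilbert--Samuel theorem~\eqref{eqHS} with $\overline{N}=0$, which gives $\widehat{h}^0(X,n\overline{D}) = \tfrac{h_{\overline{D}}(X)}{(d+1)!}n^{d+1} + o(n^{d+1})$; dividing by $n^{d+1}/(d+1)!$ and passing to the $\limsup$ produces $\widehat{\vol}(\overline{D}) = h_{\overline{D}}(X)$. The main obstacle in the plan is the justification of the Boucksom--Chen integral formulas in the adelic $\bR$-Cartier setting, together with the parallel continuity statement for $h_{\overline{D}_\varepsilon}(X)$ under perturbation by an ample divisor; both are nontrivial but follow from Moriwaki's continuity theorem for $\widehat{\vol}$, Chen's convergence theorems for graded linear series of big $\bR$-divisors, and the multilinear extension of the arithmetic intersection pairing to semi-positive $\bR$-Cartier divisors developed in \cite{BMPS}.
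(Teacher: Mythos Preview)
Your approach is circular. In this paper, the height formula in Theorem~\ref{thmBC} (the part involving $h_{\overline{D}}(X)$) is \emph{deduced from} Theorem~\ref{thmHodge}: its proof applies Theorem~\ref{thmHodge} to the twist $\overline{D}(t)=\overline{D}-t\overline{\xi}$ to get $h_{\overline{D}(t)}(X)\le\widehat{\vol}(\overline{D}(t))$ (with equality when $\overline{D}(t)$ is nef), and then rewrites both sides using the volume formula. So invoking Theorem~\ref{thmBC} to prove Theorem~\ref{thmHodge} assumes what is to be shown. Note also that the identity you write, $h_{\overline{D}}(X)=(d+1)!\int_{\Delta_\nu(D)}G_{\overline{D},\nu}$, is stronger than what Theorem~\ref{thmBC} actually asserts: the theorem gives only $h_{\overline{D}}(X)\le (d+1)!\int\max\{t,G_{\overline{D},\nu}\}$, with equality conditional on $\zeta_{\mathrm{abs}}(\overline{D})\ge t$, and that conditional equality is again precisely the nef case of Theorem~\ref{thmHodge}. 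The same circularity affects your treatment of the equality case: as noted immediately after~\eqref{eqHS}, the Hilbert--Samuel asymptotic is equivalent to $\widehat{\vol}(\overline{D})=h_{\overline{D}}(X)$, i.e.\ it \emph{is} the equality assertion you are trying to prove.

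The paper's proof does not pass through Okounkov bodies at all. For number fields it cites Moriwaki \cite[Theorem~5.3.2]{MoriwakiMAMS}, whose argument proceeds via continuity of the arithmetic volume and reduction to the model case (ultimately resting on arithmetic Riemann--Roch). For function fields, after a parallel reduction to a normal relatively ample model $(\mathcal{X},\mathcal{D})$ over $C_K$, the statement becomes one about geometric intersection numbers on $\mathcal{X}$ and follows from the classical asymptotic Riemann--Roch theorem as in \cite[Proof of Theorem~7.3]{BaEM}. The logical order in the paper is therefore: first establish Theorem~\ref{thmHodge} by these external methods, and only then derive the Boucksom--Chen height inequality in Theorem~\ref{thmBC} from it.
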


\begin{proof} When $K$ is a number field, this is \cite[Theorem 5.3.2]{MoriwakiMAMS}. When $K$ is a function field, by arguing as in \cite{MoriwakiMAMS} it is enough to prove the result under the additional following assumption: $D \in \Div(X)$ and 
 there exists a normal $C_K$-model $(\mathcal{X},\mathcal{D})$ of $(X,D)$ such that $\mathcal{D} \in \Div(\mathcal{X})$ is a  relatively ample Cartier divisor on $\mathcal{X}$ with $g_v = g_{\mathcal{D},v}$ for every $v \in \Sigma_K$. In that case the result follows from the asymptotic Riemann-Roch theorem as in \cite[Proof of Theorem 7.3]{BaEM}.
\end{proof}

\begin{coro}\label{lemmaminabsinfsubvar} For any semi-positive $\overline{D} \in \widehat{\Div}(X)_\bR$, we have 
 \[\zeta_{\mathrm{abs}}(\overline{D}) = \inf_{\substack{Y \subseteq X_{\overline{K}}\\ \deg_D(Y)>0}} \widehat{h}_{\overline{D}}(Y),\]
where the infimum is over the subvarieties $Y \subseteq X_{\overline{K}}$ such that $\deg_D(Y)>0$.
\end{coro}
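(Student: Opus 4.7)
The easy direction is immediate: for any closed point $x \in X_{\overline{K}}$, the subvariety $\{x\}$ has $\deg_D(\{x\}) = 1 > 0$ and $\widehat{h}_{\overline{D}}(\{x\}) = \widehat{h}_{\overline{D}}(x)$ by Remark \ref{remaheight}, so the infimum over subvarieties with positive degree is at most $\zeta_{\mathrm{abs}}(\overline{D})$.

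For the reverse inequality, I must show $\widehat{h}_{\overline{D}}(Y) \geq \zeta_{\mathrm{abs}}(\overline{D})$ for every subvariety $Y \subseteq X_{\overline{K}}$ with $\deg_D(Y) > 0$. The case $\zeta_{\mathrm{abs}}(\overline{D}) = -\infty$ is trivial, so I assume $t := \zeta_{\mathrm{abs}}(\overline{D}) \in \bR$. The plan is a twist-and-Hodge-index argument. Fix $\overline{\xi} \in \widehat{\Div}(X)$ with $\widehat{\deg}(\overline{\xi}) = 1$ (Example \ref{exampledeg}) and set $\overline{D}(t) = \overline{D} - t\,\overline{\xi}$. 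Because $\overline{\xi}$ shifts the Green functions of $\overline{D}$ only by additive constants at finitely many places, $\overline{D}(t)$ remains semi-positive; and by Remark \ref{remaheight}(\ref{rematwistheight}), $\widehat{h}_{\overline{D}(t)}(x) = \widehat{h}_{\overline{D}}(x) - t \geq 0$ for every closed point $x$, so $\overline{D}(t)$ is nef. Using the same remark, the inequality $\widehat{h}_{\overline{D}}(Y) \geq t$ is equivalent to $h_{\overline{D}(t)}(Y) \geq 0$. Replacing $\overline{D}$ by $\overline{D}(t)$, it therefore suffices to prove $h_{\overline{D}}(Y) \geq 0$ under the assumption that $\overline{D}$ is nef.

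To finish, I would apply Theorem \ref{thmHodge} to a normal projective model of $Y$. After passing to a finite extension $K'$ of $K$ over which $Y$ descends, let $\pi : \tilde{Y} \to Y$ denote the normalization of the descended model; then $\tilde{Y}$ is a normal, projective, geometrically integral variety of dimension $\dim Y$ over $K'$, and the pullback $\pi^*\overline{D}$ is a semi-positive adelic $\bR$-Cartier divisor on $\tilde{Y}$ (a standard compatibility: pullback preserves plurisubharmonicity of Archimedean Green functions and relative nefness of non-Archimedean models). Since heights of points pull back correctly, $\zeta_{\mathrm{abs}}(\pi^*\overline{D}) \geq \zeta_{\mathrm{abs}}(\overline{D}) = 0$, so $\pi^*\overline{D}$ is in fact nef on $\tilde{Y}$. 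Theorem \ref{thmHodge} then gives $h_{\pi^*\overline{D}}(\tilde{Y}) = \widehat{\vol}(\pi^*\overline{D}) \geq 0$, and the invariance of heights under finite field extensions and normalization (\cite[Proposition 1.5.10]{BPS14}, cf.~§\ref{paragheightvar}) yields $h_{\overline{D}}(Y) = h_{\pi^*\overline{D}}(\tilde{Y}) \geq 0$. Dividing by $(\dim Y + 1)\deg_D(Y) > 0$ gives $\widehat{h}_{\overline{D}}(Y) \geq 0$, as desired.

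The main obstacle lies in the last paragraph: one must carefully invoke pullback and descent properties for adelic $\bR$-Cartier divisors, heights of subvarieties, and semi-positivity. These compatibilities are standard in the Moriwaki--Burgos Gil--Philippon--Sombra framework but need precise references; conceptually, however, the proof reduces to the observation that once $\overline{D}$ is nef, the height of any positive-degree subvariety is bounded below by its arithmetic volume, which is non-negative.
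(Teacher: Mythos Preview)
Your proposal is correct and follows essentially the same approach as the paper: twist by $\zeta_{\mathrm{abs}}(\overline{D})$ to reduce to the nef case, then invoke Theorem \ref{thmHodge} to conclude that the height of every positive-degree subvariety is non-negative. The paper's proof simply writes ``$\widehat{h}_{\overline{D}(\zeta)}(Y) \geq 0$ by Theorem \ref{thmHodge}'' without spelling out the passage to the normalization of $Y$ that you make explicit, but the argument is the same.
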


\begin{proof}
 Assume that $\zeta:=\zeta_{\mathrm{abs}}(\overline{D})\in \bR$, and 
 define $\overline{D}(\zeta)$ as in Remark \ref{remaheight} \eqref{rematwistheight}. Then $\widehat{h}_{\overline{D}(\zeta)}(Y) = \widehat{h}_{\overline{D}}(Y) - \zeta$ for any subvariety $Y \subseteq X_{\overline{K}}$ with $\deg_D(Y)>0$. In particular, $\overline{D}(\zeta)$ is nef and thus $\widehat{h}_{\overline{D}(\zeta)}(Y) \geq 0$ by Theorem \ref{thmHodge}. Therefore
\[\inf_{\substack{Y \subseteq X_{\overline{K}}\\ \deg_D(Y)>0}} \widehat{h}_{\overline{D}}(Y) \geq \zeta_{\mathrm{abs}}(\overline{D}).\]
The other inequality is obvious, since $\deg_D(\{x\}) = 1$ for every $x \in X_{\overline{K}}$.
\end{proof}

 The following theorem is an arithmetic version of the Nakai--Moishezon criterion. Over a number field, it was originally proved by Zhang \cite[Corollary 4.8]{Zhangplav} for hermitian line bundles, and it was recently generalized to adelic $\bR$-Cartier divisors in \cite{BaNM}.
\begin{theorem}\label{thmNM} The following conditions are equivalent:
\begin{enumerate}
\item\label{itemcritample} $\overline{D}$ is ample;
\item\label{itemcritampleminpts} $\overline{D}$ is semi-positive, $D$ is ample and $\zeta_{\mathrm{abs}}(\overline{D}) > 0$;
\item\label{itemcritampleminsubv} $\overline{D}$ is semi-positive, $D$ is ample and $\widehat{h}_{\overline{D}}(Y) > 0$ for every subvariety $Y \subseteq X_{\overline{K}}$.
\end{enumerate}

\end{theorem}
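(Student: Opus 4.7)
My plan is to prove the chain $(\ref{itemcritample}) \Rightarrow (\ref{itemcritampleminpts}) \Rightarrow (\ref{itemcritampleminsubv}) \Rightarrow (\ref{itemcritample})$. The first two implications are short consequences of results already at hand; the last is the arithmetic Nakai--Moishezon statement proper, which concentrates the real work. For $(\ref{itemcritample}) \Rightarrow (\ref{itemcritampleminpts})$ I would use quasi-compactness: by assumption, for every $x \in X_{\overline{K}}$ there is $s_x \in \widehat{\Gamma}(X,\overline{D})_\bR^\times$ with $x \notin \Supp(\divi(s_x)_{\overline{K}})$ and $\sum_v n_v(K) \ln \|s_x\|_{v,\sup} < 0$. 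From the definition of the height (§~\ref{paragheightpoint}) this gives pointwise $\widehat{h}_{\overline{D}}(x) \geq -\sum_v n_v(K) \ln \|s_x\|_{v,\sup} > 0$. The opens $U_x := X_{\overline{K}} \setminus \Supp(\divi(s_x)_{\overline{K}})$ cover the quasi-compact space $X_{\overline{K}}$, so a finite subcover $U_{x_1},\dots,U_{x_N}$ produces the uniform bound $\zeta_{\mathrm{abs}}(\overline{D}) \geq -\max_{1\leq i\leq N} \sum_v n_v(K) \ln \|s_{x_i}\|_{v,\sup} > 0$. The implication $(\ref{itemcritampleminpts}) \Rightarrow (\ref{itemcritampleminsubv})$ is then immediate from Corollary \ref{lemmaminabsinfsubvar}: since $D$ is ample we have $\deg_D(Y) > 0$ for every subvariety $Y \subseteq X_{\overline{K}}$, and the corollary identifies $\zeta_{\mathrm{abs}}(\overline{D})$ with $\inf_Y \widehat{h}_{\overline{D}}(Y)$ over such $Y$, whence $\widehat{h}_{\overline{D}}(Y) \geq \zeta_{\mathrm{abs}}(\overline{D}) > 0$.

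The heart of the argument is $(\ref{itemcritampleminsubv}) \Rightarrow (\ref{itemcritample})$, following Zhang's original approach for hermitian line bundles \cite{Zhangplav} and its $\bR$-Cartier extension in \cite{BaNM}. The plan is induction on $d = \dim X$. For the inductive step, first observe that taking $Y = X$ in $(\ref{itemcritampleminsubv})$ yields $h_{\overline{D}}(X) > 0$; combined with the arithmetic Hodge index theorem (Theorem \ref{thmHodge}) this gives $\widehat{\vol}(\overline{D}) \geq h_{\overline{D}}(X) > 0$, so $\overline{D}$ is big. Given a closed point $x \in X_{\overline{K}}$, choose a geometric hyperplane section $H \subseteq X$ through $x$ relative to $D$ (possible by the classical Nakai--Moishezon criterion applied to the ample $D$). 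The restriction $\overline{D}|_H$ remains semi-positive, and the Bézout formula of Remark \ref{remaheight}\eqref{rematwistheight} propagates the positivity $\widehat{h}_{\overline{D}|_H}(Y) > 0$ to every subvariety $Y \subseteq H$. The inductive hypothesis then yields a strictly small $\bR$-section of some multiple $n\overline{D}|_H$ not vanishing at $x$, which one lifts to a strictly small section of $n\overline{D}$ on $X$ using the bigness of $\overline{D}$ combined with arithmetic Hilbert--Samuel (Theorem \ref{thmHodge}) to compare $\widehat{h}^0(X, n\overline{D})$ with the dimension of sections vanishing along $H$. For $\bR$-Cartier divisors one additionally approximates $\overline{D}$ by $\bQ$-Cartier divisors and passes to a limit via the continuity of $\widehat{\vol}$ in \eqref{eqcontvol}.

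The main obstacle I expect is the lifting step: producing strictly (not merely weakly) small sections of $n\overline{D}$ on $X$ with controlled restriction to $H$. Arithmetic Hilbert--Samuel only provides small sections, so one must introduce an auxiliary ample perturbation $\varepsilon \overline{A}$ of arbitrarily small arithmetic norm to upgrade smallness to strict smallness, then let $\varepsilon \to 0$ using continuity of the arithmetic volume. Coordinating this perturbation with the dimension induction, and ensuring compatibility with the $\bR$-coefficient approximation, forms the technical core of \cite{Zhangplav} and \cite{BaNM}; the remaining delicacy is that the sup-norm estimates needed for the lift must hold uniformly at every place of $K$, including the non-Archimedean ones, which requires working with the model-theoretic description of the Green functions coming from the relatively nef models in Definition \ref{defisemipos}.
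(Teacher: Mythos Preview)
Your chain $(1)\Rightarrow(2)\Rightarrow(3)\Rightarrow(1)$ reverses the paper's $(1)\Rightarrow(3)\Rightarrow(2)\Rightarrow(1)$. Your two short implications are correct: the quasi-compactness argument for $(1)\Rightarrow(2)$ is clean, and $(2)\Rightarrow(3)$ via Corollary~\ref{lemmaminabsinfsubvar} is immediate. The paper instead gets $(1)\Rightarrow(3)$ from the B\'ezout formula by induction on $\dim Y$, and proves $(3)\Rightarrow(2)$ by a dimension induction resting on \cite[Theorem~1.5]{BaEM}: from $\widehat{h}_{\overline{D}}(X)>0$ that result produces a proper closed $Y\subsetneq X_{\overline{K}}$ with $\inf_{x\notin Y}\widehat{h}_{\overline{D}}(x)>0$, and the induction hypothesis handles points of $Y$.

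The substantive divergence is in the hard direction. You attack $(3)\Rightarrow(1)$ by rerunning Zhang's section-lifting induction, whereas the paper proves $(2)\Rightarrow(1)$ and does not redo that argument. Over a number field the paper simply invokes \cite[Theorem~1.1]{BaNM}. Over a function field it takes a route that bypasses the lifting step you flag as the obstacle: it reduces by semi-positivity to the case of a relatively nef model $(\mathcal{X},\mathcal{D})$ over $C_K$, fixes $0<\zeta<\zeta_{\mathrm{abs}}(\overline{D})$, and uses Corollary~\ref{lemmaminabsinfsubvar} together with Remark~\ref{remaheight}~\eqref{remaheightgeom} to convert the arithmetic hypothesis into the geometric inequality $(\mathcal{D}-\zeta\mathcal{F})^{\dim\mathcal{Y}}\cdot\mathcal{Y}>0$ for every horizontal subvariety $\mathcal{Y}\subseteq\mathcal{X}$, with $\mathcal{F}$ a general fibre. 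Birkar's theorem \cite[Theorem~1.3]{Birkar} then forces the stable base locus of $\mathcal{D}-\zeta\mathcal{F}$ to miss the generic fibre, which directly yields the required strictly small $\bR$-sections. Your approach is in principle sound, but note that \cite{BaNM} is stated only for number fields; carrying the lifting-and-norm-control argument through in the function-field $\bR$-divisor setting (including normality of the hyperplane section $H$ so that the paper's framework applies inductively) is exactly the technical work the paper sidesteps via Birkar.
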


\begin{proof} The implication $\eqref{itemcritample} \Rightarrow \eqref{itemcritampleminsubv}$  
 follows from the Bézout formula \cite[(3.13) page 225]{BMPS} by induction on $\dim Y$. We shall prove that $\eqref{itemcritampleminsubv}\Rightarrow \eqref{itemcritampleminpts}$ by induction on $d$. Since $\widehat{h}_{\overline{D}}(X) > 0$ by assumption, there exists a closed subscheme $Y \varsubsetneq X_{\overline{K}}$ such that $\inf_{x \in X_{\overline{K}} \setminus Y} \widehat{h}_{\overline{D}}(x) > 0$ by \cite[Theorem 1.5]{BaEM}. On the other hand $\inf_{x \in Y} h_{\overline{D}}(x) > 0$ by the induction hypothesis, and therefore \eqref{itemcritampleminpts} holds. 
   If $K$ is a number field, the implication $\eqref{itemcritampleminpts} \Rightarrow \eqref{itemcritample}$ is a direct consequence of \cite[Theorem 1.1]{BaNM}. Assume that $K$ is a function field.    We shall prove that $\eqref{itemcritampleminpts} \Rightarrow \eqref{itemcritample}$ under the following additional assumption:  
  there exists a relatively nef model $(\mathcal{X}, \mathcal{D})$ of $(X,D)$ over $C_K$ such that $g_v = g_{\mathcal{D},v}$ for every $v \in \Sigma_K$. The general case follows by semi-positivity. Let $\zeta$ be a positive real number with $\zeta < \zeta_{\mathrm{abs}}(\overline{D})$. Then $\widehat{h}_{\overline{D}}(Y) > \zeta$ for any subvariety $Y \subseteq X_{\overline{K}}$ by Corollary \ref{lemmaminabsinfsubvar}. Let $\mathcal{F} \in \Div(\mathcal{X})$ be a general fiber of $\mathcal{X} \rightarrow C_K$ and let  
    $\mathcal{Y} \subseteq \mathcal{X}$ be subvariety such that 
   $Y := \mathcal{Y} \cap X_{\overline{K}}\ne \emptyset$. By Remark \ref{remaheight} \eqref{rematwistheight} and \eqref{remaheightgeom}, we have
  \begin{equation*}
  ( \mathcal{D} - \zeta \mathcal{F})^{\dim \mathcal{Y}} \cdot \mathcal{Y} = (\dim \mathcal{Y})\deg_D(Y)(\widehat{h}_{\overline{D}}(Y) - \zeta) > 0.
\end{equation*}  
By \cite[Theorem 1.3]{Birkar}, the set
\[\bigcap_{s \in \Gamma(\mathcal{X}, \mathcal{D} - \zeta \mathcal{F})_\bR^\times} \Supp(\divi(s))\]
 does not intersect $X$. Therefore, for every point $x \in X_{\overline{K}}$, there exists $s \in \Gamma(\mathcal{X},\mathcal{D} - \zeta \mathcal{F})_\bR^\times$ such that $x \notin \Supp(\divi(s_{|X}))$. By construction, such $s_{|X} \in \Gamma(X,D)^\times_\bR$ is a stricly small $\bR$-section for $\overline{D}$ and we are done.
\end{proof}

\begin{coro}\label{coroheightmin} Assume that $D$ is ample and that $\overline{D}$ is semi-positive. Then there exists a subvariety $Y \subseteq X_{\overline{K}}$ such that 
\[\zeta_{\mathrm{abs}}(\overline{D}) = \widehat{h}_{\overline{D}}(Y). \]
\end{coro}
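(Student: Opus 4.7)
The plan is to combine Noetherian induction on $X_{\overline{K}}$ with Zhang's inequality \eqref{ineqsmallnet} to exhibit a subvariety whose normalized height realizes $\zeta := \zeta_{\mathrm{abs}}(\overline{D})$, and in passing to confirm that $\zeta$ is finite. I would first consider the family
\[\mathcal{F} := \{W \subseteq X_{\overline{K}}\text{ closed} \mid \inf_{x \in W}\widehat{h}_{\overline{D}}(x) = \zeta\},\]
which is non-empty since $X_{\overline{K}} \in \mathcal{F}$ by the definition of $\zeta$. As $X_{\overline{K}}$ is a Noetherian topological space, $\mathcal{F}$ admits a minimal element $Z$. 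Because the infimum over a finite union of closed subsets equals the minimum of the individual infima, minimality forces $Z$ to be irreducible, hence a subvariety of $X_{\overline{K}}$. The claim is that $Y := Z$ works, i.e.\ $\widehat{h}_{\overline{D}}(Z) = \zeta$.

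The case $\dim Z = 0$ is trivial since then $Z$ consists of a single closed point of height $\zeta$. For $\dim Z \geq 1$, I would pick closed points $(p_n)_n$ in $Z$ with $\widehat{h}_{\overline{D}}(p_n) \to \zeta$. For any proper closed $Z' \subsetneq Z$, minimality of $Z$ inside $\mathcal{F}$ gives $\inf_{x \in Z'}\widehat{h}_{\overline{D}}(x) > \zeta$, so $p_n \notin Z'$ for $n$ large enough; hence $(p_n)_n$ is a generic net in $Z$. Since $D$ is ample, $D|_Z$ is ample, hence big, and since $\overline{D}$ is semi-positive so is $\overline{D}|_Z$. Zhang's inequality \eqref{ineqsmallnet} applied to $Z$ therefore gives
\[\zeta = \lim_n \widehat{h}_{\overline{D}}(p_n) \geq \widehat{h}_{\overline{D}}(Z),\]
which combined with the reverse inequality $\widehat{h}_{\overline{D}}(Z) \geq \zeta$ from Corollary \ref{lemmaminabsinfsubvar} yields the desired equality. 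This also forces $\zeta \in \bR$, since $\widehat{h}_{\overline{D}}(Z)$ is a priori finite.

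The main obstacle will be the legitimate application of Zhang's inequality to $Z$, which is a subvariety of $X_{\overline{K}}$ rather than of $X$: it is neither a priori normal nor defined over $K$. I would descend $Z$ to a finite extension $K'/K$ as a geometrically integral projective variety and replace it by its normalization; the pullback of $\overline{D}$ is a semi-positive adelic $\bR$-Cartier divisor with ample underlying divisor, while heights of closed points and of $Z$ itself are preserved under these finite birational operations thanks to \cite[Proposition 1.5.10]{BPS14} and the functoriality recalled in §~\ref{paragheightvar}. The inequality \eqref{ineqsmallnet} then applies verbatim to the normalized model and transports back to $Z$.
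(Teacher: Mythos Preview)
Your argument is correct and takes a genuinely different route from the paper. The paper's proof is a one-line application of the arithmetic Nakai--Moishezon criterion (Theorem~\ref{thmNM}): after twisting by $\overline{\xi}$ so that $\zeta_{\mathrm{abs}}(\overline{D}(\zeta)) = 0$, the twisted divisor fails to be ample, hence condition~(3) of Theorem~\ref{thmNM} fails and some subvariety $Y$ has $\widehat{h}_{\overline{D}(\zeta)}(Y) \leq 0$; combined with Corollary~\ref{lemmaminabsinfsubvar} this forces equality. Your approach instead bypasses Theorem~\ref{thmNM} entirely, locating the minimizing subvariety directly via Noetherian minimality and then pinning down its height using Zhang's inequality \eqref{ineqsmallnet} on $Z$. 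In effect you are reproving from scratch the implication $(3)\Rightarrow(2)$ of Theorem~\ref{thmNM} (whose proof in the paper also ultimately rests on Zhang's inequality via \cite[Theorem~1.5]{BaEM}), packaged as a minimal-element argument rather than an induction on dimension. The paper's route is shorter because Theorem~\ref{thmNM} is already available; yours is more self-contained and makes explicit which subvariety realizes the minimum, at the cost of the descent/normalization manoeuvre you flag in the final paragraph (which is indeed routine and handled exactly as you describe).
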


\begin{proof} Let $\zeta = \zeta_{\mathrm{abs}}(\overline{D})$,
and define $\overline{D}(\zeta)$ as in Remark \ref{remaheight} \eqref{rematwistheight}. Then $\zeta_{\mathrm{abs}}(\overline{D}(\zeta)) = \zeta_{\mathrm{abs}}(\overline{D}) - \zeta = 0$. By Theorem \ref{thmNM}, there exists a subvariety $Y \subseteq X_{\overline{K}}$ such that  $\widehat{h}_{\overline{D}}(Y) - \zeta_{\mathrm{abs}}(\overline{D}) = \widehat{h}_{\overline{D}(\zeta)}(Y)= 0$.
\end{proof}

\section{Cauchy's inequality for adelic Cartier divisors}\label{sectionCauchy}

The goal of this section is to prove Proposition \ref{propCauchy} below, which is the main new ingredient of this paper.  It can be considered as an adelic version of Cauchy's inequality, as it gives a lower bound for the height of a closed point $x \in X_{\overline{K}}$ in terms of the height of a global section and its order of vanishing at $x$.

\subsection{Differentiation of global sections}\label{paragdiffsections} Let  $\mathbf{z} = (z_1, \ldots, z_d)$ be a system of parameters centered at a closed regular point  $x \in X_{\overline{K}}$, that is a system of  generators  with $d$ elements of the maximal ideal of $\cO_{X_{\overline{K},x}}$.  For any $\bm{\alpha} = (\alpha_1, \ldots, \alpha_d) \in \bN^d$, we let $\mathbf{z}^{\bm{\alpha}} = z_1^{\alpha_1} \cdots z_d^{\alpha_d}$ and 
 $|\bm{\alpha}| := \alpha_1 + \cdots + \alpha_d$.   Since $x$ is regular, the completion $\widehat{\cO}_{X_{\overline{K}},x} $ of the local ring ${\cO}_{X_{\overline{K}},x}$ is naturally isomorphic to $\overline{K} [[z_1, \ldots z_d]]$.
  Therefore,  any $f \in \cO_{X_{\overline{K}},x}$ has a unique expression
 \[f = \sum_{\bm{\alpha} \in \bN^d} \bm{\partial}_{\mathbf{z}}^{\bm{\alpha}}f(x) \mathbf{z}^{\bm{\alpha}},\]
 where $\bm{\partial}_{\mathbf{z}}^{\bm{\alpha}}f(x) \in \overline{K}$ for every $\bm{\alpha} \in \bN^d$. Let $D \in \Div(X)$ and let $s_0 \in \Gamma (U,D_{\overline{K}})$ be a trivialization of $\cO_X(D)_{\overline{K}}$ on a neighbourhood $U \subseteq X_{\overline{K}}$ of $x$.
 Given a non-zero global section $s \in \Gamma(X,D)^\times$, there exists $f \in \cO_{X_{\overline{K}}}(U)$ such that $s_{|U} = f \cdot s_0$. We define the order of $s$ at $x$ by
\[\ord_x s = \min \{|\bm{\alpha}| \ | \ \bm{\partial}_{\mathbf{z}}^{\bm{\alpha}}f(x)\ne 0\}.\] 
Note that $\ord_xs$ does not depend on the choice of the system of parameters $\mathbf{z}$. By Leibniz's formula, it is also independent of $s_0$.

\subsection{Height of global sections} Let $\overline{D} = (D,(g_v)_{v\in \Sigma_K}) \in \widehat{\Div}(X)$ and let $s =(\phi, D_{K'}) \in \Gamma(X_{K'}, D_{K'})^\times$, where $K'$ is a finite extension of $K$. Let $w \in \Sigma_{K'}$ and $\sigma_w \colon K'_w \hookrightarrow \bC_v$ be a $K_v$-embedding, where $v$ is the restriction of $w$ to $K$. We denote by $\widetilde{\sigma}_w \colon X_{\bC_v} \rightarrow X_{K'}$ the morphism induced by $\sigma_w$. 
 The function 
\[z \in X_{\bC_v}^{\mathrm{an}} \setminus \Supp(\widetilde{\sigma}_w^*\divi(s))^{\mathrm{an}} \mapsto \|s(z)\|_w :=  |\widetilde{\sigma}_w^*\phi(z)|_v\exp(-g_v(z)/2) \in \bR\] 
extends to $X_{\bC_v}^{\mathrm{an}}$ and does not depend on the choice of $\sigma_w$. We let $\|s\|_{w,\sup} = \sup_{z\in X_{\bC_v}^{\mathrm{an}}} \|s(z)\|_w$ and  we define the \emph{height of $s$ with respect to $\overline{D}$} as 
\[h_{\overline{D}}(s) = \sum_{w \in \Sigma_{K'}}n_w(K') \ln\|s\|_{w,\sup}.\] 
This definition is invariant by finite field extension. In particular,  we can define the height $h_{\overline{D}}(s)$ of any global section $s \in \Gamma(X_{\overline{K}},D_{\overline{K}})^\times$.

\subsection{Cauchy's inequality for global sections}\label{paragCauchyineq}  

Given a closed point $x \in X_{\overline{K}}$ and an adelic Cartier divisor $\overline{D} \in \widehat{\Div}(X)$, it follows from the definitions that $\widehat{h}_{\overline{D}}(x) \geq -h_{m\overline{D}}(s)/m$ for any integer $m\geq 1$ and any global section $s \in \Gamma(X_{\overline{K}},mD_{\overline{K}})^\times$ such that $s(x) \ne 0$. It is natural to ask whether a similar result holds even if $s$ vanishes at $x$. The following proposition gives an answer to this question, that will be central in our proof of Theorem \ref{thmnefintro}.
\begin{prop}\label{propCauchy}  Let $\overline{D} = (D, (g_v)_{v \in \Sigma_K}) \in \widehat{\Div}(X)$ be an adelic Cartier divisor, and assume that $D$ is ample. Let $x \in X_{\overline{K}}$ be a regular closed point and let $\varepsilon > 0$ be a real number. There exists a real number $\rho(\overline{D},x,\varepsilon)$ such that
\begin{equation}\label{eqkeyineq}
 \widehat{h}_{\overline{D}}(x) + \varepsilon \geq - \frac{1}{m} ( h_{m\overline{D}}(s) + \rho(\overline{D},x,\varepsilon)\ord_x s)
\end{equation}
 for any positive integer $m$ and for any $s \in \Gamma(X_{\overline{K}}, m D_{\overline{K}})^\times$.
\end{prop}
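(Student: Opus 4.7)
The plan is to realize the three ingredients outlined in §\ref{paragideaproof}: extraction of a non-vanishing ``leading Taylor coefficient'' of $s$ at $x$, an adelic Cauchy inequality at each place, and a finiteness argument reducing the error terms to finitely many ``bad'' places. Upon replacing $K$ by a finite extension containing the residue field of $x$ (under which both sides of \eqref{eqkeyineq} are invariant by the normalization of the weights $n_v$), one may assume $x \in X(K)$; any individual $s$ will then be treated over a further finite extension whose places lie above those of $K$. Fix a system of parameters $\mathbf{z} = (z_1, \ldots, z_d)$ at $x$ on an affine neighborhood $U \subseteq X$ and a trivialization $s_0 \in \Gamma(U, \cO_X(D))$. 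For a non-zero $s$ of order $r = \ord_x s$, write $s_{|U} = f \cdot s_0^{\otimes m}$ and pick $\bm{\alpha} \in \bN^d$ with $|\bm{\alpha}| = r$ and $c := \bm{\partial}_{\mathbf{z}}^{\bm{\alpha}} f(x) \ne 0$. Then $\tau := c \cdot s_0(x)^{\otimes m}$ is a non-vanishing element of the fiber of $\cO_X(mD)$ at $x$, and
\[
m \, \widehat{h}_{\overline{D}}(x) = -\sum_{v \in \Sigma_K} n_v(K) \ln \|\tau\|_v.
\]

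For each $v \in \Sigma_K$ and each small $\rho_v > 0$, consider the polydisc $P_v(\rho_v) \subseteq X_{\bC_v}^{\mathrm{an}}$ around $x$ cut out by $|z_i(\xi)|_v \le \rho_v$ for $i = 1, \ldots, d$. The classical several-variable Cauchy inequality (for archimedean $v$) and its non-archimedean Berkovich analog via the Gauss point of the polydisc yield uniformly $|c|_v \le \rho_v^{-r} \sup_{\xi \in P_v(\rho_v)} |f(\xi)|_v$. Setting
\[
\alpha_v(\rho_v) := \sup_{\xi \in P_v(\rho_v)} \bigl\|s_0(x)\bigr\|_v \big/ \bigl\|s_0(\xi)\bigr\|_v,
\]
multiplying through by $\|s_0(x)\|_v^m$ and using $\|s(\xi)\|_v = |f(\xi)|_v \|s_0(\xi)\|_v^m$ gives the adelic Cauchy bound $\|\tau\|_v \le \alpha_v(\rho_v)^m \rho_v^{-r} \|s\|_{v,\sup}$. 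Since $s_0(x) \ne 0$ and the metric is continuous at $x$, one has $\alpha_v(\rho_v) \to 1$ as $\rho_v \to 0$. Crucially, by Definition \ref{defiadelicCartier} there is a model $(\mathcal{X}, \mathcal{D})$ of $(X, D)$ over a dense open $U_0 \subseteq \mathcal{S}$ with $g_v = g_{\mathcal{D}, v}$ on $U_0$; shrinking $U_0$ so that $s_0$ and $\mathbf{z}$ extend to a trivialization and a system of parameters on a neighborhood of the $\mathcal{S}$-closure of $x$, the model metric satisfies $\|s_0\|_v \equiv 1$ on $P_v(1)$ for all $v \in U_0$ outside a further finite set. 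Let $\mathcal{P} \subseteq \Sigma_K$ denote the (finite) set of remaining places; for $v \notin \mathcal{P}$ one may take $\rho_v = 1$ with $\alpha_v(1) = 1$.

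Taking logarithms of the adelic Cauchy bound, weighting by $n_v(K)$ and summing over all $v$ (so that the contributions from $v \notin \mathcal{P}$ collapse into $-h_{m\overline{D}}(s)$) yields
\[
m \, \widehat{h}_{\overline{D}}(x) \;\ge\; -h_{m\overline{D}}(s) - r \sum_{v \in \mathcal{P}} n_v(K) \ln(1/\rho_v) - m \sum_{v \in \mathcal{P}} n_v(K) \ln \alpha_v(\rho_v).
\]
Dividing by $m$ and choosing each $\rho_v$ ($v \in \mathcal{P}$) small enough to force $-\sum_{v \in \mathcal{P}} n_v(K) \ln \alpha_v(\rho_v) \le \varepsilon$ (possible by $\alpha_v(\rho_v) \to 1$ and finiteness of $\mathcal{P}$) produces the claim with $\rho(\overline{D}, x, \varepsilon) = \sum_{v \in \mathcal{P}} n_v(K) \ln(1/\rho_v)$, depending only on $\overline{D}$, $x$, and $\varepsilon$. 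The principal technical obstacle is the non-archimedean Cauchy inequality: one must set up a Berkovich analytic chart at $x$ in which ``leading Taylor coefficient'' and ``supremum over a polydisc of radius $\rho_v$'' are compatible with the adelic setup, and one must verify that $\|s_0\|_v \equiv 1$ on $P_v(1)$ for almost all $v$, which relies on the interplay between the chosen trivialization $s_0$, the parameters $\mathbf{z}$, and the global integral model.
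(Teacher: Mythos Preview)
Your outline follows exactly the strategy of the paper: extract a leading Taylor coefficient, bound it at each place by a Cauchy-type inequality on a polydisc of radius $\rho_v$, control the multiplicative error $\alpha_v(\rho_v)$ coming from the metric, and use the integral model to reduce to finitely many bad places. The final summation and the choice of $\rho(\overline{D},x,\varepsilon)$ are identical to what the paper does.

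The one point where your sketch is genuinely incomplete is the one you yourself flag as the ``principal technical obstacle''. The set $P_v(\rho_v)=\{|z_i|\le\rho_v\}\subseteq X_{\bC_v}^{\mathrm{an}}$ is \emph{a priori} only the preimage of a polydisc under the map $(z_1,\dots,z_d)$; to apply Cauchy's inequality you need $f$ to be an honest convergent power series in the $z_i$ on this region, and for the finiteness step you need this to hold with $\rho_v=1$ for almost all $v$. Your appeal to ``$\mathbf z$ extending to a system of parameters along the $\mathcal S$-closure of $x$'' does not by itself produce such an analytic chart. The paper resolves this concretely: it replaces $D$ by a very ample multiple, uses Bertini to obtain a finite morphism $q_X\colon X\to\bP^d_K$ \'etale at $x$ given by homogeneous coordinates, and takes $z_i=T_i/T_0-x_i$ and $s_0=\varphi^*T_0$. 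The polydisc then lives in $\bC_v^d$, where Cauchy's inequality is the standard one; the passage back to $X$ is through an analytic section $\sigma_v\colon\mathbb D_v(\rho_v)\to U_{\bC_v}(\bC_v)$ of $q_v$ supplied by Vojta's multivariable Hensel lemma, which moreover yields $\rho_v=1$ for all $v$ outside a finite set (since $\det J_f(x)\in K^\times$). Claim~\ref{claimtriv} in the paper (using the reduction map on the model over $\bP^d$) is what gives $\|s_0\|_v\equiv 1$ on the radius-$1$ polydisc for almost all $v$. So the paper's Bertini+Hensel construction is precisely the missing ingredient in your sketch; once you plug it in, your argument and the paper's coincide.
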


\begin{proof} We assume that $x \in X(K)$ is a $K$-rational point without loss of generality.
After possibly replacing $D$ by a suitable multiple, we also assume that $\cO_X(D)$ is very ample.  Let $\varphi \colon X \hookrightarrow \bP^N_K$ be a closed immersion such that $\cO_X(D) = \varphi^*\cO_{\bP^N_K}(1)$. By Bertini's theorem \cite[Example 5.2.14]{LazI}, there exist homogeneous coordinates $(T_0, \ldots, T_N)$ on $\bP^N_K$ such that $x \notin U := X\setminus \divi(T_0)$ and  the projection on the first $d+1$ coordinates induces a well-defined finite morphism $q_X  \colon X \rightarrow \bP^d_K$, étale at $x$.  We denote by $q \colon U(\overline{K}) \rightarrow \overline{K}^d$ the restriction of $q_X$ to $U(\overline{K})$, and we write $q(x) = (x_1, \ldots, x_d) \in K^d$. For any $v \in \Sigma_K$, $q$ induces a morphism $q_v \colon U_{\bC_v}(\bC_v) \rightarrow \bC_v^d$, and given a real number $\rho > 0$ we let 
\[\mathbb{D}_v(\rho) = \{z = (z_1, \ldots, z_d) \in \bC_v^d \ | \ \max_{1 \leq i \leq d} |z_i - x_i|_v < \rho\}.\]
 Moreover,  we let $s_i = \varphi^*T_i \in \Gamma(X,D)$ for each $i \in \{0, \ldots, d\}$.  Let us prove the following claim.
\begin{claim}\label{claimtriv} There exists a finite subset $\mathcal{P}_1 \subset \Sigma_K$ such that $\|s_0(y)\|_v = 1$ for every $v \in \Sigma_K \setminus \mathcal{P}_1$ and every $y \in q_v^{-1}(\mathbb{D}_v(1))$.
\end{claim}
\begin{proof}
 Let $(\mathcal{X}, \mathcal{D})$ be a model of $(X,D)$ over a dense open subset $\mathcal{U} \subseteq \mathcal{S}$ such that $g_v = g_{\mathcal{D},v}$ for every $v \in \mathcal{U}$. After possibly shrinking $\mathcal{U}$, there exist global sections $\mathfrak{s}_0, \ldots, \mathfrak{s}_d \in \Gamma(\mathcal{X}, \mathcal{D})$ such that ${\mathfrak{s}_i}_{|X} = s_i$ for every $i \in \{0, \ldots, d\}$ and $\cap_{i=0}^d \divi(\mathfrak{s}_i) = \emptyset$.   The sections $\mathfrak{s}_0, \ldots, \mathfrak{s}_d$ induce a morphism $q_{\mathcal{X}} \colon \mathcal{X} \rightarrow \mathbb{P}^d_U$ that extends $q_X \colon X \rightarrow \bP^d_K$. Let $v \in \mathcal{U}$ and let $q_{X,v} \colon X_{\bC_v} \rightarrow \bP^d_{\bC_v}$ be the base change of $q_X\colon X \rightarrow \bP^d_K$. We denote by $\bC_v^{\circ}$ the valuation ring of $\bC_v$ and by $\bC_v^{\circ\circ}$ its maximal ideal. For any $i \in \{0, \ldots, d\}$,    we let $\mathfrak{s}_{i,v} \in \Gamma(\mathcal{X}_{\bC_v^\circ / \bC_v^{\circ \circ}}, \mathcal{D}_{\bC_v^\circ / \bC_v^{\circ \circ}})$ be the pullback of $\mathfrak{s}_i$ to $\mathcal{X}_{\bC_v^\circ / \bC_v^{\circ \circ}} = \mathcal{X} \times_{\mathcal{U}} \Spec(\bC_v^\circ / \bC_v^{\circ \circ})$. We have a commutative diagram
  \[ \begin{tikzcd}[column sep=large]
X_{\bC_v}  \arrow{d}{q_{X,v}} \arrow{r}{\mathrm{red}_{X,v}} & \mathcal{X}_{\bC_v^\circ / \bC_v^{\circ \circ}} \arrow{d}{q_{\mathcal{X},v}}   \\
\bP^d_{\bC_v}  \arrow{r}{\mathrm{red}_{\bP^d,v}}& \bP^d_{\bC_v^\circ / \bC_v^{\circ \circ}},
\end{tikzcd}
\] 
 where $ \mathrm{red}_{X,v}$, $\mathrm{red}_{\bP^d,v}$ are the reduction maps (see \cite[section 1.3]{MoriwakiMAMS}) and $q_{\mathcal{X},v}$ is the morphism given by $z \mapsto (\mathfrak{s}_{0,v}(z) : \cdots :  \mathfrak{s}_{d,v}(z))$.
   Let $y \in q_v^{-1}(\mathbb{D}_v(1))$. Then by definition of $\mathrm{red}_{\bP^d,v}$ we have $\mathrm{red}_{\bP^d,v}(q_{X,v}(y)) = (1 : y_{1,v} : \cdots : y_{d,v})$ for some $(y_{1,v}, \ldots, y_{d,v}) \in (\bC^\circ / \bC_v^{\circ \circ})^d$. On the other hand, 
\[\mathrm{red}_{\bP^d,v}(q_{X,v}(y)) = q_{\mathcal{X},v}(\mathrm{red}_{X,v}(y)) = (\mathfrak{s}_{0,v}(\mathrm{red}_{X,v}(y)) : \cdots :  \mathfrak{s}_{d,v}(\mathrm{red}_{X,v}(y)))\]   
and therefore  $\mathfrak{s}_{0,v}(\mathrm{red}_{X,v}(y)) \ne 0$. Since $g_v = g_{\mathcal{D},v}$, it follows that $\|s_0(y)\|_v = 1$ by definition of $g_{\mathcal{D},v}$ (see \cite[section 0.2]{MoriwakiMAMS}). The claim follows by taking $\mathcal{P}_1 = \Sigma_K \setminus (\mathcal{U} \cup \Sigma_{K,\infty})$. 
  \end{proof} 
  
  We also need the following claim, which is a direct consequence of a multivariable version of Hensel's lemma due to Vojta \cite[Corollary 15.13]{Vojta}.

\begin{claim}\label{claimsection} There exists a collection of positive real numbers $(\rho_v)_{v\in \Sigma_K}$ and a finite set $\mathcal{P}_2 \subset \Sigma_K$ such that 
\begin{enumerate}
\item\label{itemsection} for every $v \in \Sigma_K$, the map $q_v$ admits an analytic section $\sigma_v \colon \mathbb{D}_v(\rho_v) \rightarrow U_{\bC_v}(\bC_v)$ on $\mathbb{D}_v(\rho_v)$, and
\item $\rho_v =1$ for every $v \in \Sigma_K \setminus \mathcal{P}_2$.
\end{enumerate}
\end{claim}
\begin{proof}  Since $q$ is étale at $x$, there exist an integer $n \in \bN$ and polynomials $f_1, \ldots, f_n \in K[T_1, \ldots, T_d, S_1, \ldots, S_n]$ such that 
\[U \cong \Spec \left( \frac{K[T_1, \ldots, T_d,S_1, \ldots, S_n]}{(f_1, \ldots, f_n)} \right),\]
and \[J_f(x) := ((\partial f_j/\partial S_j)(x))_{1 \leq i,j \leq n} \in \mathrm{GL}_n(K).\] 
  By the multivariable Hensel's lemma \cite[Corollary 15.13]{Vojta}, for every place $v \in \Sigma_K$ there exists a real number $\rho_v > 0$ together with an analytic section 
 \[\sigma_v \colon \mathbb{D}_v(\rho_v) \longrightarrow U_{\bC_v}(\bC_v)\]
  of $q_v$, and moreover we can take  $\rho_v = |\det J_f(x) |_v^2$ for all except finitely many places $v$ (note that the proof of \cite[Corollary 15.13]{Vojta} remains valid in the function field case, since \cite[Corollary 15.3]{Vojta} deals with an arbitrary non-Archimedean valued field). 
   Since $\det J_f(x) \in K^{\times}$, there exists a finite set $\mathcal{P}_2 \subset \Sigma_K$ such that $\rho_v =1$ for every $v \in \Sigma_K \setminus \mathcal{P}_2$. 
 
\end{proof}

Let $\mathcal{P}_1, \mathcal{P}_2 \subset \Sigma_K$ and $(\rho_v)_{v\in \Sigma_K}$  be as in Claims  \ref{claimtriv}, \ref{claimsection} and let $\mathcal{P} = \mathcal{P}_1 \cup \mathcal{P}_2$. For every $v \in \Sigma_K$ and $\rho \in (0, \rho_v]$, we let $\mathbb{B}_v(\rho) = \sigma_v(\mathbb{D}_v(\rho))$ and 
\[\alpha_v(\rho) = \sup_{y \in \mathbb{B}_v(\rho)} \frac{\|s_0(x)\|_v}{\|s_0(y)\|_v}.\]
By Claims \ref{claimtriv} and \ref{claimsection} \eqref{itemsection}, we have $\rho_v = 1$ and $\alpha_v(1) = 1$ for every $v \notin \mathcal{P}$. Let $\varepsilon > 0$.  By continuity of the metrics, for every $v \in \mathcal{P}$ there exists a positive real number $\rho_{\varepsilon,v} \leq \rho_v$ such that $\alpha_v(\rho_{\varepsilon,v}) \leq \exp(\varepsilon/(n_v(K)\#\mathcal{P}))$. For every $v \notin \mathcal{P}$, we  put $\rho_{\varepsilon,v} = 1$. We have 
\[\sum_{v \in \Sigma_K} n_v(K) \ln \alpha_v(\rho_{\varepsilon,v}) = \sum_{v \in \mathcal{P}} n_v(K) \ln \alpha_v(\rho_{\varepsilon,v})  \leq \varepsilon, \]
and we define \[\rho(\overline{D},x,\varepsilon) = - \sum_{v \in \Sigma_K} n_v(K) \ln \rho_{\varepsilon,v}= -\sum_{v \in \mathcal{P}} n_v(K) \ln \rho_{\varepsilon,v} \in \bR.\]

Let $m \geq 1$ be an integer and let $s \in \Gamma(X_{\overline{K}}, m D_{\overline{K}})^\times$. In order to prove \eqref{eqkeyineq}, we assume that $s \in \Gamma(X ,mD)^\times$ is defined over $K$ without loss of generality. For every $i \in \{1, \ldots, d\}$, we let $z_i = s_i/s_0 - x_i \in \cO_X(U)$ (recall that $s_i = \varphi^*T_i$). By construction,  $ \mathbf{z} = (z_1, \ldots, z_d)$ is a system of parameters at $x$; by a slight abuse of notation, we also consider it as a system of parameters at $q(x)$. Let $f = s/s_0^m \in \cO_X(U)$.  By definition of $\ord_x s$, there exists $\bm{\alpha} \in \bN^d$ with $|\bm{\alpha}| = \ord_x s$ such that $ \bm{\partial}_{\mathbf{z}}^{\bm{\alpha}} f (x) \ne 0$.
Let $v \in \Sigma_K$ and let $f_v \in \cO_{X_{\bC_v}}(U_{\bC_v})$ be the pull-back of $f$ to $U_{\bC_v}$.  By Cauchy's inequality in several variables (see \cite[§~5.1.3, Proposition 3]{BGR} in the non-Archimedean case), we have 
\begin{align*}
|\bm{\partial}_{\mathbf{z}}^{\bm{\alpha}} f(x)|_v = |\bm{\partial}_{\mathbf{z}}^{\bm{\alpha}} (\sigma_v^*f_{v})(q_v(x))|_v & \leq \frac{\sup_{z \in \mathbb{D}_v(\rho_{\varepsilon,v})} | \sigma_v^*f_v(z)|_v}{\rho_{\varepsilon,v}^{\ord_x s}}\\
&   = \frac{\sup_{z \in \mathbb{B}_v(\rho_{\varepsilon,v})} | f_{v}(z)|_v}{\rho_{\varepsilon,v}^{\ord_x s}},
\end{align*}  
where $\sigma_v \colon \mathbb{D}_v(\rho_v) \rightarrow U_{\bC_v}(\bC_v)$ is the section given by Claim \ref{claimsection} \eqref{itemsection}. Therefore 
\[\|s_0(x) \|_v^m |\bm{\partial}_{\mathbf{z}}^{\bm{\alpha}} f(x)|_v \leq \alpha_v(\rho_{\varepsilon,v})^m\frac{\sup_{z \in \mathbb{B}_v(\rho_{\varepsilon,v})} \|s(z)\|_v}{\rho^{\ord_x s}} \leq  \alpha_v(\rho_{\varepsilon,v})^m\frac{\|s\|_{v,\sup}}{\rho_{\varepsilon,v}^{\ord_x s}}\]
for any $v \in \Sigma_K$. Since $ \bm{\partial}_{\mathbf{z}}^{\bm{\alpha}} f (x) \ne 0$, we have
 \begin{align*}
 m\widehat{h}_{\overline{D}}(x)& =  - \sum_{v \in \Sigma_K} n_v(K)\ln( \|s_0(x) \|_v^m |\bm{\partial}_{\mathbf{z}}^{\bm{\alpha}} f(x)|_v)  \\
  & \geq  -  h_{m\overline{D}}(s) - \rho(\overline{D},x,\varepsilon)\ord_x s - m\sum_{v \in \Sigma_K} n_v(K) \ln \alpha_v(\rho_{\varepsilon,v}) \\
  & \geq  -  h_{m\overline{D}}(s) - \rho(\overline{D},x,\varepsilon)\ord_x s - m\varepsilon
 \end{align*}
 and we are done.

\end{proof}

 \section{Concave transform and arithmetic Okounkov bodies}\label{sectionBCarithmNO}

 We briefly recall the construction of geometric Okounkov bodies in subsection \ref{paragNO}, following \cite{LazMus}, \cite[§~7.3]{MoriwakiMAMS} and \cite{BoucksomOkounkov}. We then define  the arithmetic concave transform (§~\ref{paragconcave}) introduced by Boucksom and Chen \cite{BC}. The only new material in this section is contained in §§~\ref{paraggeneralconc} and \ref{paragarithmNO}, where we generalize the construction of the concave transform and the arithmetic Okounkov bodies from \cite{BC} to arbitrary adelic $\bR$-Cartier divisors (without assuming the underlying Cartier divisor to be big). 
 
\subsection{Okounkov bodies of big Cartier divisors}\label{paragNO}

We denote by $\leq_{\mathrm{lex}}$ the lexicographic order on $\bZ^d$.
\begin{defi}\label{defivaluation} A valuation of maximal rank $\nu$ on $\Rat(X_{\overline{K}})$ is a surjective map $\nu \colon \Rat(X_{\overline{K}})^\times \rightarrow \bZ^d$ satisfying the following three conditions:
\begin{itemize}
\item $\nu(fg) = \nu(f)+\nu(g)$ for every $f,g \in \Rat(X_{\overline{K}})^\times$,
\item $\min \{\nu(f), \nu(g) \} \leq_{\mathrm{lex}} \nu(f+g)$ for every $f,g \in \Rat(X_{\overline{K}})^\times$ with $f+g \ne 0$,
\item $\nu(a) = 0_{\bZ^d}$ for every $a \in \overline{K}^\times$.
\end{itemize}
We denote by $\mathcal{V}(X_{\overline{K}})$ the set of valuations of maximal rank on $\Rat(X_{\overline{K}})^\times$. For any $\nu \in \mathcal{V}(X_{\overline{K}})$, we let $\nu(0) = \infty$, with the convention that $\bm{\alpha} <_{\mathrm{lex}} \infty$ for every $\bm{\alpha} \in \bZ^d$.
\end{defi}

Let $\nu \in \mathcal{V}(X_{\overline{K}})$. We consider the sets 
\[\cO_{\nu} = \{ f \in \Rat(X_{\overline{K}}) \ | \ \nu(f) \geq_{\mathrm{lex}} 0_{\bZ^d}\} \ \text{ and } \ \mathfrak{m}_{\nu} = \{ f \in \cO_\nu \ |  \nu(f) >_{\mathrm{lex}}0_{\bZ^d}\}. \]
 By the valuative criterion for properness, there exists a unique schematic point $c_X({\nu}) \in X_{\overline{K}}$ such that 
 \[\cO_{X_{\overline{K}},c_{X}(\nu)} \subseteq \cO_{\nu} \ \text{ and } \ \mathfrak{m}_{X_{\overline{K}},c_{X}(\nu)}  = \cO_{X_{\overline{K}},c_X(\nu)} \cap \mathfrak{m}_{\nu},\] where $\mathfrak{m}_{X_{\overline{K}},c_{X}(\nu)}$ denotes the maximal ideal of $\cO_{X_{\overline{K}},c_{X}(\nu)}$.
 We call $c_{X}(\nu)$ the center of $\nu$.  By \cite[Remark 2.25]{BoucksomOkounkov}, we have  $\cO_{\nu}/\mathfrak{m}_{\nu} = \overline{K}$. In particular, $c_{X}(\nu) \in X_{\overline{K}}$ is a closed point. Moreover, for every closed point $x \in X_{\overline{K}}$ there exists a valuation of maximal rank $\nu \in \mathcal{V}(X_{\overline{K}})$ with center $c_{X}(\nu) = x$ (see \cite[Chap. VI, § 16, Theorem 37, page 106]{ZS}).

\begin{example}\label{examplevaladm}  Let $Y_\bullet$ be an admissible flag on $X_{\overline{K}}$, that is a sequence 
\[Y_\bullet\colon X_{\overline{K}} = Y_0 \supsetneq Y_1 \supsetneq \cdots \supsetneq Y_{d-1} \supsetneq Y_{d} = \{x\}\]
such that for every $i \in \{0, \ldots, d\}$, $Y_i \subseteq X_{\overline{K}}$ is a subvariety of codimension $i$, smooth at $x$. We define a valuation of maximal rank $\nu_{Y_\bullet} \in \mathcal{V}(X_{\overline{K}})$ as follows. For each $i \in \{1, \ldots, d-1\}$, we choose a local equation $\omega_i \in \cO_{Y_i, x}$ of $Y_{i+1}$, and for every $f \in \Rat(X_{\overline{K}})^\times$ we let 
\[\nu_{Y_\bullet}(f)= (\ord_{Y_1}(f_0), \ord_{Y_2}(f_1), \ldots, \ord_{Y_d}(f_d)),\]
where the $f_i  \in \Rat(Y_i)^\times$ are defined inductively by $f_0 = f$ and 
\[f_{i+1} = (\omega_{i+1}^{-\ord_{Y_{i+1}}(f_i)} f_i)_{|Y_{i+1}}.\]
Note that $c_X(\nu_{Y_\bullet}) = x \in X_{\overline{K}}$. 
\end{example} 

\begin{example}\label{examplevalinf} Let $x \in X_{\overline{K}}$ be a smooth closed point and let $\widetilde{X} \rightarrow X$ be the blow-up of $X_{\overline{K}}$ at $x$, with exceptional divisor $E$. An infinitesimal flag over $x\in X_{\overline{K}}$ is an admissible flag on $\widetilde{X}$
\[\widetilde{Y}_\bullet\colon \ \widetilde{X} = \widetilde{Y}_0 \supsetneq \widetilde{Y}_1 \supsetneq \cdots \supsetneq \widetilde{Y}_{d-1} \supsetneq \widetilde{Y}_{d}\]
such that  $E =  \widetilde{Y}_1$ and for each $i \in \{2, \ldots, d\}$, $\widetilde{Y}_i$ is an $(d-i)$-dimensional linear subspace of $E   \simeq \bP^{d-1}_{\overline{K}}$ for every $i \in \{1, \ldots, d\}$. We denote by $\nu_{\widetilde{Y}_\bullet} \in \mathcal{V}(X_{\overline{K}})$  the valuation of maximal rank on $\Rat(X_{\overline{K}}) = \Rat(\widetilde{X})$ constructed in Example \ref{examplevaladm}. We have   $c_X(\nu_{\widetilde{Y}_\bullet}) = x$.
\end{example}
In the rest of this section, we fix a valuation of maximal rank $\nu \in \mathcal{V}(X_{\overline{K}})$. 
 For a Cartier divisor $D \in \Div(X_{\overline{K}})$ on $X_{\overline{K}}$, 
 we define $\nu(D) = \nu(f)$, where $f \in \Rat(X_{\overline{K}})^\times$ is any rational function defining $D$ around $c_X(\nu) \in X_{\overline{K}}$. Note that this definition does not depend on the choice of $f$ since $\nu(g) = 0$ for every $g \in \cO_{X_{\overline{K}},c_X(\nu)}^\times$. We obtain a map $ \Div(X_{\overline{K}}) \rightarrow \bZ^d$,
that extends to a map \[\nu \colon \Div(X_{\overline{K}})_{\bR} \rightarrow \bR^d\] after tensoring by $\bR$. By construction, we have 
\begin{itemize}
\item $\nu(\lambda D) = \lambda \nu(D)$ for any $D \in \Div(X)_\bR$ and any $\lambda \in \bR$,
\item  $\nu(D_1) + \nu(D_2) = \nu(D_1 + D_2)$ for any  $D_1,D_2 \in \Div(X)_\bR$,
\end{itemize}
Given a divisor $D \in \Div(X_{\overline{K}})_\bR$ and a global section $s =(D,\phi) \in \Gamma(X_{\overline{K}},D)$, we write $\nu(s) = \nu(\divi(s)) = \nu(D +(\phi))$.

Let $D \in \Div(X)_\bR$ be a big $\bR$-Cartier divisor, and let $V_{\bullet}(D) := \bigoplus_{n \in \bN} \Gamma(X,nD)$. Given a graded $K$-subalgebra $W_\bullet = \bigoplus_{n \in \bN} W_n \subseteq V_\bullet(D)$, we consider the subset $\Gamma_{\nu}(W_\bullet)$ of $\bR^d$ defined by 
\[\Gamma_{\nu}(W_\bullet) = \left\{\frac{\nu(s)}{n} \ | \ n \geq 1, \  s\in W_n \otimes_K \overline{K} \setminus \{0\} \right\}. \]

\begin{defi} The Okounkov body of $W_\bullet$ with respect to $\nu$ is the closure
\[\Delta_{\nu}(W_\bullet) := \overline{\Gamma_{\nu}(W_\bullet)} \]
of $\Gamma_{\nu}(W_\bullet)$ in $\bR^d$ for the Euclidean topology. When $W_\bullet = V_{\bullet}(D)$, $\Delta_{\nu}(D) := \Delta_{\nu}(V_\bullet(D))$ is called the Okounkov body of $D$ with respect to $\nu$. 
\end{defi}

We define the volume of $W_\bullet$ as 
 \[\vol(W_\bullet) = \limsup_{n \rightarrow \infty} \frac{\dim W_n}{n^d/d!}.\]
 Following \cite[Definition 1.1]{BC}, we say that $W_\bullet$ contains an ample series if
 \begin{itemize}
 \item $W_n \ne 0$ for every $n \gg 1$, and 
 \item there exist an integer $\ell \geq 1$ and an ample $\bQ$-Cartier divisor $A \in \Div(X)_\bQ$ such that $A \leq D$ and $H^0(X,n\ell A) \subseteq W_{n\ell}$ for every $n \geq 1$.
 \end{itemize}
 In that case, we have 
  \[\mu_{\bR^d}(\Delta_{\nu}(W_\bullet)) = \vol(W_\bullet)/d! =  \lim_{n \rightarrow \infty} \frac{\dim W_n}{n!}\]
  by \cite[Theorem 2.13]{LazMus}, where $\mu_{\bR^d}$ is the Lebesgue measure on $\bR^d$. Note that $V_\bullet(D)$ contains an ample series since $D$ is big. In particular, $\Delta_{\nu}(D) \subset \bR^d$ is a convex body and  $\vol(D) = d! \mu_{\bR^d}(\Delta_{\nu}(D))$.

\subsection{The arithmetic concave transform}\label{paragconcave} Let $\overline{D} = (D, (g_v)_{v \in \Sigma_K}) \in \Div(X)_\bR$ be an adelic $\bR$-Cartier divisor on $X$ such that $D$ is big. Let $\overline{\xi} = (0,(\xi_v)_{v \in \Sigma_K}) \in \widehat{\Div}(X)$ be such that $\widehat{\deg}(\overline{\xi}) = 1$ (see Example \ref{exampledeg}).  For any $n \in \bN\setminus \{0\}$ and $t \in \bR$, we denote by $V_n^t(\overline{D}) \subseteq \Gamma(X,nD)$ the $K$-linear subspace generated by  $\widehat{\Gamma}(X, n\overline{D} - t\overline{\xi})$. 
 We let $V^t_\bullet(\overline{D}) \subseteq V_\bullet(D)$ be the graded $K$-subalgebra defined by \[V^t_\bullet(\overline{D})= \bigoplus_{n\in \bN} V^{nt}_n(\overline{D}).\] 
\begin{defi} The concave transform of $\overline{D}$ with respect to $\nu$ is the function $G_{\overline{D}, \nu} \colon \Delta_{\nu}(D) \rightarrow \bR \cup \{-\infty\}$ given by 
\[G_{\overline{D},\nu}(\alpha) = \sup \{t \in \bR \ | \ \alpha \in \Delta_{\nu}(V_\bullet^t(\overline{D}))\}\]
for every $\alpha \in \Delta_{\nu}(D)$.
\end{defi}
By \cite[§~1.3]{BC}, $G_{\overline{D}, \nu}$ is an upper semi-continuous concave function, and it is continuous on the interior of $\Delta_{\nu}(D)$. 
\begin{lemma}\label{lemmainfG}  We have
\[\inf_{\alpha \in \Delta_{\nu}(D)} G_{\overline{D}, \nu}(\alpha)  = \sup \{t \in \bR \ | \ \vol(D) = \vol(V_{\bullet}^t(\overline{D}))\} \in \bR \cup \{-\infty\}.\]
In particular, $\inf_{\Delta_{\nu}(D)} G_{\overline{D}, \nu}$ does not depend on $\nu$.
\end{lemma}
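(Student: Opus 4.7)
The plan is to insert an intermediate quantity
\[
\tau := \sup\{t \in \bR \mid \Delta_{\nu}(V_\bullet^t(\overline{D})) = \Delta_{\nu}(D)\}
\]
between the two sides of the claimed identity, and to show that both equal $\tau$.

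For the equality $\tau = \inf_{\alpha \in \Delta_\nu(D)} G_{\overline{D},\nu}(\alpha)$, my starting point is that the family $V_\bullet^t(\overline{D})$ is non-increasing in $t$ (a larger $t$ imposes a stricter condition on the sup-norms of small sections), and hence so is the family $\Delta_\nu(V_\bullet^t(\overline{D})) \subseteq \Delta_\nu(D)$. Combined with the definition $G_{\overline{D},\nu}(\alpha) = \sup\{t \mid \alpha \in \Delta_\nu(V_\bullet^t(\overline{D}))\}$, this monotonicity yields the double inclusion $\{G_{\overline{D},\nu} > t\} \subseteq \Delta_\nu(V_\bullet^t(\overline{D})) \subseteq \{G_{\overline{D},\nu} \geq t\}$ for every $t \in \bR$. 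For $t < \tau$, the middle set is all of $\Delta_\nu(D)$, which forces $G_{\overline{D},\nu}(\alpha) \geq t$ everywhere, hence $\inf G_{\overline{D},\nu} \geq \tau$. Conversely, for $t > \tau$, one may pick $\alpha \in \Delta_\nu(D) \setminus \Delta_\nu(V_\bullet^t(\overline{D}))$ and read off $G_{\overline{D},\nu}(\alpha) \leq t$, giving $\inf G_{\overline{D},\nu} \leq \tau$.

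For the equality $\tau = \sup\{t \mid \vol(V_\bullet^t(\overline{D})) = \vol(D)\}$, I would combine two ingredients. First, since $\Delta_\nu(D)$ is a convex body with nonempty interior, any closed convex subset of $\Delta_\nu(D)$ sharing the same positive Lebesgue measure must coincide with $\Delta_\nu(D)$ itself (any point of $\Delta_\nu(D)$ outside a proper closed convex subset can be perturbed into the interior, producing a ball of positive measure in the complement). Second, whenever $V_\bullet^t(\overline{D})$ contains an ample series, the Boucksom--Chen identity $\vol(V_\bullet^t(\overline{D})) = d!\,\mu_{\bR^d}(\Delta_\nu(V_\bullet^t(\overline{D})))$ holds; and by \cite{BC} the graded subalgebras $V_\bullet^t(\overline{D})$ do contain an ample series as soon as their volume (equivalently, the Lebesgue measure of their Okounkov body) is positive. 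Together, these imply that the conditions $\Delta_\nu(V_\bullet^t(\overline{D})) = \Delta_\nu(D)$ and $\vol(V_\bullet^t(\overline{D})) = \vol(D)$ are equivalent, proving $\tau$ equals the right-hand side. The final assertion that $\inf_{\Delta_\nu(D)} G_{\overline{D},\nu}$ does not depend on $\nu$ is then automatic, since the right-hand side makes no reference to $\nu$.

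The main obstacle in this plan is the second step: one has to invoke the precise form of Boucksom and Chen's results guaranteeing that $V_\bullet^t(\overline{D})$ contains an ample series throughout the range of $t$ where its Okounkov body has positive volume, so that the translation between $\vol(V_\bullet^t(\overline D))$ and $\mu_{\bR^d}(\Delta_\nu(V_\bullet^t(\overline D)))$ is valid uniformly. The remaining ingredients---the monotonicity of the filtration $(V_\bullet^t(\overline D))_t$, the characterization of the super-level sets of $G_{\overline{D},\nu}$, and the convex geometric statement on nested bodies---are comparatively routine.
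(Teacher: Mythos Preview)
Your proposal is correct and follows essentially the same route as the paper's proof: both arguments pivot on the equivalence between the conditions $\Delta_\nu(V_\bullet^t(\overline{D})) = \Delta_\nu(D)$ and $\vol(V_\bullet^t(\overline{D})) = \vol(D)$, established via the ample-series criterion of \cite[Lemma~1.6]{BC} together with the observation that a closed subset of a convex body with the same Lebesgue measure must coincide with it. Your explicit introduction of the intermediate quantity $\tau$ and the double inclusion $\{G_{\overline{D},\nu} > t\} \subseteq \Delta_\nu(V_\bullet^t(\overline{D})) \subseteq \{G_{\overline{D},\nu} \geq t\}$ simply make transparent what the paper records as ``it follows from the definitions''; note incidentally that closedness alone (without convexity) of $\Delta_\nu(V_\bullet^t(\overline{D}))$ already suffices for the equal-measure-implies-equality step, as the paper's proof uses.
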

\begin{proof} Let $\theta(\overline{D})$ be the supremum on the right hand-side. To prove that $\theta(\overline{D}) \geq \inf G_{\overline{D}, \nu}$, we assume that $\inf  G_{\overline{D}, \nu} > -\infty$ without loss of generality. For any real number  $t < \inf G_{\overline{D}, \nu}$, it follows from the definitions that  $\Delta_{\nu}(V_\bullet^t (\overline{D}))=\Delta_{\nu}(D)$. Moreover, $V_\bullet^t (\overline{D})$ contains an ample series by \cite[Lemma 1.6 and (1.8) page 1213]{BC}. Therefore $\vol (V_\bullet^t (\overline{D})) = \vol(D)$, and it follows that $\theta(\overline{D}) \geq \inf  G_{\overline{D}, \nu}$. Conversely, let $t \in \bR$ be a real number such that $\vol (V_\bullet^t (\overline{D})) = \vol(D) > 0$. By \cite[Lemma 1.6]{BC},  $V_\bullet^t (\overline{D})$ contains an ample series and therefore $\mu_{\bR^d}(\Delta_{\nu}(V_\bullet^t (\overline{D})))=\mu_{\bR^d}(\Delta_{\nu}(D))$. Since $\Delta_{\nu}(V_\bullet^t (\overline{D}))$ is a closed subset of $\Delta_{\nu}(D)$, we have $\Delta_{\nu}(V_\bullet^t (\overline{D}))=\Delta_{\nu}(D)$ and therefore $G_{\overline{D},\nu}(\alpha) \geq t$ for every $\alpha \in \Delta_{\nu}(D)$ by definition of $G_{\overline{D},\nu}$.
\end{proof}

We end this paragraph with a fundamental theorem of Boucksom and Chen relating the concave transform to the arithmetic volume.

\begin{theorem}[Boucksom--Chen]\label{thmBC} We have
\[\widehat{\vol}(\overline{D}) = (d+1)!\int_{\Delta_{\nu}(D)} \max \{0, G_{\overline{D},\nu}\}d\mu_{\bR^d}.\]
Moreover, if $\overline{D}$ is semi-positive  then
\[h_{\overline{D}}(X) \leq (d+1)!\int_{\Delta_{\nu}(D)} \max \{t, G_{\overline{D},\nu}\}d\mu_{\bR^d},\]
 for every $t \in \bR$, with equality if $\zeta_{\mathrm{abs}}(\overline{D}) \geq t$. 
\end{theorem}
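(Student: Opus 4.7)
My proof plan would follow Boucksom and Chen's original approach \cite{BC}, splitting the statement into two parts that are proved separately.

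For the first identity, the plan is to reformulate $\widehat{h}^0(X,n\overline{D})$ in terms of the filtration $\{V^t_n(\overline{D})\}_{t\in\bR}$ and then pass to the limit. More precisely, the starting observation is that by the very definition of $V^t_n(\overline{D})$, the "small sections" that contribute to $\widehat{h}^0(X,n\overline{D})$ are, up to a negligible error, enumerated by a layer-cake integral of the form
\[
\widehat{h}^0(X,n\overline{D}) \;\approx\; \int_0^{\infty} \dim V^t_n(\overline{D})\, dt,
\]
where the approximation comes from the fact that $\widehat{\Gamma}(X,n\overline{D}-t\overline{\xi})$ decreases in $t$, with jumps controlled by successive minima (Minkowski's theorem in the number field case, or a Riemann-Roch argument in the function field case). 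After dividing by $n^{d+1}/(d+1)!$ and performing the change of variables $t\mapsto nt$, one obtains
\[
\frac{\widehat{h}^0(X,n\overline{D})}{n^{d+1}/(d+1)!}\;\approx\; (d+1)\int_0^\infty \frac{\dim V^{nt}_n(\overline{D})}{n^d/d!}\, dt.
\]
The next step is to apply the Okounkov body result: since $V^t_\bullet(\overline{D})$ contains an ample series for $t<\inf G_{\overline{D},\nu}$ and is otherwise still amenable to \cite[Theorem 2.13]{LazMus}, one has $\dim V^{nt}_n(\overline{D})/(n^d/d!)\to \mu_{\bR^d}(\Delta_\nu(V_\bullet^t(\overline{D})))$. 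By the very definition of $G_{\overline{D},\nu}$, the latter measure is exactly $\mu_{\bR^d}(\{G_{\overline{D},\nu}\geq t\})$. Fubini then gives
\[
(d+1)\int_0^\infty \mu_{\bR^d}(\{G_{\overline{D},\nu}\geq t\})\,dt \;=\; (d+1)\int_{\Delta_\nu(D)}\max\{0,G_{\overline{D},\nu}\}\,d\mu_{\bR^d},
\]
which, combined with $\limsup\,\approx\,\lim$ (justified by monotonicity of the truncation), yields the first formula after matching the factor $(d+1)!$.

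For the second part, the plan is to reduce to the first by a twisting argument. Fix $t\in\bR$ and consider $\overline{D}(t):=\overline{D}-t\overline{\xi}$. By construction $G_{\overline{D}(t),\nu}=G_{\overline{D},\nu}-t$, and moreover $\overline{D}(t)$ remains semi-positive (since adding a constant at each place preserves semi-positivity). Applying the first identity to $\overline{D}(t)$ and using $\max\{0,G-t\}=\max\{t,G\}-t$ together with $\mu_{\bR^d}(\Delta_\nu(D))=\vol(D)/d!=\deg_D(X)/d!$, one gets
\[
\widehat{\vol}(\overline{D}(t)) \;=\; (d+1)!\int_{\Delta_\nu(D)}\max\{t,G_{\overline{D},\nu}\}\,d\mu_{\bR^d}\;-\;t(d+1)\deg_D(X).
\]
By Remark \ref{remaheight}(\ref{rematwistheight}) we have $h_{\overline{D}(t)}(X)=h_{\overline{D}}(X)-t(d+1)\deg_D(X)$, so the arithmetic Hodge index theorem (Theorem \ref{thmHodge}) applied to the semi-positive $\overline{D}(t)$ gives $h_{\overline{D}}(X)-t(d+1)\deg_D(X)\leq \widehat{\vol}(\overline{D}(t))$, which rearranges to the desired inequality. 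Finally, when $\zeta_{\mathrm{abs}}(\overline{D})\geq t$ one has $\zeta_{\mathrm{abs}}(\overline{D}(t))\geq 0$, so $\overline{D}(t)$ is nef and Theorem \ref{thmHodge} yields equality.

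The main technical obstacle is the first step: justifying that $\widehat{h}^0(X,n\overline{D})$ is genuinely well approximated by the layer-cake integral of the $\dim V^t_n(\overline{D})$, uniformly enough in $n$ to allow interchange of $\lim$ and $\int$. This requires controlling the gap between $\dim V^t_n(\overline{D})$ and the actual count/dimension of $\widehat{\Gamma}$ at the sub-leading order, which is where the arithmetic input (successive minima bounds, equivalently an absolute Siegel lemma) enters and where one must also handle the distinction between $\limsup$ in the definition of $\widehat{\vol}$ and the honest limit produced by Okounkov body convergence.
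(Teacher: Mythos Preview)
Your proposal is correct and follows essentially the same approach as the paper: the first identity is attributed to \cite[Theorem 1.11]{BC} (the paper simply cites this rather than re-sketching the filtration/layer-cake argument as you do), and for the second part you carry out exactly the same twisting argument $\overline{D}(t)=\overline{D}-t\overline{\xi}$ combined with Theorem~\ref{thmHodge} and Remark~\ref{remaheight}(\ref{rematwistheight}) that the paper uses.
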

\begin{proof} The first part of the theorem follows from \cite[Theorem 1.11]{BC} as in the proof  of \cite[Theorem 2.8]{BC}. In order to prove the second part, we assume that $\overline{D}$ is semi-positive. Let $\overline{\xi}= (0,(\xi_v)_{v \in \Sigma_K}) \in \widehat{\Div}(X)$ be the adelic $\bR$-Cartier divisor introduced at the beginning of §~\ref{paragconcave} and let $t \in \bR$ be a real number. We let $\overline{D}(t) = \overline{D} - t\overline{\xi}$.  By definition, we have $G_{\overline{D}(t),\nu} = G_{\overline{D},\nu}-t$. Moreover, $\overline{D}(t)$ is semi-positive. By Remark \ref{remaheight} \eqref{rematwistheight} and Theorem \ref{thmHodge}, we have 
\[h_{\overline{D}}(X) = h_{\overline{D}(t)}(X) +t(d+1) \deg_D(X) \leq \widehat{\vol}(\overline{D}(t)) +t(d+1) \deg_D(X),\]
with equality if $\zeta_{\mathrm{abs}}(\overline{D}) \geq t$ (since in that case $\zeta_{\mathrm{abs}}(\overline{D}(t)) = \zeta_{\mathrm{abs}}(\overline{D}) -t \geq 0$, so that $\overline{D}(t)$ is nef and $h_{\overline{D}(t)}(X) = \widehat{\vol}(\overline{D}(t))$ by Theorem \ref{thmHodge}). 
On the other hand, 
$\widehat{\vol}(\overline{D}(t)) = (d+1)!\int_{\Delta_{\nu}(D)} \max \{0, G_{\overline{D},\nu} -t \}d\mu_{\bR^d}$ and $\deg_D(X)= d! \mu_{\bR^d}(\Delta_\nu(D))$. Therefore
\begin{align*}
h_{\overline{D}}(X) & \leq  (d+1)!\int_{\Delta_{\nu}(D)} ( \max \{0, G_{\overline{D},\nu} -t \} + t)d\mu_{\bR^d}\\
&  = (d+1)!\int_{\Delta_{\nu}(D)} \max \{t, G_{\overline{D},\nu} \}d\mu_{\bR^d},
\end{align*}
with equality if $\zeta_{\mathrm{abs}}(\overline{D}) \geq t$. 
\end{proof}

\subsection{The generalized arithmetic concave transform}\label{paraggeneralconc} In the previous paragraph, we defined the arithmetic concave transform for adelic $\bR$-Cartier divisors $\overline{D} = (D, (g_v)_{v\in \Sigma_K})$ with $D$ big. Our goal in this subsection is to extend this construction to the case where $D$ is pseudo-effective (Definition \ref{defConcPseff}). We first need to define Okounkov bodies of pseudo-effective divisors.
\begin{defi}\label{defiGeomNOpseff} Let $D \in \Div(X)_{\bR}$. If $D$ is pseudo-effective, the Okounkov body of $D$ with respect to $\nu$ is 
\[\Delta_\nu(D) = \bigcap_{A \text{ ample}} \Delta_\nu(D+A),\]
where the intersection is over all ample $\bR$-Cartier divisors $\overline{A}$ on $X$. If $D$ is not pseudo-effective, we put $\Delta_\nu(D) = \emptyset$.
\end{defi}
 When $D$ is big, the Okounkov body $\Delta_\nu(D)$ of Definition \ref{defiGeomNOpseff} coincides with the one defined in §~\ref{paragNO} (see \cite[Proposition 4.4]{BoucksomOkounkov}). 
 
\begin{lemma}\label{lemmaampleNO} For any $\overline{A} \in \widehat{\Div}(X)_\bR$ generated by strictly small $\bR$-sections, we have $0_{\bR^d} \in \Delta_{\nu}(V_\bullet^0(\overline{A}))$. In particular, $G_{\overline{A},\nu}(0_{\bR^d}) \geq 0$.
\end{lemma}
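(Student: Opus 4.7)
The plan is to produce, by Diophantine approximation, a sequence of integer small sections of multiples of $\overline{A}$ whose $\nu$-valuations cluster at $0_{\bR^d}$. Let $x = c_X(\nu) \in X_{\overline{K}}$ be the center of $\nu$. By hypothesis, there exists $s = (\phi, A) \in \widehat{\Gamma}(X, \overline{A})_\bR^\times$ such that $x \notin \Supp(\divi(s)_{\overline{K}})$ and $\sum_{v \in \Sigma_K} n_v(K) \ln \|s\|_{v,\sup} < 0$. Since the $\bR$-Weil divisor $\divi(s) = (\phi) + A$ is effective with support avoiding $x$, we have $\nu(s) = 0_{\bR^d}$.

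Writing $\phi = \sum_{i=1}^I a_i \phi_i$ in $\Rat(X)^\times \otimes_\bZ \bR$ with $a_i \in \bR$ and $\phi_i \in \Rat(X)^\times$, I would invoke Dirichlet's theorem on simultaneous Diophantine approximation: for every $\varepsilon > 0$ there exist infinitely many $n \geq 1$ and integers $(m_i)_{i=1}^I$ with $\max_i |m_i - n a_i| < \varepsilon$. Setting $\tilde{\phi}_n = \prod_i \phi_i^{m_i} \in \Rat(X)^\times$ and $\sigma_n = (\tilde{\phi}_n, nA)$, the decomposition
\[(\tilde{\phi}_n) + nA = n\,((\phi) + A) + \sum_{i=1}^I (m_i - na_i)(\phi_i)\]
yields $\nu(\sigma_n) = \sum_i (m_i - na_i)\,\nu((\phi_i))$, a quantity of norm $O(\max_i|m_i - n a_i|)$ independent of $n$. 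Hence $\nu(\sigma_n)/n \to 0_{\bR^d}$ along this sequence.

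The main technical step is to verify $\sigma_n \in \widehat{\Gamma}(X, n\overline{A})$ for a suitable choice of $(n, (m_i))$. Effectivity of $(\tilde{\phi}_n) + nA$ holds automatically at prime divisors in $\Supp((\phi)+A)$ once $n$ is large, since the term $n\,((\phi)+A)$ absorbs the bounded error; at the finitely many remaining primes appearing in the $(\phi_i)$, where $(\phi)+A$ vanishes, the condition reduces to a finite system of linear inequalities on the approximation errors $m_i - n a_i$ that can be solved by choosing the signs of these errors appropriately. The sup-norm bound $\|\sigma_n\|_{v,\sup} \leq 1$ at each place is controlled using that only finitely many $v$ are non-trivial by the adelic structure of $\overline{A}$, and that strict smallness of $s$, after possibly rescaling by an element of $K^\times \otimes_\bZ \bR$ to redistribute slack across places via the product formula, provides the uniform gap needed to absorb the small multiplicative perturbation $\prod_i |\phi_i|_{v,\sup}^{m_i - n a_i}$ at the finitely many relevant places.

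Granting the approximation, each $\sigma_n$ is a non-zero element of $V_n^0(\overline{A})$ with $\nu(\sigma_n)/n \to 0_{\bR^d}$, which gives $0_{\bR^d} \in \Delta_\nu(V_\bullet^0(\overline{A}))$. The inequality $G_{\overline{A}, \nu}(0_{\bR^d}) \geq 0$ is then immediate from the definition of the concave transform as $\sup\{t \in \bR : 0_{\bR^d} \in \Delta_\nu(V_\bullet^t(\overline{A}))\}$. The hard part will be the simultaneous control of effectivity and the adelic sup-norm bound under the Diophantine approximation, particularly at the prime divisors where $(\phi)+A$ vanishes despite individual $(\phi_i)$ contributing; both are handled by combining strict smallness with the finite reduction provided by the adelic structure and the product formula.
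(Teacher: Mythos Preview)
Your approach is genuinely different from the paper's, and it contains gaps that are not obviously repairable as written.

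The paper does not approximate the $\bR$-section directly. Instead it first treats the case $\overline{A}\in\widehat{\Div}(X)$ by invoking \cite[Lemma~3.17]{BMPS}, which already furnishes an integer $n$ and a small \emph{integer} section $s\in\widehat{\Gamma}(X,n\overline{A})^\times$ missing $c_X(\nu)$; then $\nu(s)=0$ and one is done. For general $\overline{A}\in\widehat{\Div}(X)_\bR$ the paper perturbs at the level of divisors: using \cite[Proof of Proposition~6.5.3]{MoriwakiZariski} one finds $\overline{A}_1,\ldots,\overline{A}_r$ with $1\in\widehat{\Gamma}(X,\overline{A}_i)^\times$ such that $\overline{A}-\sum_i\delta_i\overline{A}_i\in\widehat{\Div}(X)_\bQ$ remains generated by strictly small $\bR$-sections for arbitrarily small $\delta_i>0$. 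The rational case then yields $0_{\bR^d}\in\Delta_\nu(V_\bullet^0(\overline{A}-\sum\delta_i\overline{A}_i))$, and the inclusion
\[
\Delta_\nu(V_\bullet^0(\overline{A}-\textstyle\sum_i\delta_i\overline{A}_i))+\sum_i\delta_i\,\nu(A_i)\subseteq\Delta_\nu(V_\bullet^0(\overline{A}))
\]
gives $\sum_i\delta_i\,\nu(A_i)\in\Delta_\nu(V_\bullet^0(\overline{A}))$; letting $\delta_i\to 0$ concludes. The point is that the approximation happens \emph{before} one asks for a small section, so the delicate effectivity and norm conditions are handled by the cited lemmas rather than by hand.

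Your Dirichlet-approximation scheme runs into two concrete difficulties. First, effectivity at a prime $P$ with $\ord_P((\phi)+A)=0$ requires $\sum_i(m_i-na_i)\ord_P(\phi_i)\geq 0$. Dirichlet's theorem bounds $\max_i|m_i-na_i|$ but gives no control on signs; when several such primes impose conflicting half-space constraints on the error vector (for instance if some $\phi_j$ has a zero at $P$ and a pole at $Q$, both with $\ord((\phi)+A)=0$), the feasible cone can degenerate and no small error vector works. This is not a matter of ``choosing signs appropriately'': the existence of an admissible $(n,(m_i))$ is a nontrivial statement that in the integer-divisor case is essentially the content of \cite[Lemma~3.17]{BMPS}. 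Second, your sup-norm argument appeals to a bound of the form $\|\sigma_n\|_{v,\sup}\leq\|s\|_{v,\sup}^n\cdot\sup_z\prod_i|\phi_i(z)|_v^{m_i-na_i}$, but the last factor is $+\infty$ since the $\phi_i$ have poles; one cannot decouple the perturbation from the Green function this way. Even after rescaling by $K^\times\otimes\bR$ to make $\|s\|_{v,\sup}<1$ at the finitely many bad places, you still need $\|\sigma_n\|_{v,\sup}\leq 1$ at every place, and for the infinitely many $v$ where $\|s\|_{v,\sup}=1$ there is no slack to absorb the nontrivial contribution of the $\phi_i$ on the special fibre of the model. These two issues are exactly what the paper's perturbation-of-divisors strategy is designed to avoid.
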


\begin{proof} Let us first assume that $\overline{A} \in \widehat{\Div}(X)$. By \cite[Lemma 3.17]{BMPS}, there exists an integer $n \geq 1$ and a small global section $s \in \widehat{\Gamma}(X,n\overline{A})^\times \subseteq V_ n^0(\overline{A})$ such that $c_X(\nu) \notin \Supp(\divi(s))_{\overline{K}}$. Therefore, if $f \in \Rat(X_{\overline{K}})^\times$ defines $\divi(s)_{\overline{K}}$ around $c_X(\nu)$, we have $f = \cO_{X_{\overline{K}},c_X(\nu)}^\times$. Hence  $0_{\bR^{d}} = \nu(f)/n = \nu(s)/n\in \Delta_{\nu}(V_\bullet^0(\overline{A}))$.

 We now consider the general case. As shown in \cite[Proof of Proposition 6.5.3]{MoriwakiZariski}, there exist adelic $\bR$-Cartier divisors $\overline{A}_1, \ldots, \overline{A}_r$ satisfying the following conditions:
\begin{itemize}
\item for every $i \in \{1, \ldots, r \}$, we have $1 \in \widehat{\Gamma}(X,\overline{A}_i)^\times$;
\item for every $\varepsilon > 0$, there exist positive real numbers $\delta_1, \ldots, \delta_r \in (0,\varepsilon)$ such that $\overline{A}  - \sum_{i=1}^r \delta_i \overline{A}_i \in \widehat{\Div}(X)_\bQ$ is generated by strictly small $\bR$-sections.
\end{itemize}
 Let $\varepsilon > 0$ and $\delta_1, \ldots, \delta_r \in (0,\varepsilon)$ be as above. By construction, we have
\[\nu(V_n^0(\overline{A}  - \sum_{i=1}^r \delta_i \overline{A}_i)) +n\sum_{i=1}^r \delta_i  \nu(A_i) \subseteq \nu(V_n^0(\overline{A}))\]
for any integer $n \geq 1$, and therefore 
\[\Delta_{\nu}^0(\overline{A} - \sum_{i=1}^r \delta_i \overline{A}_i) + \sum_{i=1}^r \delta_i \nu(A_i)  \subseteq \Delta_{\nu}^0(\overline{A}).\]
By the above we have $0_{\bR^d} \in \Delta_{\nu}^0(\overline{A} - \sum_{i=1}^r \delta_i \overline{A}_i)$, thus $\sum_{i=1}^r \delta_i \nu(A_i)  \subseteq \Delta_{\nu}^0(\overline{A})$. By letting $\varepsilon$ tend to zero, it follows that $0_{\bR^d} \in \Delta_{\nu}^0(\overline{A})$.
\end{proof}

 \begin{lemma}\label{lemmaConcinf} Let $\overline{D} = (D, (g_v)_{v\in \Sigma_K}) \in \widehat{\Div}(X)$, and assume that $D$ is big. For every $\alpha  \in \Delta_\nu(D)$, we have 
 \[G_{\overline{D},\nu}(\alpha) = \inf_{\overline{A} \text{ ample }} G_{\overline{D} + \overline{A}}(\alpha),\]
 where the infimum is over all ample adelic $\bR$-Cartier divisors $\overline{A}$ on $X$. 
\end{lemma}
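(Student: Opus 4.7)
The direction $G_{\overline{D},\nu}(\alpha) \leq G_{\overline{D}+\overline{A},\nu}(\alpha)$ for every ample $\overline{A}$ can be proved by a direct multiplication argument. Since $\overline{A}$ is ample, Lemma \ref{lemmaampleNO} gives $0_{\bR^d} \in \Delta_\nu(V_\bullet^0(\overline{A}))$, so there exist $w_l \in V_{m_l}^0(\overline{A})$ with $\nu(w_l)/m_l \to 0$. Approximating $\alpha \in \Delta_\nu(V_\bullet^t(\overline{D}))$ by $\nu(u_k)/n_k$ with $u_k \in V_{n_k}^{n_k t}(\overline{D})$, the products $u_k^{m_l} w_l^{n_k}$ belong to $V_{n_k m_l}^{n_k m_l t}(\overline{D}+\overline{A})$ by multiplicativity of sup-norms and have valuations $\nu(u_k)/n_k + \nu(w_l)/m_l$ converging to $\alpha$. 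This yields $\alpha \in \Delta_\nu(V_\bullet^t(\overline{D}+\overline{A}))$, hence the inequality.

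For the reverse inequality, the plan is to fix any ample $\overline{A}_0$ and consider the decreasing sequence $\overline{A}_m = \overline{A}_0/m$. Applying the first step to $\overline{D}+\overline{A}_{m+1}$ with ample perturbation $\overline{A}_m - \overline{A}_{m+1}$ shows that $\Phi_m := G_{\overline{D}+\overline{A}_m, \nu}$ is decreasing in $m$; moreover, since $\overline{A}-\overline{A}_m$ is ample for $m$ large when $\overline{A}$ is any ample adelic divisor, the first step also gives $\Phi_m \leq G_{\overline{D}+\overline{A}, \nu}$, so $\Phi := \lim_m \Phi_m$ equals $\inf_{\overline{A}} G_{\overline{D}+\overline{A}, \nu}$. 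Extending $\Phi_m$, $\Phi$, $G_{\overline{D}, \nu}$ to $\bR^d$ by $-\infty$ outside their natural domains, the Boucksom--Chen integral formula (Theorem \ref{thmBC}) combined with the continuity of the arithmetic volume \eqref{eqcontvol} and monotone convergence will yield
\[\int_{\bR^d}\max\{0, \Phi\}\,d\mu = \int_{\bR^d}\max\{0, G_{\overline{D}, \nu}\}\,d\mu.\]
In view of the pointwise inequality $\Phi \geq G_{\overline{D}, \nu}$ established in the first step, this forces $\max\{0, \Phi\} = \max\{0, G_{\overline{D}, \nu}\}$ $\mu$-almost everywhere.

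To upgrade from almost everywhere to pointwise equality on all of $\Delta_\nu(D)$, I will apply the same argument with $\overline{D}$ replaced by $\overline{D} + c\overline{\xi}$ for arbitrary $c \in \bR$ (using $G_{\overline{D}+c\overline{\xi}, \nu} = G_{\overline{D}, \nu} + c$), and let $c \to +\infty$ to obtain a.e. equality on the full locus where $G_{\overline{D}, \nu}$ is finite. Since both $\Phi$ and $G_{\overline{D}, \nu}$ are concave and upper semi-continuous, standard continuity of concave functions on open convex sets gives pointwise equality on the interior of $\Delta_\nu(D)$, and a radial-limit argument from the interior extends this to the boundary. The main obstacle lies in this last passage from a.e. to pointwise equality: one must navigate the fact that the domains $\Delta_\nu(D+A_m)$ need not be monotone in $m$, handle the possibly $-\infty$ values of the concave transform along boundary points, and exploit concavity and upper semi-continuity carefully to control boundary behavior.
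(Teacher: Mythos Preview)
Your proposal is correct and uses essentially the same ingredients as the paper's proof: Lemma~\ref{lemmaampleNO} for the easy inequality, and the Boucksom--Chen formula (Theorem~\ref{thmBC}) together with the continuity of the arithmetic volume~\eqref{eqcontvol} for the reverse inequality, with concavity and upper semi-continuity handling the passage between interior and boundary points.

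The only difference is organizational. The paper argues the reverse inequality by contradiction: assuming $G_{\overline{D},\nu}(\alpha_0) < \widetilde{G}_{\overline{D},\nu}(\alpha_0)$ at some $\alpha_0$, it first uses concavity of $\widetilde{G}_{\overline{D},\nu}$ and upper semi-continuity of $G_{\overline{D},\nu}$ to find such an $\alpha$ in the \emph{interior} of $\Delta_\nu(D)$, then shifts once by $\varepsilon\overline{\xi}$ to make $G_{\overline{D}+\varepsilon\overline{\xi},\nu}(\alpha)>0$, and finally obtains a uniform volume gap $\widehat{\vol}(\overline{D}+\varepsilon\overline{\xi})+\lambda' < \widehat{\vol}(\overline{D}+\varepsilon\overline{\xi}+\overline{A})$ for all ample $\overline{A}$, contradicting continuity. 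This sidesteps the a.e.-to-everywhere issue entirely by reducing to the interior \emph{before} invoking volumes. Your direct approach via monotone convergence and the $c\to+\infty$ trick reaches the same conclusion; for the boundary step, note that you only need concavity of $\Phi$ (not upper semi-continuity, which an infimum need not inherit): concavity gives $\Phi(\alpha_0)\le\liminf_{t\to 0^+}\Phi(\alpha_t)$ along any radial segment into the interior, and since $\Phi=G_{\overline{D},\nu}$ on the interior while $G_{\overline{D},\nu}$ is upper semi-continuous, this yields $\Phi(\alpha_0)\le G_{\overline{D},\nu}(\alpha_0)$.
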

\begin{proof} We let $\widetilde{G}_{\overline{D},\nu} = \inf_{\overline{A} \text{ ample }} G_{\overline{D} + \overline{A}}$ be the function on the right hand-side.   Let $\overline{A}$ be an ample adelic $\bR$-Cartier divisor. For any $t \in \bR$ and for any integer $n \geq1$, 
$\nu(V_n^{nt}(\overline{D})) +\nu(V_n^{0}(\overline{A}))$ is a subset of
\[ \left\{ \nu(\divi(s) + \divi(s')) \ | \ s \in V_n^{nt}(\overline{D}) \setminus \{0\}, \ s' \in V_n^{0}(\overline{A}) \setminus \{0\} \right\}
  \subseteq \nu(V_n^{nt}(\overline{D}+\overline{A})).
\] 
This implies that ${\Delta}_{\nu}(V_{\bullet}^t(\overline{D})) + {\Delta}_{\nu}(V_{\bullet}^{0}(\overline{A})) \subseteq {\Delta}_{\nu}(V_{\bullet}^{t}(\overline{D}+\overline{A}))$, and therefore \[{\Delta}_{\nu}(V_{\bullet}^t(\overline{D}))\subseteq {\Delta}_{\nu}(V_{\bullet}^{t}(\overline{D}+\overline{A}))\]
 by Lemma \ref{lemmaampleNO}. It follows that $ G_{\overline{D},\nu}  \leq \widetilde{G}_{\overline{D},\nu}$. To prove the converse inequality, we argue by contradiction and we assume that $G_{\overline{D},\nu} < \widetilde{G}_{\overline{D},\nu}$. Using that $\widetilde{G}_{\overline{D},\nu}$ is concave, that $G_{\overline{D},\nu}$ is upper semi-continuous on  $\Delta_\nu(D)$, and that $G_{\overline{D},\nu}$  is continuous on the interior of $\mathrm{int}(\Delta_\nu(D))$, it is easy to see that there exists $\alpha \in \mathrm{int}(\Delta_\nu(D))$ such that  $G_{\overline{D},\nu}(\alpha) < \widetilde{G}_{\overline{D},\nu}(\alpha)$.  Let $\overline{\xi} \in \widehat{\Div}(X)$ be the adelic Cartier divisor defined in §~\ref{paragconcave}. Then  $G_{\overline{D} +\varepsilon\overline{\xi}, \nu} =  G_{\overline{D},\nu}  + \varepsilon$ and $\widetilde{G}_{\overline{D} +\varepsilon\overline{\xi},\nu} =  \widetilde{G}_{\overline{D}, \nu}  + \varepsilon$ for any $\varepsilon \in \bR$ by definition. Let $\varepsilon$ be a real number such that $G_{\overline{D} +\varepsilon\overline{\xi}, \nu} (\alpha) > 0$. Since $\alpha \in \mathrm{int}(\Delta_\nu(D))$ and  $0 < G_{\overline{D} +\varepsilon\overline{\xi}, \nu}(\alpha) <  \widetilde{G}_{\overline{D} +\varepsilon\overline{\xi}, \nu}(\alpha)$, there exist a real number $\lambda > 0$ and an open subset $U \subseteq \mathrm{int}(\Delta_\nu(D))$ with positive Lebesgue measure such that $0 < G_{\overline{D} +\varepsilon\overline{\xi}, \nu}(\beta) <  \widetilde{G}_{\overline{D} +\varepsilon\overline{\xi}, \nu}(\beta) - \lambda$ for every $\beta \in U$ (note that $ G_{\overline{D} +\varepsilon\overline{\xi}, \nu}$ and $ \widetilde{G}_{\overline{D} +\varepsilon\overline{\xi}, \nu}$ are concave, hence continuous on $\mathrm{int}(\Delta_\nu(D)$).  Theorem \ref{thmBC} implies that there exists a real number $\lambda' > 0$ such that 
 \[\widehat{\vol}(\overline{D} +\varepsilon\overline{\xi})+ \lambda' < \widehat{\vol}(\overline{D} +\varepsilon\overline{\xi} + \overline{A}) \]
for any ample adelic $\bR$-Cartier divisor $\overline{A}$ on $X$. This contradicts the continuity property  \eqref{eqcontvol} of $\widehat{\vol}$, and finishes the proof. 
\end{proof}

\begin{rema} It follows from the proof that Lemma \ref{lemmaConcinf} remains valid when the infimum is taken over all adelic $\bR$-Cartier divisors generated by strictly small $\bR$-sections. We will not use this fact in this text.
\end{rema}
Lemma \ref{lemmaConcinf} allows us to extend the definition of the arithmetic concave transform as follows. 
\begin{defi}\label{defConcPseff} Let $\overline{D} = (D, (g_v)_{v\in \Sigma_K})$ be an adelic $\bR$-Cartier divisor on $X$ such that $D$ is pseudo-effective. The arithmetic concave transform of $\overline{D}$ with respect to $\nu$ is the concave function  $G_{\overline{D}, \nu} \colon \Delta_\nu(D) \rightarrow \bR \cup \{-\infty\}$
defined by 
\[G_{\overline{D},\nu}(\alpha) = \inf_{\overline{A} \text{ ample}} G_{\overline{D}+\overline{A},\nu}(\alpha),\]
for every $\alpha \in \Delta_\nu(D)$, where the infimum is over all ample adelic $\bR$-Cartier divisors $\overline{A}$  on $X$. 
\end{defi}

We end this paragraph with an alternative description of the arithmetic concave transform.

\begin{lemma}\label{lemmaConcalt} Let $\overline{D} = (D, (g_v)_{v\in \Sigma_K})$ be an adelic $\bR$-Cartier divisor on $X$ such that $D$ is pseudo-effective, and let $\overline{A}_0 \in \widehat{\Div}(X)_{\bR}$ be generated by strictly small $\bR$-sections. For every $\alpha \in \Delta_\nu(D)$, we have 
\[G_{\overline{D}, \nu}(\alpha) = \lim_{m \rightarrow \infty} G_{\overline{D} + \frac{1}{m}\overline{A}_0,\nu}(\alpha).\]
Moreover, for every $\alpha \in \Delta_\nu(D)$ we have 
\[G_{\overline{D}, \nu}(\alpha) = \inf_{\substack{\overline{A} \text{ ample } \\ \overline{D} + \overline{A} \in \widehat{\Div}(X)_\bQ}} G_{\overline{D} + \overline{A}, \nu}(\alpha),\]
where the infimum is over all ample adelic $\bR$-Cartier divisors on $X$ such that $\overline{D} + \overline{A} \in \widehat{\Div}(X)_\bQ$ is an adelic $\bQ$-Cartier divisor.
\end{lemma}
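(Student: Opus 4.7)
The plan is to prove both equalities in turn, relying throughout on a monotonicity principle for the concave transform that generalizes the inclusion argument from the proof of Lemma \ref{lemmaConcinf}. Specifically, I will first show that if $\overline{D}' \in \widehat{\Div}(X)_\bR$ has pseudo-effective underlying divisor and $\overline{B} \in \widehat{\Div}(X)_\bR$ is generated by strictly small $\bR$-sections, then $G_{\overline{D}'+\overline{B},\nu}(\beta) \geq G_{\overline{D}',\nu}(\beta)$ for every $\beta \in \Delta_\nu(D')$. When $D'$ is big, this follows by combining the inclusion $\nu(V^{nt}_n(\overline{D}')) + \nu(V^0_n(\overline{B})) \subseteq \nu(V^{nt}_n(\overline{D}'+\overline{B}))$ with Lemma \ref{lemmaampleNO}; the pseudo-effective case is then deduced by adding an arbitrary ample adelic divisor and passing to the infimum in Definition \ref{defConcPseff}.

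For the first equality, the lower bound $G_{\overline{D}+\frac{1}{m}\overline{A}_0,\nu}(\alpha) \geq G_{\overline{D},\nu}(\alpha)$ is immediate from the monotonicity applied with $\overline{B} = \frac{1}{m}\overline{A}_0$. For the matching upper bound, I would fix an arbitrary ample $\overline{A}$ and use the decomposition $\overline{D}+\overline{A} = (\overline{D}+\frac{1}{m}\overline{A}_0) + (\overline{A}-\frac{1}{m}\overline{A}_0)$. The crucial claim is that $\overline{A}-\frac{1}{m}\overline{A}_0$ is itself generated by strictly small $\bR$-sections for $m$ large enough; granted this, the monotonicity yields $G_{\overline{D}+\overline{A},\nu}(\alpha) \geq G_{\overline{D}+\frac{1}{m}\overline{A}_0,\nu}(\alpha)$ for $m \gg 1$, and taking $\limsup_m$ followed by the infimum over ample $\overline{A}$ gives the desired bound.

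For the second equality, one direction is trivial since the infimum on the right is restricted to a subset of the ample cone. For the other direction, given an ample $\overline{A}$, I would use density of $\widehat{\Div}(X)_\bQ$ in $\widehat{\Div}(X)_\bR$ to produce $\overline{E} \in \widehat{\Div}(X)_\bR$ arbitrarily small with $\overline{D}+\overline{A}+\overline{E} \in \widehat{\Div}(X)_\bQ$, set $\overline{A}' = \overline{A}+\overline{E}$, and invoke the same openness to conclude that $\overline{A}'$ is still ample. To show $G_{\overline{D}+\overline{A}',\nu}(\alpha)$ approximates $G_{\overline{D}+\overline{A},\nu}(\alpha)$, I would apply the first equality to the big divisor $\overline{D}+\overline{A}$ together with a suitable auxiliary divisor relating $\overline{A}$ and $\overline{A}'$, and combine this with Theorem \ref{thmBC} and Moriwaki's continuity \eqref{eqcontvol} of the arithmetic volume, exploiting concavity of the transforms to upgrade the resulting integral convergence to pointwise convergence on the interior of $\Delta_\nu(D+A)$.

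The main obstacle I anticipate is the openness property invoked in both parts: if $\overline{C} \in \widehat{\Div}(X)_\bR$ is generated by strictly small $\bR$-sections and $\overline{E} \in \widehat{\Div}(X)_\bR$ is sufficiently small, then $\overline{C}+\overline{E}$ inherits the same property. My approach is a quasi-compactness argument on $X_{\overline{K}}$: at each closed point $x$ there exists $s_x \in \widehat{\Gamma}(X,\overline{C})^\times_\bR$ with $x \notin \Supp(\divi(s_x)_{\overline{K}})$ and $\sum_{v \in \Sigma_K} n_v(K) \ln \|s_x\|_{v,\sup} < 0$; twisting $s_x$ by an appropriate $\bR$-power of a strictly small section of an auxiliary ample rational divisor absorbs the perturbation $\overline{E}$, and quasi-compactness of $X_{\overline{K}}$ then allows one to choose finitely many such twists that work uniformly.
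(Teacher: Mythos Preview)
Your argument for the first equality is essentially the paper's: the monotonicity principle $G_{\overline{D}'+\overline{B},\nu}\geq G_{\overline{D}',\nu}$ for $\overline{B}$ generated by strictly small $\bR$-sections, combined with the observation that $\overline{A}-\tfrac{1}{m}\overline{A}_0$ is generated by strictly small $\bR$-sections for $m\gg 1$, is exactly how the paper proceeds. (Your quasi-compactness justification of this last openness claim is more elaborate than what the paper provides, but the claim is the same.)

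For the second equality, however, you take a genuinely different and unnecessarily heavy route, and it carries a real risk. You want to show that for each ample $\overline{A}$ and each $\varepsilon>0$ there is an ample $\overline{A}'$ with $\overline{D}+\overline{A}'\in\widehat{\Div}(X)_\bQ$ and $G_{\overline{D}+\overline{A}',\nu}(\alpha)\leq G_{\overline{D}+\overline{A},\nu}(\alpha)+\varepsilon$. Your plan is to deduce pointwise proximity of the concave transforms from continuity of $\widehat{\vol}$ via Theorem~\ref{thmBC}. But integral convergence of concave functions only yields pointwise convergence on the interior of the common domain, and the point $\alpha$ you care about lies in $\Delta_\nu(D)$, which may well sit on the boundary of $\Delta_\nu(D+A)$; upper semi-continuous concave functions can drop at such points, so the upgrade from integral to pointwise convergence is not guaranteed there.

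The paper avoids this issue entirely by reusing the monotonicity principle instead of continuity. It simply notes that for every ample $\overline{A}$ one can choose an ample $\overline{A}'$ with $\overline{D}+\overline{A}'\in\widehat{\Div}(X)_\bQ$ such that $\overline{A}-\overline{A}'$ is itself generated by strictly small $\bR$-sections. Then your own monotonicity gives $G_{\overline{D}+\overline{A}',\nu}(\alpha)\leq G_{\overline{D}+\overline{A},\nu}(\alpha)$ outright---no $\varepsilon$, no volumes, no boundary issues. Since you already articulated exactly this monotonicity, the cleanest fix is to replace your continuity argument with this one-line application of it.
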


\begin{proof} Arguing as in the proof of Lemma \ref{lemmaConcinf}, we have 
\[\Delta_\nu^t(\overline{D}) \subseteq \Delta_\nu^t(\overline{D} + \frac{1}{m+1}\overline{A}_0) \subseteq \Delta_\nu^t(\overline{D} + \frac{1}{m}\overline{A}_0)\]
for every $t \in \bR$ and $m \in \bN \setminus \{0\}$. It follows that 
\[G_{\overline{D}, \nu}(\alpha) \leq  G_{\overline{D} + \frac{1}{m+1}\overline{A}_0,\nu}(\alpha) \leq G_{\overline{D} + \frac{1}{m}\overline{A}_0,\nu}(\alpha)\]
for every $\alpha \in \Delta_\nu(D)$. Let $\overline{A} \in \widehat{\Div}(X)_\bR$ be ample. Then for $m \geq 1$ sufficiently large, $\overline{A} - \frac{1}{m} \overline{A}_0$ is generated by strictly small $\bR$-sections. As above, it follows that $G_{\overline{D} + \frac{1}{m}\overline{A}_0,\nu}(\alpha) \leq G_{\overline{D} +\overline{A},\nu}(\alpha)$. Taking the infimum on $\overline{A}$, we obtain that
\[G_{\overline{D}, \nu}(\alpha) \leq  \lim_{m \rightarrow \infty} G_{\overline{D} + \frac{1}{m}\overline{A}_0,\nu}(\alpha) \leq G_{\overline{D},\nu}(\alpha) = \inf_{\overline{A} \text{ ample}} G_{\overline{D}+\overline{A},\nu}(\alpha).\]
This proves the first part of the lemma. The second one can be proved with the same arguments, after observing that for every ample $\overline{A} \in \widehat{\Div}(X)_\bR$ there exists an ample $\overline{A}' \in \widehat{\Div}(X)_\bR$ such that $\overline{D} + \overline{A}' \in \widehat{\Div}(X)_\bQ$ and $\overline{A} - \overline{A}'$ is generated by strictly small $\bR$-sections.
\end{proof}

\subsection{Arithmetic Okounkov body}\label{paragarithmNO}  

\begin{defi}\label{defiarithmNO} Let $\overline{D} = (D,(g_v)_{v\in \Sigma_K}) \in \widehat{\Div}(X)_\bR$. If $D$ is pseudo-effective, we define the arithmetic Okounkov body of $\overline{D}$ as
\[ \widehat{\Delta}_{\nu}(\overline{D}) =  \{ (\alpha, t) \in \Delta_{\nu}(D)\times \bR_+ \ | \ G_{\overline{D},\nu}(\alpha) \geq t  \}.\]
 If $D$ is not pseudo-effective, we put $ \widehat{\Delta}_{\nu}(\overline{D}) =  \emptyset$.
\end{defi}
It follows from the definitions that for any $\overline{D} =  (D,(g_v)_{v\in \Sigma_K}) \in \widehat{\Div}(X)_\bR$, we have 
\[ \widehat{\Delta}_{\nu}(\overline{D}) = \bigcap_{\overline{A}\ \text{ample}} \widehat{\Delta}_{\nu}(\overline{D} +\overline{A}),\]
where the  intersection is over all ample adelic $\bR$-Cartier divisors  $\overline{A}$ on $X$.  We also have the following useful reformulation of Lemma \ref{lemmaConcalt} in terms of arithmetic Okounkov bodies.
 \begin{lemma}\label{lemmacapNO}  Let $\overline{D} = (D,(g_v)_{v\in \Sigma_K}) \in \widehat{\Div}(X)_\bR$, and let $\overline{A}_0 \in \widehat{\Div}(X)_\bR$ be generated by strictly small $\bR$-sections. We have 
 \[ \widehat{\Delta}_{\nu}(\overline{D}) = \bigcap_{m \in \bN \setminus \{0\}} \widehat{\Delta}_{\nu}(\overline{D} + \frac{1}{m}\overline{A}_0) = \bigcap_{\substack{\overline{A}\ \text{ample} \\ \overline{D}+\overline{A} \in \widehat{\Div}(X)_\bQ}} \widehat{\Delta}_{\nu}(\overline{D} +\overline{A}),\]
where the last intersection is over all ample adelic $\bR$-Cartier divisors  $\overline{A}$ on $X$ such that $\overline{D}+\overline{A} \in \widehat{\Div}(X)_\bQ$. 
\end{lemma}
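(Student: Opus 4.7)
The strategy is to translate the two concave-transform identities of Lemma~\ref{lemmaConcalt} into the desired statements about arithmetic Okounkov bodies, using the defining formula
\[\widehat{\Delta}_{\nu}(\overline{D})=\{(\alpha, t) \in \Delta_{\nu}(D)\times \bR_+ \,:\, G_{\overline{D},\nu}(\alpha) \geq t\}.\]
I would first dispose of the degenerate case. Since $\overline{A}_0$ is generated by strictly small $\bR$-sections it admits a nonzero global $\bR$-section, whence its underlying divisor $A_0$ is $\bR$-linearly equivalent to an effective divisor and in particular pseudo-effective. If $D$ is not pseudo-effective, the closedness of the pseudo-effective cone in $\Div(X)_\bR$ forces $D + \tfrac{1}{m}A_0$ to fail to be pseudo-effective for $m$ sufficiently large, so $\widehat{\Delta}_\nu(\overline{D} + \tfrac{1}{m}\overline{A}_0) = \emptyset$ and both intersections in the lemma collapse to $\emptyset$, matching $\widehat{\Delta}_\nu(\overline{D})$. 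From here on I assume $D$ pseudo-effective.

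The core step is the geometric identity $\Delta_\nu(D) = \bigcap_{m\geq 1}\Delta_\nu(D + \tfrac{1}{m}A_0)$. For the inclusion $\subseteq$ in the big case, Lemma~\ref{lemmaampleNO} yields integers $n_k$ and small sections $s_k \in V_{n_k}^0(\overline{A}_0) \setminus \{0\}$ with $\nu(s_k)/n_k \to 0$; for any $\alpha \in \Delta_\nu(D)$ approximated by $\nu(t_j)/n_j$ with $t_j \in \Gamma(X, n_j D)^\times$, the products $t_j^{m n_k}\cdot s_k^{n_j} \in \Gamma(X, m n_k n_j(D+\tfrac{1}{m}A_0))^\times$ exhibit $\alpha + \nu(s_k)/(m n_k)$ as an element of $\Delta_\nu(D+\tfrac{1}{m}A_0)$, and letting $k \to \infty$ together with the closedness of the Okounkov body gives $\alpha \in \Delta_\nu(D+\tfrac{1}{m}A_0)$. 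The pseudo-effective case follows by intersecting over ample divisors using Definition~\ref{defiGeomNOpseff}. For the reverse inclusion, given any ample $\bR$-Cartier divisor $A$, openness of the ample cone in $\Div(X)_\bR$ ensures that $A - \tfrac{1}{m}A_0$ is ample (hence pseudo-effective) for $m \gg 1$, so applying the just-proved inclusion to $D + \tfrac{1}{m}A_0$ and $A - \tfrac{1}{m}A_0$ yields $\Delta_\nu(D + \tfrac{1}{m}A_0) \subseteq \Delta_\nu(D + A)$; intersecting over $A$ concludes.

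With the geometric identity in place, the first equality of the lemma follows from Lemma~\ref{lemmaConcalt}: the proof of that lemma shows the convergence $G_{\overline{D} + \tfrac{1}{m}\overline{A}_0, \nu}(\alpha) \searrow G_{\overline{D}, \nu}(\alpha)$ is monotone decreasing, so $G_{\overline{D}, \nu}(\alpha) \geq t$ if and only if $G_{\overline{D} + \tfrac{1}{m}\overline{A}_0, \nu}(\alpha) \geq t$ for every $m$. The second equality is obtained by the same mechanism from the ``moreover'' part of Lemma~\ref{lemmaConcalt}, combined with the corresponding geometric approximation: any ample $\bR$-Cartier divisor $A$ can be written as $A = A' + (A - A')$ with $A'$ ample, $D + A' \in \Div(X)_\bQ$ and $A - A'$ ample (using density of $\bQ$-divisors together with openness of the ample cone), yielding the parallel identity $\Delta_\nu(D) = \bigcap_{\overline{A}\text{ ample},\,\overline{D}+\overline{A}\in\widehat{\Div}(X)_\bQ} \Delta_\nu(D + A)$. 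The main obstacle I foresee is the careful verification of the geometric Okounkov-body identity in the second paragraph, which hinges on the approximation $\nu(s_k)/n_k \to 0$ provided by Lemma~\ref{lemmaampleNO}.
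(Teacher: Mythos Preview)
Your proposal is correct and follows essentially the same route as the paper. The paper's proof is a two-line affair: it asserts the geometric identity
\[\Delta_\nu(D) = \bigcap_{m \geq 1} \Delta_\nu(D+\tfrac{1}{m}A_0)= \bigcap_{\substack{A\ \text{ample} \\ D+A\in {\Div}(X)_\bQ}}  {\Delta}_{\nu}(D+A)\]
as ``well-known and not hard to see'' and then invokes Lemma~\ref{lemmaConcalt} together with the definition of $\widehat{\Delta}_\nu$. You do exactly this, but you additionally sketch the proof of the geometric identity and handle the degenerate case $D$ not pseudo-effective explicitly; both are reasonable things to spell out. Two minor points: in the degenerate case you only argue explicitly for the first intersection, whereas the second also requires noting that one can find a small ample $A$ with $D+A \in \Div(X)_\bQ$ and $D+A$ not pseudo-effective (density of $\bQ$-divisors plus closedness of the pseudo-effective cone); and in your reverse-inclusion argument, ``applying the just-proved inclusion to $D+\tfrac{1}{m}A_0$ and $A-\tfrac{1}{m}A_0$'' is cleaner if phrased directly via Definition~\ref{defiGeomNOpseff} (take $A' = A - \tfrac{1}{m}A_0$ in the defining intersection) rather than re-running the section-multiplication argument, since the latter was carried out for $\overline{A}_0$ generated by strictly small sections and you would otherwise need to observe that ample divisors also satisfy $0_{\bR^d} \in \Delta_\nu$.
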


\begin{proof} It is well-known and not hard to see that the geometric Okounkov body satisfies
\[\Delta_\nu(D) = \bigcap_{m \in \bN \setminus \{0\}} \Delta_\nu(D+\frac{1}{m}A_0)= \bigcap_{\substack{A\ \text{ample} \\ D+A\in {\Div}(X)_\bQ}}  {\Delta}_{\nu}(D+A).\]
Therefore the result follows directly from the definitions and  Lemma \ref{lemmaConcalt}. 
\end{proof}
 In the case where $D$ is big, Lemma \ref{lemmaConcinf} implies that $ \widehat{\Delta}_{\nu}(\overline{D})$ coincides with the arithmetic Okounkov body introduced in  \cite[Definition 2.7]{BC}. In particular, we have the following.
 \begin{lemma}\label{lemmadefOk2}  Let $\overline{D} = (D,(g_v)_{v\in \Sigma_K}) \in \widehat{\Div}(X)_\bR$. If $D$ is big, then 
 \[ \widehat{\Delta}_{\nu}(\overline{D})   = \{ (\alpha, t) \in \Delta_{\nu}(D)\times \bR_+ \ | \ \alpha \in \Delta_\nu(V_\bullet^t(\overline{D}))  \}\]
and 
 \[(d+1)!\mu_{\bR^{d+1}}(\widehat{\Delta}_{\nu}(\overline{D}))= \widehat{\vol}(\overline{D}),\]
where $\mu_{\bR^{d+1}}$ is the Lebesgue measure on $\bR^{d+1}$.
 \end{lemma}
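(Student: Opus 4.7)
The plan is to prove both assertions simultaneously through a measure comparison between two closed convex subsets of $\bR^{d+1}$. Let $S := \{(\alpha, t) \in \Delta_\nu(D) \times \bR_+ : \alpha \in \Delta_\nu(V_\bullet^t(\overline{D}))\}$. The containment $S \subseteq \widehat{\Delta}_\nu(\overline{D})$ is immediate from the definition of $G_{\overline{D},\nu}$ as a supremum. I would first check that both sets are closed and convex: $\widehat{\Delta}_\nu(\overline{D})$ is closed and convex by the upper semi-continuity and concavity of $G_{\overline{D},\nu}$ recalled in §\ref{paragconcave}; closedness of $S$ follows from the closedness of each Okounkov body $\Delta_\nu(V_\bullet^t(\overline{D}))$ together with the monotonicity $V_\bullet^{t'}(\overline{D}) \subseteq V_\bullet^t(\overline{D})$ for $t \leq t'$, and convexity of $S$ from the submultiplicativity of the sup-norm at each place (which gives $V_n^{nt}(\overline{D}) \cdot V_m^{ms}(\overline{D}) \subseteq V_{n+m}^{nt+ms}(\overline{D})$ and hence stability of $S$ under convex combinations with rational weights, extended to all reals by closedness).

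I would then compute both Lebesgue measures. Applying Fubini along the $t$-axis and invoking Theorem \ref{thmBC} yields
\[(d+1)! \mu_{\bR^{d+1}}(\widehat{\Delta}_\nu(\overline{D})) = (d+1)! \int_{\Delta_\nu(D)} \max\{0, G_{\overline{D},\nu}\} d\mu_{\bR^d} = \widehat{\vol}(\overline{D}),\]
which is already the second equality of the lemma. Applying Fubini along the $\alpha$-axis to $S$ gives $\mu_{\bR^{d+1}}(S) = \int_0^\infty \mu_{\bR^d}(\Delta_\nu(V_\bullet^t(\overline{D}))) dt$. Setting $\theta(\overline{D}) := \sup G_{\overline{D},\nu}$, for $t < \theta(\overline{D})$ the graded linear series $V_\bullet^t(\overline{D})$ contains an ample series by \cite[Lemma 1.6]{BC}, so $\mu_{\bR^d}(\Delta_\nu(V_\bullet^t(\overline{D}))) = \vol(V_\bullet^t(\overline{D}))/d!$ by \cite[Theorem 2.13]{LazMus}; for $t \geq \theta(\overline{D})$ the contribution is null. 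Combining with Boucksom--Chen's integral formula $\widehat{\vol}(\overline{D}) = (d+1) \int_0^\infty \vol(V_\bullet^t(\overline{D})) dt$ (a consequence of \cite[Theorem 1.11]{BC}, which already underlies the proof of Theorem \ref{thmBC}), one obtains $(d+1)! \mu_{\bR^{d+1}}(S) = \widehat{\vol}(\overline{D})$.

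Having $S \subseteq \widehat{\Delta}_\nu(\overline{D})$ with both closed, convex, and of equal Lebesgue measure, the set-theoretic equality follows from a standard convex-geometric argument: when $\widehat{\vol}(\overline{D}) > 0$, any interior point of $\widehat{\Delta}_\nu(\overline{D})$ missing the closed set $S$ would produce an open ball of positive measure inside $\widehat{\Delta}_\nu(\overline{D}) \setminus S$, contradicting equality of measures; when $\widehat{\vol}(\overline{D}) = 0$, both $\widehat{\Delta}_\nu(\overline{D})$ and $S$ collapse to the slice $\{(\alpha, 0) : G_{\overline{D},\nu}(\alpha) \geq 0\} \times \{0\}$ via upper semi-continuity of $G_{\overline{D},\nu}$ together with the vanishing of $\mu_{\bR^d}(\Delta_\nu(V_\bullet^t(\overline{D})))$ for $t > 0$. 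The main obstacle is the clean invocation of Boucksom--Chen's arithmetic volume integral formula and the identification $\mu_{\bR^d}(\Delta_\nu(V_\bullet^t(\overline{D}))) = \vol(V_\bullet^t(\overline{D}))/d!$ on a full-measure subset of $[0,\theta(\overline{D}))$; this is exactly where the bigness of $D$ enters, via Lemma \ref{lemmainfG} and \cite[Lemma 1.6]{BC}, by ensuring that $V_\bullet^t(\overline{D})$ contains an ample series below the threshold $\theta(\overline{D})$.
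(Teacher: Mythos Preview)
Your argument for the second equality is correct and is exactly what the paper does: Fubini along the $t$-axis together with Theorem~\ref{thmBC}.

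For the first equality, however, your route is more circuitous than the paper's and hides a genuine gap. The paper simply invokes \cite[Remark~1.10]{BC} (with details in \cite[(3.20.1)]{MLKR}): that remark establishes the \emph{left-continuity} of $t\mapsto \Delta_\nu(V_\bullet^t(\overline{D}))$, i.e.\ $\Delta_\nu(V_\bullet^t(\overline{D}))=\bigcap_{s<t}\Delta_\nu(V_\bullet^s(\overline{D}))$, which is equivalent to saying that the supremum defining $G_{\overline{D},\nu}(\alpha)$ is attained. Once this is known, the reverse inclusion $\widehat{\Delta}_\nu(\overline{D})\subseteq S$ is immediate: if $G_{\overline{D},\nu}(\alpha)\ge t$ then $\alpha\in\Delta_\nu(V_\bullet^{G_{\overline{D},\nu}(\alpha)}(\overline{D}))\subseteq\Delta_\nu(V_\bullet^t(\overline{D}))$.

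The gap in your argument is precisely the closedness of $S$. Monotonicity together with closedness of each slice handles sequences $(\alpha_n,t_n)\to(\alpha,t)$ with $t_n\ge t$ infinitely often, but for sequences with $t_n\uparrow t$ you only obtain $\alpha\in\bigcap_{s<t}\Delta_\nu(V_\bullet^s(\overline{D}))$, and passing to $\Delta_\nu(V_\bullet^t(\overline{D}))$ is exactly the left-continuity statement above. Your treatment of the zero-volume case has the same issue: showing that $\{(\alpha,0):G_{\overline{D},\nu}(\alpha)\ge 0\}$ coincides with $\{(\alpha,0):\alpha\in\Delta_\nu(V_\bullet^0(\overline{D}))\}$ again requires left-continuity at $t=0$. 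So the key input you need to make your measure-comparison argument rigorous is the very fact that renders the detour unnecessary; once \cite[Remark~1.10]{BC} is in hand, the first equality is a one-line consequence and the volume computation for $S$, the convexity check, and the convex-geometry endgame can all be dropped.
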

 \begin{proof} The first equality follows from \cite[Remark 1.10]{BC}  (see \cite[(3.20.1)]{MLKR} for details). The second one is the first part of Theorem \ref{thmBC}.
 \end{proof}

We end this section with a characterization of pseudo-effective adelic $\bR$-Cartier divisors.
\begin{lemma}\label{lemmaNOpseff} Let $\overline{D} = (D,(g_v)_{v\in \Sigma_K}) \in \widehat{\Div}(X)_\bR$. Then $\overline{D}$ is pseudo-effective if and only if 
$\widehat{\Delta}_{\nu}(\overline{D}) \ne \emptyset$.
\end{lemma}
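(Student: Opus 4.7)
My plan is to prove the two implications separately. For the forward direction ($\overline{D}$ pseudo-effective $\Rightarrow \widehat{\Delta}_\nu(\overline{D}) \neq \emptyset$), fix an ample adelic $\bR$-Cartier divisor $\overline{A}_0$ on $X$. By Lemma \ref{lemmacapNO},
\[\widehat{\Delta}_\nu(\overline{D}) \;=\; \bigcap_{m\geq 1} \widehat{\Delta}_\nu\!\left(\overline{D}+\tfrac{1}{m}\overline{A}_0\right),\]
a decreasing family. For each $m$, the divisor $\tfrac{1}{m}\overline{A}_0$ is ample (hence big), so pseudo-effectivity of $\overline{D}$ forces $\overline{D}+\tfrac{1}{m}\overline{A}_0$ to be big, and in particular its underlying $\bR$-Cartier divisor to be big. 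Lemma \ref{lemmadefOk2} then yields $\mu_{\bR^{d+1}}(\widehat{\Delta}_\nu(\overline{D}+\tfrac{1}{m}\overline{A}_0)) = \widehat{\vol}(\overline{D}+\tfrac{1}{m}\overline{A}_0)/(d+1)! > 0$, so every term is nonempty. Each is also compact, since $\Delta_\nu(D+\tfrac{1}{m}A_0)$ is a compact convex body and the concave transform is upper semicontinuous, hence bounded above, on it. A decreasing nested sequence of nonempty compact subsets of $\bR^{d+1}$ has nonempty intersection, so $\widehat{\Delta}_\nu(\overline{D}) \neq \emptyset$.

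For the reverse direction, nonemptiness of $\widehat{\Delta}_\nu(\overline{D})$ already forces $D$ itself to be pseudo-effective by Definition \ref{defiarithmNO}. The main additional ingredient I would establish is the Minkowski-type inclusion
\[\widehat{\Delta}_\nu(\overline{E}_1) + \widehat{\Delta}_\nu(\overline{E}_2) \;\subseteq\; \widehat{\Delta}_\nu(\overline{E}_1+\overline{E}_2)\]
for all $\overline{E}_1,\overline{E}_2 \in \widehat{\Div}(X)_\bR$. When both $E_1$ and $E_2$ are big, this follows from Lemma \ref{lemmadefOk2} together with the multiplicativity of small sections: if $s_i \in \widehat{\Gamma}(X,n\overline{E}_i-nt_i\overline{\xi})$, then $\|s_1 s_2\|_{v,\sup}\leq \|s_1\|_{v,\sup}\|s_2\|_{v,\sup}$ gives $s_1s_2 \in \widehat{\Gamma}(X,n(\overline{E}_1+\overline{E}_2)-n(t_1+t_2)\overline{\xi})$, while the valuation satisfies $\nu(s_1s_2)=\nu(s_1)+\nu(s_2)$. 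The general case is reached by adding $\tfrac{1}{m}\overline{A}_0$ to each factor to land in the big regime, and then passing to the limit via Lemma \ref{lemmacapNO}. Granted this, let $\overline{B}$ be any big adelic $\bR$-Cartier divisor; the standard comparison of $\widehat{h}^0$ with $h^0$ forces $B$ itself to be big, hence $D+B$ is big (pseudo-effective plus big). Choosing any $(\alpha_0,t_0)\in\widehat{\Delta}_\nu(\overline{D})$, the Minkowski inclusion shows that $\widehat{\Delta}_\nu(\overline{D}+\overline{B})$ contains the translate $(\alpha_0,t_0)+\widehat{\Delta}_\nu(\overline{B})$, which has positive $(d+1)$-dimensional Lebesgue measure because $\widehat{\vol}(\overline{B})>0$. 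Applying Lemma \ref{lemmadefOk2} to $\overline{D}+\overline{B}$ gives $\widehat{\vol}(\overline{D}+\overline{B})>0$, so $\overline{D}+\overline{B}$ is big for every big $\overline{B}$, i.e., $\overline{D}$ is pseudo-effective.

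The main technical step is the Minkowski inclusion and its extension from the big case to the pseudo-effective case; the underlying multiplicativity of small sections is elementary, but the limiting argument uses the full strength of the limit definition of $\widehat{\Delta}_\nu$ introduced in §\ref{paragarithmNO}.
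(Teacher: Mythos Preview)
Your forward direction is essentially identical to the paper's: both use Lemma~\ref{lemmacapNO} to write $\widehat{\Delta}_\nu(\overline{D})$ as a decreasing intersection of the nonempty compact bodies $\widehat{\Delta}_\nu(\overline{D}+\tfrac{1}{m}\overline{A}_0)$ and invoke Cantor's intersection theorem.

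Your reverse direction is correct but follows a genuinely different route. The paper argues as follows: nonemptiness of $\widehat{\Delta}_\nu(\overline{D}+\tfrac{1}{m}\overline{A}_0)$ produces a point $\alpha_m$ with $G_{\overline{D}+\overline{A}_0/m,\nu}(\alpha_m)\geq 0$, and then an external reference (\cite[Remark~5.7 and Lemma~6.3]{BaEM}) is invoked to conclude that $\overline{D}+\tfrac{1}{m}\overline{A}_0$ is pseudo-effective for every $m$, hence so is $\overline{D}$. You instead establish the Minkowski-type inclusion $\widehat{\Delta}_\nu(\overline{E}_1)+\widehat{\Delta}_\nu(\overline{E}_2)\subseteq\widehat{\Delta}_\nu(\overline{E}_1+\overline{E}_2)$ and combine it with the measure formula of Lemma~\ref{lemmadefOk2} to verify $\widehat{\vol}(\overline{D}+\overline{B})>0$ directly. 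This has the advantage of being self-contained within the paper's framework and of yielding the Minkowski inclusion as a by-product; the paper's route is shorter but outsources the key step. Two small points to tighten: the passage from level-wise multiplicativity of small sections to the Okounkov-body inclusion requires the ``power trick'' $s_1^{m_2}s_2^{m_1}\in V^{t_1+t_2}_{m_1m_2}$ to align degrees (as in the proof of Lemma~\ref{lemmaConcinf}), and the assertion that arithmetic bigness of $\overline{B}$ forces geometric bigness of $B$ deserves a precise citation (e.g.\ \cite[Proposition~4.6]{MoriwakiMAMS}) rather than the phrase ``standard comparison''.
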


\begin{proof} Let $\overline{A}_0 \in \widehat{\Div}(X)_\bR$ be an ample adelic $\bR$-Cartier divisor on $X$. Assume that $\overline{D}$ is pseudo-effective. Then $\overline{D} + \frac{1}{m}\overline{A}_0$ is big for any integer $m \geq 1$. It follows that $\widehat{\Delta}_{\nu}(\overline{D}) $ is the intersection of a decreasing nested sequence of convex bodies, and therefore it is non-empty by Cantor's intersection theorem. Conversely, assume that $\widehat{\Delta}_{\nu}(\overline{D}) \ne \emptyset$. Then $D$ is pseudo-effective and $\widehat{\Delta}_{\nu}(\overline{D}+\frac{1}{m}\overline{A}_0) \ne \emptyset$ for any integer  $m \geq 1$. By definition, there exists $\alpha_m \in \Delta_{\nu}(D+ \frac{1}{m}A_0)$ such that $G_{\overline{D}+ \overline{A}_0/m,\nu}(\alpha_m) \geq 0$. 
 By  \cite[Remark 5.7 and Lemma 6.3]{BaEM},
$\overline{D}+\frac{1}{m}\overline{A}_0$ is pseudo-effective for any $m\geq 1$. Therefore $\overline{D}$ is pseudo-effective.
\end{proof}

\section{Positivity via arithmetic Okounkov bodies}\label{sectionpositivNO}
 We prove Theorem \ref{thmkeyintro} in § \ref{paragpropkey}. In § \ref{paragnefNO}, we  prove Corollary \ref{corominabsintro} and a refinement of Theorem \ref{thmnefintro} characterizing arithmetic nefness (Corollaries \ref{coronef} and \ref{corominabsG}). We then give several characterizations of arithmetic ampleness in terms of arithmetic Okounkov bodies in §~\ref{paragampleNO}, where we prove variants of Theorem \ref{thmampleintro} (see Corollaries \ref{thmadmample} and \ref{thminfample}).

\subsection{Height of points and arithmetic concave transform}\label{paragpropkey} 
 We shall establish Theorem \ref{thmkeyintro} as a  consequence of Proposition \ref{propCauchy}.

\begin{theorem}\label{thmkey}  Let $\overline{D} =(D,(g_v)_{v\in \Sigma_K}) \in \widehat{\Div}(X)_{\bR}$ and let  $\nu \in \mathcal{V}(X_{\overline{K}})$ be a valuation of maximal rank on $\Rat(X_{\overline{K}})$ centered at a point $x \in X_{\overline{K}}$.  If $D$ is nef, then $0_{\bR^d} \in \Delta_{\nu}(D)$ and 
  \[\widehat{h}_{\overline{D}}(x) \geq G_{\overline{D}, \nu}(0_{\bR^d}).\] 
\end{theorem}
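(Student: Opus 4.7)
The strategy is to treat the two assertions separately, reducing in both cases to the setting where $X$ is smooth, $x$ is regular, and $D\in\Div(X)$ is an ample Cartier divisor.

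The assertion $0_{\bR^d}\in\Delta_\nu(D)$ is a quick consequence of Lemma \ref{lemmaampleNO} together with the pseudo-effective definition of the Okounkov body (Definition \ref{defiGeomNOpseff}): for every ample Cartier divisor $A$ on $X$, set $E=D+A$ and equip $E$ with an adelic structure $\overline{E}$ that is generated by strictly small $\bR$-sections (possible because $E$ is ample); Lemma \ref{lemmaampleNO} then gives $0_{\bR^d}\in\Delta_\nu(V_\bullet^0(\overline{E}))\subseteq\Delta_\nu(E)=\Delta_\nu(D+A)$, and intersecting over ample $A$ yields $0_{\bR^d}\in\Delta_\nu(D)$.

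For the main inequality, in the reduced setting, the heart of the argument is the implication
\[G_{\overline{D},\nu}(0_{\bR^d})>0\ \Longrightarrow\ \widehat{h}_{\overline{D}}(x)\geq 0,\]
which I would deduce by combining Proposition \ref{propCauchy} with the following auxiliary claim: if $G_{\overline{D},\nu}(0_{\bR^d})>0$, then for every $\sigma>0$ there exist an integer $m\geq 1$ and a non-zero small section $s\in\widehat{\Gamma}(X,m\overline{D})$ with $\ord_x s<m\sigma$. Granting this, fix $\varepsilon>0$, let $\rho=\rho(\overline{D},x,\varepsilon)$ be as in Proposition \ref{propCauchy}, set $\sigma=\varepsilon/\rho$, and plug the resulting section into \eqref{eqkeyineq}: since $h_{m\overline{D}}(s)\leq 0$ by smallness,
\[\widehat{h}_{\overline{D}}(x)+\varepsilon\;\geq\;-\frac{h_{m\overline{D}}(s)}{m}-\rho\frac{\ord_x s}{m}\;\geq\;-\rho\sigma\;=\;-\varepsilon,\]
so letting $\varepsilon\to 0$ yields $\widehat{h}_{\overline{D}}(x)\geq 0$. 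To upgrade to the full quantitative bound, apply the implication to the twisted divisor $\overline{D}(t)=\overline{D}-t\overline{\xi}$ (with $\overline{\xi}$ of arithmetic degree one, as in \S\ref{paragconcave}) for every real $t<G_{\overline{D},\nu}(0_{\bR^d})$: the underlying divisor is unchanged, $G_{\overline{D}(t),\nu}(0_{\bR^d})=G_{\overline{D},\nu}(0_{\bR^d})-t>0$, whence $\widehat{h}_{\overline{D}}(x)-t=\widehat{h}_{\overline{D}(t)}(x)\geq 0$; letting $t\to G_{\overline{D},\nu}(0_{\bR^d})^-$ concludes.

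The reduction of the general case to this setting proceeds in two steps. To handle singularities of $X$ or $x$, I would apply de Jong's alteration theorem to obtain a generically finite surjection $\pi\colon X'\to X$ with $X'$ smooth projective, pick a closed point $x'\in X'_{\overline{K}}$ over $x$, extend $\nu$ to a maximal-rank valuation $\nu'$ on $\Rat(X'_{\overline{K}})$ centered at $x'$, and use the compatibility of heights, geometric Okounkov bodies and the concave transform with $\pi^*$ (up to explicit scaling factors that cancel on both sides of the inequality). To pass from $D$ ample Cartier to $D$ nef $\bR$-Cartier, fix an ample $\overline{A}_0$ generated by strictly small $\bR$-sections, apply the reduced case to $\overline{D}+\frac{1}{m}\overline{A}_0$ (whose underlying divisor is ample; further approximation by $\bQ$-Cartier divisors and clearing of denominators yields the Cartier case), and let $m\to\infty$, using linearity of the height on the left and the continuity of the concave transform from Lemma \ref{lemmaConcalt} on the right. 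The main obstacle I anticipate is the auxiliary claim, which extracts a single small section of controlled order from the abstract condition $0_{\bR^d}\in\Delta_\nu(V_\bullet^t(\overline{D}))$; the natural approach is to exploit that $\nu$ is centered at $x$ in order to bound $\ord_x s$ by a suitable functional of $\nu(s)$, combined with the ultrametric property $\nu(\sum_i a_i s_i)\geq_{\mathrm{lex}}\min_i\nu(s_i)$ (for $a_i\in\overline{K}^\times$) to descend from $\bR$-linear combinations to individual small sections.
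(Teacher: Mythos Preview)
Your proposal is correct and follows essentially the same three-step approach as the paper: the core case via Proposition \ref{propCauchy} together with your auxiliary claim (which is the paper's Claim \ref{claimexists0}, proved exactly via the ultrametric-type bound you sketch), then de Jong's alteration for singular $x$, then approximation by ample $\bQ$-divisors via Lemma \ref{lemmaConcalt}. The only cosmetic difference is that the paper formulates the claim with $h_{m\overline{D}}(s)<-mt_0$ and obtains $\widehat{h}_{\overline{D}}(x)\geq t_0$ directly, bypassing your rescaling by $\overline{\xi}$; also, in the alteration step you should let the extension $\nu'$ of $\nu$ determine the center $x'$ rather than choosing $x'$ first.
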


\begin{proof} Since $D$ is nef, we have $0_{\bR^d} \in \Delta_{\nu}(D)$ by \cite[(20) page 1059-30]{BoucksomOkounkov}. We shall divide the proof of the inequality $\widehat{h}_{\overline{D}}(x) \geq t_0:= G_{\overline{D}, \nu}(0_{\bR^d})$ into three steps. 

\textbf{Step 1.} We suppose that $x \in X_{\overline{K}}$ is a regular point and that $D \in \Div(X)_\bQ$ is an ample $\bQ$-Cartier divisor.  
 We have  $0_{\bR^{d}} \in \Delta_{\nu}(V_\bullet^{t_0}(\overline{D}))$ by Lemma \ref{lemmadefOk2}, and in particular $V_\bullet^{t_0}(\overline{D}) \ne \{0\}$. Let us prove the following claim.
  \begin{claim}\label{claimexists0} For any $\sigma > 0$, there exist  an integer $m \geq 1$  and a non-zero global section $s \in \Gamma(X, m\overline{D})^\times$ such that $h_{m\overline{D}}(s) < -mt_0$ and $\ord_x s <  m\sigma$.
  \end{claim}
\begin{proof}
 Let $m \geq 1$ be an integer and let $\widetilde{s} \in V_m^{mt_0}(\overline{D}) \otimes_K \overline{K}\setminus \{0\}$. By definition, there exist a family $s_1, \ldots, s_{r_m} \in \Gamma(X, m\overline{D})^\times$ and $\lambda_1, \ldots, \lambda_{r_m} \in \overline{K}^\times$ such that $h_{m\overline{D}}(s_i) < -mt_0$ for every $i \in \{1, \ldots,r_m\}$ and $\widetilde{s} = \sum_{i = 1}^{r_m}\lambda_i s_i$. Assume by contradiction that the conclusion of the claim does not hold and let $i \in \{1, \ldots, r_m\}$. Then $ \ord_x s_i \geq m\sigma$. Let $z_1, \ldots, z_d$ be a system of parameters at $x$ and let $\bm{\beta}_x =  \min_{1 \leq j \leq d} \nu(z_j) \in \bZ^d$. Since $x=c_X(\nu)$, we have $\bm{\beta}_x >_{\mathrm{lex}}  0_{\bZ^d}$. 
  By definition of $\ord_x(s_i)$,  $\divi(s_i)$ is locally defined by a function $f \in \mathfrak{m}_{X_{\overline{K}},x}^{m\sigma}\cO_{X_{\overline{K}},x}$ around $x$, and therefore $\nu(s_i) = \nu(f) \geq_{\mathrm{lex}} m\sigma \bm{\beta}_x$. 
  Hence 
\[\nu(\widetilde{s}) \geq_{\mathrm{lex}} \min_{1 \leq i \leq r_m} \nu (s_i) \geq_{\mathrm{lex}} m\sigma \bm{\beta}_x.\]
 It follows that
 \[ \Gamma_{\nu} (V_\bullet^{t_0}(\overline{D})) \subseteq \{ \bm{\alpha} \in \bQ^d \ | \ \bm{\alpha} \geq_{\mathrm{lex}} \sigma \bm{\beta}_x\},\]
 and thus $\bm{\alpha} \geq_{\mathrm{lex}} \sigma \bm{\beta}_x >_{\mathrm{lex}} 0_{\bR^d}$ for every  $\bm{\alpha} \in \Delta_{\nu} (V_\bullet^{t_0}(\overline{D}))$.
 This is a contradiction since $0_{\bR^d} \in \Delta_{\nu} (V_\bullet^{t_0}(\overline{D}))$.
\end{proof}    
Let $\varepsilon$ and $\sigma$ be positive real numbers. By Claim \ref{claimexists0}, there exist $m \in \bN\setminus\{0\}$ and $s \in \Gamma(X, m\overline{D})^\times$  with $h_{m\overline{D}}(s) <-mt_0$ and $\ord_x(s) < m\sigma$.  By Proposition \ref{propCauchy}, we have 
 \[\widehat{h}_{\overline{D}} (x) +\varepsilon \geq - \frac{1}{m}(h_{m \overline{D}}(s) + \rho(\overline{D},x, \varepsilon) \ord_x s)> t_0 -\sigma |\rho(\overline{D},x, \varepsilon)|,\]
 where $\rho(\overline{D},x, \varepsilon) \in \bR$ depends only on $\overline{D}$, $x$ and $\varepsilon$. By letting $\sigma$ tend to zero, we obtain $\widehat{h}_{\overline{D}}(x) \geq t_0 - \varepsilon$, and therefore $\widehat{h}_{\overline{D}}(x) \geq t_0$. That proves the theorem in the case where $x$ is regular and $D$ is an ample  $\bQ$-Cartier divisor. 
 
 \textbf{Step 2.} We continue to suppose that $D \in \Div(X)$ is an ample $\bQ$-Cartier divisor, but we allow $x \in X_{\overline{K}}$ to be a singular point.  
   By de Jong's alteration Theorem \cite[Theorem 4.1]{dJong},  
 there exist a finite extension $K'$ of $K$ and a smooth geometrically integral projective variety $X'$ over $K'$, together with a surjective generically finite morphism $\mu \colon X' \rightarrow X_{K'}$. We denote by $\overline{D}' \in \widehat{\Div}(X')_\bR$ the pullback of $\overline{D}$ to $X'$, and we let $\nu' \in \mathcal{V}(X'_{\overline{K}})$ be a valuation of maximal rank extending $\nu$ (such a valuation exists by \cite[Chap.\ VI, § 6, Theorem 11, page 26]{ZS}). The center $x' = c_{X'}(\nu')$ of $\nu'$ satisfies $\mu(x') = x$, and in particular $\widehat{h}_{\overline{D}'}(x') = \widehat{h}_{\overline{D}}(x)$.   
 We have $0_{\bR^d} \in \Delta_{\nu}(V_{\bullet}^{t_0}(\overline{D}))$ by Lemma \ref{lemmadefOk2} (recall that $t_0 = G_{\overline{D},\nu}(0_{\bR^d})$). It follows that for any positive integer $n$, there exist $m_n \in \bN \setminus \{0\}$ and 
 $s_n \in V_{m_n}^{m_nt_0}(\overline{D}) \setminus \{0\}$ such that $\alpha_n := \nu(s_n)/m_n$ satisfies $|{\alpha}_n| < 1/n$. By construction, $\mu^*s_n \in V_{m_n}^{m_nt_0}(\overline{D}')$ and $\nu'(\mu^*s_n) = \nu(s_n)$ for every $n > 0$. Therefore $\alpha_n \in \Delta_{\nu'}(V_{\bullet}^{t_0}(\overline{D}'))$. Letting $n$ tend to infinity, it follows that $0_{\bR^d} \in \Delta_{\nu'}(V_{\bullet}^{t_0}(\overline{D}'))$ and thus $G_{\overline{D}', \nu'}(0_{\bR^d}) \geq t_0$. By Step 1, we have  
 \[\widehat{h}_{\overline{D}}(x) = \widehat{h}_{\overline{D}'}(x') \geq G_{\overline{D}', \nu'}(0_{\bR^d}) \geq t_0.\]
 
\textbf{Step 3.} Finally, we consider the general case. There exist ample adelic $\bR$-Cartier divisors $\overline{A}_1, \ldots, \overline{A}_n$ such that for any real number $\delta > 0$, there exists $\mathbf{t} = (t_1, \ldots, t_d) \in \bR_+^n$ such that $|\mathbf{t}| < \delta$ and 
  \[\overline{D}_{\mathbf{t}} =\overline{D} + \sum_{i=1}^n t_i \overline{A}_i \in \widehat{\Div}(X)_\bQ.\]
  Let $\delta > 0$ be a real number and let  $\mathbf{t} = (t_1, \ldots, t_n) \in \bR_+^n$ be a $n$-tuple such that $|\mathbf{t}| < \delta$ and 
 $\overline{D}_{\mathbf{t}}  \in \widehat{\Div}(X)_\bQ$. Since $D$ is nef, the underlying divisor ${D}_{\mathbf{t}}$ is ample. By Step 2, we have \[h_{\overline{D}_{\mathbf{t}}}(x) \geq G_{\overline{D}_{\mathbf{t}},\nu}(0_{\bR^d}) \geq  G_{\overline{D},\nu}(0_{\bR^d}),\]
 where the second inequality follows from the definition  of $G_{\overline{D},\nu}$ (Definition \ref{defConcPseff}). On the other hand, $h_{\overline{A}_i}(x) > 0$ for every $i\in \{1, \ldots, n\}$, hence 
 \[ \widehat{h}_{\overline{D}}(x) + \delta \sum_{i = 1}^n \widehat{h}_{\overline{A}_i}(x) \geq \widehat{h}_{\overline{D}_{\mathbf{t}}}(x) \geq G_{\overline{D},\nu}(0_{\bR^d}).\]
 Letting $\delta$ tend to zero, we conclude that $\widehat{h}_{\overline{D}}(x)\geq G_{\overline{D},\nu}(0_{\bR^d})$.
\end{proof}

\subsection{Characterization of arithmetic nefness}\label{paragnefNO}
 As a direct consequence of Theorem \ref{thmkey}, we have the following refinement of Theorem \ref{thmnefintro} (see Remark \ref{remacharnefconcOk} below).  
 \begin{coro}\label{coronef} Let $\overline{D} = (D,(g_v)_{v \in \Sigma_K}) \in \widehat{\Div}(X)_\bR$ be semi-positive and let $\mathcal{V}_0 \subseteq \mathcal{V}(X_{\overline{K}})$ be a subset such that for every closed point $x \in X_{\overline{K}}$, there exists $\nu \in \mathcal{V}_0$ with center $c_X(\nu) = x$. The following conditions are equivalent:
\begin{enumerate}
\item\label{critnef}  $\overline{D}$ is nef;
\item\label{critcoroGall} for every $\nu \in \mathcal{V}_0$, we have $G_{\overline{D}, \nu}(\alpha) \geq 0$ for all $\alpha \in \Delta_{\nu}(D)$;
\item\label{critcoroGzero} for every $\nu \in \mathcal{V}_0$, we have $G_{\overline{D}, \nu}(0_{\bR^d}) \geq 0$;
\item\label{critcoroGallexists}  for every closed point $x \in X_{\overline{K}}$, there exists a valuation of maximal rank $\nu \in \mathcal{V}_0$ centered at $x$ such that  $G_{\overline{D}, \nu}(\alpha) \geq 0$ for all $\alpha \in \Delta_{\nu}(D)$;
\item\label{critcoroGzeroexists} for every closed point $x \in X_{\overline{K}}$, there exists a valuation of maximal rank $\nu \in \mathcal{V}_0$ centered at $x$ such that $G_{\overline{D}, \nu}(0_{\bR^d}) \geq 0$.
\setcounter{count}{\value{enumi}}
\end{enumerate}
Moreover, if $D$ is big then the above conditions are equivalent to the following:
\begin{enumerate} \setcounter{enumi}{\value{count}}
\item\label{critcoroGexists} there exists a valuation of maximal rank $\nu \in \mathcal{V}(X_{\overline{K}})$ such that $G_{\overline{D}, \nu}(\alpha) \geq 0$ for all $\alpha \in \Delta_{\nu}(D)$.
\end{enumerate}
\end{coro}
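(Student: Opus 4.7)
The plan is to establish the cycle of implications $(1) \Rightarrow (2) \Rightarrow (3) \Rightarrow (5) \Rightarrow (1)$, together with $(2) \Rightarrow (4) \Rightarrow (5)$, and to treat the extra equivalence with $(6)$ separately in the big case. Since $\overline{D}$ is semi-positive, $D$ is nef, so $0_{\bR^d} \in \Delta_\nu(D)$ for every $\nu \in \mathcal{V}(X_{\overline{K}})$ (as already noted in Theorem \ref{thmkey}). The implications $(2) \Rightarrow (3)$, $(2) \Rightarrow (4)$, $(3) \Rightarrow (5)$, and $(4) \Rightarrow (5)$ are then immediate from the covering hypothesis on $\mathcal{V}_0$. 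For $(5) \Rightarrow (1)$, Theorem \ref{thmkey} yields $\widehat{h}_{\overline{D}}(x) \geq G_{\overline{D},\nu}(0_{\bR^d}) \geq 0$ for every closed point $x \in X_{\overline{K}}$ (choosing $\nu \in \mathcal{V}_0$ centered at $x$ as in $(5)$), so $\zeta_{\mathrm{abs}}(\overline{D}) \geq 0$, and combined with semi-positivity this gives nefness.

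The main obstacle is $(1) \Rightarrow (2)$, which I would establish in three reductions. Suppose first that $\overline{D}$ is nef with $D$ big and $\zeta := \zeta_{\mathrm{abs}}(\overline{D}) > 0$. Applying the second part of Theorem \ref{thmBC} successively with $t = 0$ and with $t = \zeta$ (the equality clause holds in both cases since $\zeta_{\mathrm{abs}}(\overline{D}) \geq t$) gives
\[
h_{\overline{D}}(X) = (d+1)!\int_{\Delta_\nu(D)} \max\{0,G_{\overline{D},\nu}\}\,d\mu_{\bR^d} = (d+1)!\int_{\Delta_\nu(D)} \max\{\zeta,G_{\overline{D},\nu}\}\,d\mu_{\bR^d}.
\]
The pointwise inequality $\max\{\zeta,G\} \geq \max\{0,G\}$ combined with equality of the integrals forces equality almost everywhere on $\Delta_\nu(D)$, and since $\zeta > 0$ this in turn forces $G_{\overline{D},\nu} \geq \zeta > 0$ almost everywhere on $\Delta_\nu(D)$. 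Upper semi-continuity of $G_{\overline{D},\nu}$ then propagates the inequality $G_{\overline{D},\nu} \geq 0$ to all of $\Delta_\nu(D)$, since $\Delta_\nu(D)$ has nonempty interior ($D$ being big) and every point is a limit of points of the full-measure set on which $G \geq 0$.

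To remove the hypothesis $\zeta_{\mathrm{abs}}(\overline{D}) > 0$ while still assuming $D$ big, I would apply the previous step to $\overline{D} + \overline{\xi}$, which is nef with $\zeta_{\mathrm{abs}}(\overline{D}+\overline{\xi}) = \zeta_{\mathrm{abs}}(\overline{D}) + 1 \geq 1 > 0$ (semi-positivity is preserved, as adding $\overline{\xi}$ only shifts Green functions by constants, preserving plurisubharmonicity at Archimedean places and relative nefness of models at non-Archimedean places). Since $G_{\overline{D}+\overline{\xi},\nu} = G_{\overline{D},\nu} + 1$, this yields $G_{\overline{D},\nu} \geq 0$ on $\Delta_\nu(D)$. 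Finally, to drop the bigness of $D$, pick any ample $\overline{A} \in \widehat{\Div}(X)_\bR$; then $\overline{D}+\overline{A}$ is nef with $D+A$ ample (hence big), so by the previous step $G_{\overline{D}+\overline{A},\nu} \geq 0$ on $\Delta_\nu(D+A) \supseteq \Delta_\nu(D)$, and Definition \ref{defConcPseff} yields $G_{\overline{D},\nu}(\alpha) = \inf_{\overline{A}\text{ ample}} G_{\overline{D}+\overline{A},\nu}(\alpha) \geq 0$ for every $\alpha \in \Delta_\nu(D)$.

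For the extra equivalence with $(6)$ in the big case, $(2) \Rightarrow (6)$ is trivial (pick any $\nu \in \mathcal{V}_0$, which is nonempty by the covering assumption), and $(6) \Rightarrow (2)$ follows at once from Lemma \ref{lemmainfG}, which asserts that $\inf_{\alpha \in \Delta_\nu(D)} G_{\overline{D},\nu}(\alpha)$ does not depend on $\nu$ when $D$ is big: if this infimum is nonnegative for some $\nu \in \mathcal{V}(X_{\overline{K}})$, it is nonnegative for every $\nu$, and in particular for every $\nu \in \mathcal{V}_0$. The genuine difficulty of the whole argument lies in the implication $(1) \Rightarrow (2)$, specifically in the fact that Theorem \ref{thmBC} only produces a useful identity when $\zeta_{\mathrm{abs}}(\overline{D}) > 0$; the twist by $\overline{\xi}$ is the key device that handles the borderline case $\zeta_{\mathrm{abs}}(\overline{D}) = 0$.
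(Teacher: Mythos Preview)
Your overall structure matches the paper's: the trivial implications, $(5)\Rightarrow(1)$ via Theorem~\ref{thmkey}, and the big-case equivalence with $(6)$ via Lemma~\ref{lemmainfG} are all handled the same way. The route for $(1)\Rightarrow(2)$ is, however, genuinely different. The paper argues that for any ample $\overline{A}$ with $\overline{D}+\overline{A}\in\widehat{\Div}(X)_\bQ$ the sum $\overline{D}+\overline{A}$ is ample; then the arithmetic Nakai--Moishezon criterion (Theorem~\ref{thmNM}) together with \cite[Theorem~7.4.1]{CMadelic} force $V^0_\bullet(\overline{D}+\overline{A})=V_\bullet(D+A)$, whence $G_{\overline{D}+\overline{A},\nu}\geq 0$ by Lemma~\ref{lemmainfG}, and one concludes via Lemma~\ref{lemmaConcalt}. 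Your approach through the integral identities of Theorem~\ref{thmBC} and upper semi-continuity bypasses both Nakai--Moishezon and the external reference; it is in fact exactly the mechanism the paper deploys later to prove Corollary~\ref{coronefHodge}, so you have effectively front-loaded that argument.

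There is a small arithmetic slip in your second reduction. Your first step establishes $G_{\overline{D},\nu}\geq\zeta$ almost everywhere, but after invoking upper semi-continuity you only record the weaker conclusion $G_{\overline{D},\nu}\geq 0$ everywhere. Applying that stated conclusion to $\overline{D}+\overline{\xi}$ yields $G_{\overline{D}+\overline{\xi},\nu}\geq 0$, hence only $G_{\overline{D},\nu}\geq -1$, not $\geq 0$ as you claim. The fix is immediate: upper semi-continuity actually propagates the full inequality $G\geq\zeta$ to all of $\Delta_\nu(D)$ (every point is a limit of points in the dense full-measure set), so step one already gives $G_{\overline{D}+\overline{\xi},\nu}\geq\zeta_{\mathrm{abs}}(\overline{D}+\overline{\xi})\geq 1$. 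Alternatively, the twist is unnecessary: for nef $\overline{D}$ with $D$ big, comparing the equality cases of Theorem~\ref{thmBC} at $t=0$ and at any $t<0$ directly forces $G_{\overline{D},\nu}\geq 0$ almost everywhere.
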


\begin{proof}
 We first prove that $\eqref{critnef} \Rightarrow \eqref{critcoroGall}$. Assume that $\overline{D}$ is nef and let $\overline{A}$ be an ample adelic $\bR$-Cartier divisor such that $\overline{D} + \overline{A} \in \widehat{\Div}(X)_\bQ$. Since $\overline{D}$ is nef, $\overline{D} + \overline{A}$ is ample. By Theorem \ref{thmNM} and \cite[Theorem 7.4.1]{CMadelic}, we have $V^0_\bullet(\overline{D} + \overline{A}) = V_\bullet(D+A)$. Therefore $G_{\overline{D} + \overline{A}, \nu} \geq 0$ for every $\nu \in \mathcal{V}_0$ by Lemma \ref{lemmainfG}, and \eqref{critcoroGall} holds by  Lemma \ref{lemmaConcalt}. The implications  $\eqref{critcoroGall} \Rightarrow \eqref{critcoroGallexists} \implies \eqref{critcoroGzeroexists} $ and  $\eqref{critcoroGall} \Rightarrow \eqref{critcoroGzero} \implies \eqref{critcoroGzeroexists} $ are trivial. If $\eqref{critcoroGzeroexists}$ holds, then $\widehat{h}_{\overline{D}}(x) \geq 0$ for every closed point $x \in X_{\overline{K}}$, and therefore $\overline{D}$ is nef.  Finally, we note that when $D$ is big then $\eqref{critcoroGall} \Leftrightarrow \eqref{critcoroGexists}$ by Lemma \ref{lemmainfG}.
\end{proof}

\begin{rema}\label{remacharnefconcOk} Theorem \ref{thmnefintro} in the introduction is nothing but a translation in terms of arithmetic Okounkov bodies of the equivalences $\eqref{critnef} \Leftrightarrow \eqref{critcoroGzero} \Leftrightarrow \eqref{critcoroGzeroexists}$ given by Corollary \ref{coronef} in the case $\mathcal{V}_0 = \mathcal{V}(X_{\overline{K}})$. 
\end{rema}

We end this subsection with a slight generalization of Corollary \ref{corominabsintro}, that gives new descriptions of the absolute minimum $\zeta_{\mathrm{abs}}(\overline{D}) = \inf_{x \in X(\overline{K})} \widehat{h}_{\overline{D}}(x)$. 

\begin{coro}\label{corominabsG} Let $\overline{D} = (D,(g_v)_{v \in \Sigma_K}) \in \widehat{\Div}(X)_\bR$ be semi-positive and let $\mathcal{V}_0 \subseteq \mathcal{V}(X_{\overline{K}})$ be a subset such that for every closed point $x \in X_{\overline{K}}$, there exists $\nu \in \mathcal{V}_0$ with center $c_X(\nu) = x$. Then 
\[ \zeta_{\mathrm{abs}}(\overline{D}) = \inf_{\nu \in \mathcal{V}_0} G_{\overline{D}, \nu}(0_{\bR^d}).\] 
Moreover, if $D$ is big then  $\zeta_{\mathrm{abs}}(\overline{D})=\inf_{\alpha \in \Delta_{\nu}(D)} G_{\overline{D}, \nu}(\alpha)$
for any $\nu \in \mathcal{V}(X_{\overline{K}})$. 
\end{coro}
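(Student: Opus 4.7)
My plan is to split the first equality $\zeta_{\mathrm{abs}}(\overline{D}) = \inf_{\nu \in \mathcal{V}_0} G_{\overline{D},\nu}(0_{\bR^d})$ into its two inequalities and then leverage both directions to handle the second equality. The inequality $\zeta_{\mathrm{abs}}(\overline{D}) \geq \inf_{\nu \in \mathcal{V}_0} G_{\overline{D},\nu}(0_{\bR^d})$ will be a direct consequence of Theorem \ref{thmkey}: for every $\nu \in \mathcal{V}_0$ with center $x = c_X(\nu)$, Theorem \ref{thmkey} provides $\widehat{h}_{\overline{D}}(x) \geq G_{\overline{D},\nu}(0_{\bR^d})$, and since by hypothesis the centers of $\mathcal{V}_0$ sweep all closed points of $X_{\overline{K}}$, taking the infimum on both sides yields the conclusion.

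For the reverse inequality, the plan is to show more strongly that $G_{\overline{D},\nu}(0_{\bR^d}) \geq \zeta_{\mathrm{abs}}(\overline{D})$ for every $\nu \in \mathcal{V}(X_{\overline{K}})$, in two steps. First, assume $D$ is ample. For any real $t < \zeta_{\mathrm{abs}}(\overline{D})$, the twist $\overline{D}(t) = \overline{D} - t\overline{\xi}$ of Remark~\ref{remaheight}~\eqref{rematwistheight} is semi-positive, has ample underlying divisor, and satisfies $\zeta_{\mathrm{abs}}(\overline{D}(t)) > 0$; Zhang's arithmetic Nakai--Moishezon theorem (Theorem \ref{thmNM}) forces $\overline{D}(t)$ to be ample, hence nef. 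The implication $\eqref{critnef} \Rightarrow \eqref{critcoroGall}$ of Corollary~\ref{coronef} then yields $G_{\overline{D}(t),\nu}(\alpha) \geq 0$ for every $\alpha \in \Delta_\nu(D)$, i.e.\ $G_{\overline{D},\nu}(\alpha) \geq t$; letting $t \uparrow \zeta_{\mathrm{abs}}(\overline{D})$ gives the desired bound uniformly in $\alpha$. Second, in the general semi-positive case, I would fix an ample adelic $\bR$-divisor $\overline{A}_0$ generated by strictly small $\bR$-sections. For every $\varepsilon > 0$, $\overline{D} + \varepsilon \overline{A}_0$ remains semi-positive and now has ample underlying divisor, and the elementary estimate $\zeta_{\mathrm{abs}}(\overline{D}+\varepsilon\overline{A}_0) \geq \zeta_{\mathrm{abs}}(\overline{D}) + \varepsilon\zeta_{\mathrm{abs}}(\overline{A}_0)$ combined with the ample case gives
\[G_{\overline{D}+\varepsilon\overline{A}_0,\nu}(0_{\bR^d}) \geq \zeta_{\mathrm{abs}}(\overline{D}) + \varepsilon\zeta_{\mathrm{abs}}(\overline{A}_0).\]
Passing to the limit $\varepsilon \to 0$ via Lemma~\ref{lemmaConcalt} then produces $G_{\overline{D},\nu}(0_{\bR^d}) \geq \zeta_{\mathrm{abs}}(\overline{D})$ (the case $\zeta_{\mathrm{abs}}(\overline{D}) = -\infty$ is vacuous).

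For the second equality, assume $D$ is big. Lemma~\ref{lemmainfG} ensures that $M := \inf_{\alpha \in \Delta_\nu(D)} G_{\overline{D},\nu}(\alpha)$ is independent of the choice of $\nu \in \mathcal{V}(X_{\overline{K}})$. One inequality is immediate: for any $\nu' \in \mathcal{V}_0$, this independence combined with $0_{\bR^d} \in \Delta_{\nu'}(D)$ (which holds since $D$ is nef) yields $M = \inf_\alpha G_{\overline{D},\nu'}(\alpha) \leq G_{\overline{D},\nu'}(0_{\bR^d})$, and taking the infimum over $\nu' \in \mathcal{V}_0$ together with the first equality gives $M \leq \zeta_{\mathrm{abs}}(\overline{D})$. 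The reverse inequality is already contained in the two-step argument above, applied pointwise at arbitrary $\alpha \in \Delta_\nu(D) \subseteq \Delta_\nu(D+\varepsilon A_0)$, since Step~1 delivers the uniform bound $G_{\overline{D}+\varepsilon\overline{A}_0,\nu}(\alpha) \geq \zeta_{\mathrm{abs}}(\overline{D}) + \varepsilon\zeta_{\mathrm{abs}}(\overline{A}_0)$ for all such $\alpha$. The main obstacle I anticipate is precisely the limiting step: one must verify that Lemma~\ref{lemmaConcalt} genuinely transfers the conclusion from ``ample underlying divisor'' (where Nakai--Moishezon bites) to the general semi-positive situation, and that no loss occurs when the additive error $\varepsilon\zeta_{\mathrm{abs}}(\overline{A}_0)$ vanishes in the limit.
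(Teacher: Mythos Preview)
Your proof is correct and follows essentially the same twisting strategy as the paper, but you take an unnecessary detour. The paper's argument is simply: for any $t\in\bR$, the twist $\overline{D}(t)=\overline{D}-t\overline{\xi}$ is semi-positive with $\zeta_{\mathrm{abs}}(\overline{D}(t))=\zeta_{\mathrm{abs}}(\overline{D})-t$ and $G_{\overline{D}(t),\nu}=G_{\overline{D},\nu}-t$, so Corollary~\ref{coronef} gives
\[
\zeta_{\mathrm{abs}}(\overline{D})\geq t \;\Longleftrightarrow\; \overline{D}(t)\text{ is nef} \;\Longleftrightarrow\; \inf_{\nu\in\mathcal{V}_0} G_{\overline{D},\nu}(0_{\bR^d})\geq t,
\]
and similarly with $\inf_\alpha G_{\overline{D},\nu}(\alpha)$ when $D$ is big. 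Both inequalities fall out at once.

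Your Step~1/Step~2 split is redundant for the following reason: once $t<\zeta_{\mathrm{abs}}(\overline{D})$, the twist $\overline{D}(t)$ is semi-positive with $\zeta_{\mathrm{abs}}(\overline{D}(t))>0$, hence \emph{nef by definition}. You do not need $D$ ample, you do not need Nakai--Moishezon to upgrade to ample, and consequently you do not need the approximation by $\overline{D}+\varepsilon\overline{A}_0$ in Step~2. The implication $\eqref{critnef}\Rightarrow\eqref{critcoroGall}$ of Corollary~\ref{coronef} already applies to arbitrary semi-positive $\overline{D}(t)$ (its proof handles the non-ample case through Lemma~\ref{lemmaConcalt} internally). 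So your argument works, but the passage through Theorem~\ref{thmNM} and Lemma~\ref{lemmaConcalt} can be deleted without loss.
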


\begin{proof}   Let $\overline{\xi} = (0, (\xi_v)_{v \in \Sigma_K})$ be the adelic Cartier divisor defined in §~\ref{paragconcave}. 
  For $t \in \bR$, we let $\overline{D}(t) = \overline{D} - t\overline{\xi}$. Note that since $\overline{D}$ is semi-positive, so is $\overline{D}(t)$. By definition, $\overline{D}(t)$ is nef if and only if $\zeta_{\mathrm{abs}}(\overline{D})-t = \zeta_{\mathrm{abs}}(\overline{D}(t)) \geq 0$. Moreover, by construction we have $G_{\overline{D}(t),\nu}(\alpha) = G_{\overline{D},\nu}(\alpha) -t$ for any $\nu \in \mathcal{V}(X_{\overline{K}})$ and $\alpha \in \Delta_{\nu}(D)$. By Corollary \ref{coronef}, we have 
  \[\zeta_{\mathrm{abs}}(\overline{D}) \geq t \Longleftrightarrow  \inf_{\nu \in \mathcal{V}_0} G_{\overline{D}, \nu}(0_{\bR^d}) \geq t,\] 
 and if moreover $D$ is big then
\[\zeta_{\mathrm{abs}}(\overline{D}) \geq t \Longleftrightarrow  \inf_{\alpha \in \Delta_\nu(D)} G_{\overline{D}, \nu}(\alpha) \geq t\] 
 for any $\nu \in \mathcal{V}(X_{\overline{K}})$. The corollary follows.
 \end{proof}

\subsection{Characterizations of arithmetic ampleness}\label{paragampleNO}
Let us begin with a direct consequence of Corollary \ref{corominabsG}.
 
 \begin{coro}\label{coroample}  Let $\overline{D} = (D,(g_v)_{v \in \Sigma_K}) \in \widehat{\Div}(X)_\bR$ be a semi-positive adelic $\bR$-Cartier divisor such that $D$ is ample, and let $\nu \in \mathcal{V}(X_{\overline{K}})$ be a valuation of maximal rank. Then $\overline{D}$ is ample if and only if $\inf_{\alpha \in \Delta_{\nu}(D)}G_{\overline{D}, \nu}(\alpha) > 0$. 
 \end{coro}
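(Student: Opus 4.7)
The plan is to deduce this corollary immediately by combining two results already in hand: the arithmetic Nakai--Moishezon criterion (Theorem \ref{thmNM}) and the description of the absolute minimum in terms of the concave transform (Corollary \ref{corominabsG}). No new technical ingredient is needed, so I expect the argument to be essentially a two-line chain of equivalences.

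First, I would invoke the second assertion of Corollary \ref{corominabsG}: since $D$ is ample it is in particular big, so for any $\nu \in \mathcal{V}(X_{\overline{K}})$ one has
\[\zeta_{\mathrm{abs}}(\overline{D}) = \inf_{\alpha \in \Delta_\nu(D)} G_{\overline{D},\nu}(\alpha).\]
In particular, the sign of the infimum of $G_{\overline{D},\nu}$ on $\Delta_\nu(D)$ does not depend on the choice of $\nu$, and it equals the sign of $\zeta_{\mathrm{abs}}(\overline{D})$.

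Next, I would appeal to the equivalence $\eqref{itemcritample} \Leftrightarrow \eqref{itemcritampleminpts}$ of Theorem \ref{thmNM}: under the standing assumptions that $\overline{D}$ is semi-positive and $D$ is ample, arithmetic ampleness of $\overline{D}$ is equivalent to $\zeta_{\mathrm{abs}}(\overline{D}) > 0$. Chaining this with the previous identity gives
\[\overline{D} \text{ is ample} \iff \zeta_{\mathrm{abs}}(\overline{D}) > 0 \iff \inf_{\alpha \in \Delta_\nu(D)} G_{\overline{D},\nu}(\alpha) > 0,\]
which is exactly the claim.

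The only point to watch is that Corollary \ref{corominabsG} requires $D$ big for the characterization in terms of a single valuation $\nu$; this is automatic here. There is no genuine obstacle: the real work was already carried out in Theorem \ref{thmkey} (and hence in Proposition \ref{propCauchy}) on one side, and in the arithmetic Nakai--Moishezon theorem on the other.
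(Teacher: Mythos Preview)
Your proposal is correct and matches the paper's own proof essentially verbatim: the paper simply states that by Corollary \ref{corominabsG} the statement is a reformulation of the arithmetic Nakai--Moishezon criterion (Theorem \ref{thmNM}). Your observation that $D$ ample implies $D$ big, needed to invoke the second part of Corollary \ref{corominabsG}, is the only point worth making explicit, and you have done so.
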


\begin{proof} By Corollary \ref{corominabsG}, the statement is a reformulation of the arithmetic Nakai--Moishezon criterion (Theorem \ref{thmNM}).
\end{proof}

Our next goal is to prove variants of Corollary \ref{coroample} without assuming that $D$ is ample. To do so, we shall  combine  the results of the previous subsection with characterizations of geometric ampleness in terms of Okounkov bodies due to Park and Shin \cite{ParkShin} (which extend the results of Küronya and Lozovanu \cite{KL, KLDoc} to the case of arbitrary characteristic). For this purpose, we focus on valuations arising from admissible and infinitesimal flags constructed in Examples \ref{examplevaladm} and \ref{examplevalinf}.

\subsubsection{Admissible flags} Recall that an admissible flag $Y_\bullet$ on $X_{\overline{K}}$ is a sequence 
\[Y_\bullet\colon X_{\overline{K}} = Y_0 \supsetneq Y_1 \supsetneq \cdots \supsetneq Y_{d-1} \supsetneq Y_{d} = \{x\}\]
such that for every $i \in \{0, \ldots, d\}$, $Y_i \subseteq X_{\overline{K}}$ is a subvariety of codimension $i$, smooth at $x$. We call $x \in X_{\overline{K}}$ the center of $Y_\bullet$. Given an admissible flag $Y_\bullet$ on $X_{\overline{K}}$, we let $\nu_{Y_\bullet} \in \mathcal{V}(X_{\overline{K}})$ be the valuation of maximal rank constructed in Example \ref{examplevaladm}. For any $\overline{D} = (D,(g_v)_{v \in \Sigma_K}) \in \widehat{\Div}(X)_\bR$, we denote by $\widehat{\Delta}_{Y_\bullet}(\overline{D}) = \widehat{\Delta}_{\nu_{Y_\bullet}}(\overline{D})$ the arithmetic Okounkov body defined with respect to $\nu_{Y_\bullet}$. For any real number $\lambda > 0$, we let $\Delta_{\lambda} \subseteq \bR^d_+$ be the standard simplex of size $\lambda$:
\[\Delta_{\lambda} = \{(\alpha_1, \ldots, \alpha_d) \in \bR_+^d \ | \ \sum_{i=1}^d \alpha_i \leq \lambda\}.\]
We have the following arithmetic analogue of \cite[Corollary 3.2]{KLDoc}.
\begin{coro}\label{thmadmample} Assume that $X$ is smooth.  Let $\overline{D} = (D,(g_v)_{v \in \Sigma_K}) \in \widehat{\Div}(X)_\bR$ be a semi-positive adelic $\bR$-Cartier divisor. Then the following assertions are equivalent:
\begin{enumerate}
\item\label{critadmample} $\overline{D}$ is ample;
\item\label{critadmall} there exists a real number $\xi > 0$ with the following property: for every admissible flag $Y_\bullet$ on $X_{\overline{K}}$, there exists $\lambda> 0$ such that $\Delta_{\lambda} \times \{\xi\} \subseteq \widehat{\Delta}_{Y_\bullet}(\overline{D})$;
\item\label{critadmexists} there exists a real number $\xi > 0$ with the following property: for every closed point $x \in X_{\overline{K}}$, there exists an admissible flag $Y_\bullet$ centered at $x$ and a real number $\lambda > 0$ such that $\Delta_{\lambda} \times \{\xi\} \subseteq \widehat{\Delta}_{Y_\bullet}(\overline{D})$.
\end{enumerate}
\end{coro}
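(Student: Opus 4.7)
The plan is to split the single inclusion $\Delta_{\lambda}\times\{\xi\}\subseteq \widehat{\Delta}_{Y_\bullet}(\overline{D})$ into its two components: the geometric condition $\Delta_{\lambda}\subseteq \Delta_{Y_\bullet}(D)$ and the arithmetic condition $G_{\overline{D},\nu_{Y_\bullet}}(\alpha)\geq \xi$ on $\Delta_{\lambda}$. The first is governed by the Küronya--Lozovanu criterion \cite[Corollary 3.2]{KLDoc}, valid in arbitrary characteristic by \cite[Theorem 1.1]{ParkShin}, which characterizes geometric ampleness of $D$ on a smooth projective variety in terms of admissible flags. The second will be handled by Corollary \ref{corominabsG}, which identifies the concave transform's lower bound with $\zeta_{\mathrm{abs}}(\overline{D})$; positivity of this quantity is in turn equivalent to arithmetic ampleness by the arithmetic Nakai--Moishezon theorem (Theorem \ref{thmNM}).

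For $(1)\Rightarrow(2)$, assume $\overline{D}$ is ample. Then $D$ is ample (hence big) and $\zeta_{\mathrm{abs}}(\overline{D})>0$ by Theorem \ref{thmNM}. I set $\xi:=\zeta_{\mathrm{abs}}(\overline{D})>0$. Crucially, by the ``big'' case of Corollary \ref{corominabsG}, we have $G_{\overline{D},\nu_{Y_\bullet}}(\alpha)\geq \zeta_{\mathrm{abs}}(\overline{D})=\xi$ for \emph{every} $\alpha\in \Delta_{Y_\bullet}(D)$ and every admissible flag $Y_\bullet$; this uniform-in-$Y_\bullet$ bound is what allows $\xi$ to be chosen independently of the flag. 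For any given admissible flag $Y_\bullet$, the Küronya--Lozovanu/Park--Shin criterion applied to $D_{\overline{K}}$ yields $\lambda>0$ with $\Delta_{\lambda}\subseteq \Delta_{Y_\bullet}(D)$, and putting the two ingredients together gives $\Delta_{\lambda}\times\{\xi\}\subseteq \widehat{\Delta}_{Y_\bullet}(\overline{D})$. The implication $(2)\Rightarrow(3)$ is immediate since any closed point of the smooth variety $X_{\overline{K}}$ admits an admissible flag centered at it.

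For $(3)\Rightarrow(1)$, let $\xi>0$ be as in $(3)$. Projecting the inclusion $\Delta_{\lambda}\times\{\xi\}\subseteq \widehat{\Delta}_{Y_\bullet}(\overline{D})$ onto the first factor, and using that $\widehat{\Delta}_{Y_\bullet}(\overline{D})\subseteq \Delta_{Y_\bullet}(D)\times \bR_+$, I obtain $\Delta_{\lambda}\subseteq \Delta_{Y_\bullet}(D)$ for some admissible flag centered at every closed point of $X_{\overline{K}}$. The pointwise form of the Küronya--Lozovanu/Park--Shin criterion then forces $D$ to be ample; in particular $D$ is big. On the arithmetic side, since $0_{\bR^d}\in \Delta_{\lambda}$, the hypothesis gives $G_{\overline{D},\nu_{Y_\bullet}}(0_{\bR^d})\geq \xi$ for a valuation of maximal rank centered at each closed point $x\in X_{\overline{K}}$. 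Taking $\mathcal{V}_0$ to be the family of valuations $\nu_{Y_\bullet}$ coming from admissible flags, Corollary \ref{corominabsG} yields $\zeta_{\mathrm{abs}}(\overline{D})\geq \xi>0$. Combined with semi-positivity of $\overline{D}$ and the ampleness of $D$, Theorem \ref{thmNM} concludes that $\overline{D}$ is ample.

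The main (mild) obstacle is the need for uniformity of $\xi$ in $(2)$: without it the geometric and arithmetic inputs would decouple only flag by flag, and one could not reverse the implication cleanly. This uniformity is exactly the content of Corollary \ref{corominabsG} (or equivalently Lemma \ref{lemmainfG}), which is the point where the fact that $D$ is big is used. A minor verification is that the Küronya--Lozovanu/Park--Shin criterion, stated over an algebraically closed base field, applies to $X_{\overline{K}}$; this is immediate since $X$ is smooth and geometrically integral.
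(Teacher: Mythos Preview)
Your proposal is correct and follows essentially the same route as the paper: geometric ampleness of $D$ is handled via the K\"uronya--Lozovanu/Park--Shin criterion, and the arithmetic input is the positivity of $\zeta_{\mathrm{abs}}(\overline{D})$ together with Theorem~\ref{thmNM}. The only cosmetic difference is that in $(3)\Rightarrow(1)$ you route through Corollary~\ref{corominabsG} (with $\mathcal{V}_0$ taken to be the specific flags supplied by (3), one per point) whereas the paper invokes Theorem~\ref{thmkey} directly; since Corollary~\ref{corominabsG} is just a repackaging of Theorem~\ref{thmkey}, the two arguments are equivalent.
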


\begin{proof} Assume that  $\overline{D}$ is ample  and let $Y_\bullet$ be an admissible flag on $X$. By \cite[Theorem 3.7]{ParkShin}, there exists $\lambda > 0$ such that $\Delta_{\lambda} \subseteq  \Delta_{Y_\bullet}(D):= \Delta_{\nu_{Y_\bullet}}(D)$.  By Corollary \ref{coroample}, $\xi := \inf_{\alpha \in \Delta_{Y_\bullet}(D)} G_{\overline{D},\nu_{Y_\bullet}}(\alpha)> 0$.  Then $\Delta_{\lambda} \times \{\xi\} \subseteq \widehat{\Delta}_{Y_\bullet}(\overline{D})$ by definition of $\widehat{\Delta}_{Y_\bullet}(\overline{D})$. This shows that $\eqref{critadmample} \Rightarrow \eqref{critadmall}$. The implication  $\eqref{critadmall} \Rightarrow \eqref{critadmexists}$ is trivial, so let us prove $\eqref{critadmexists} \Rightarrow \eqref{critadmample}$. Assuming $\eqref{critadmexists}$, we first observe that $D$ is ample by \cite[Theorem 1.1]{ParkShin}, and moreover  $\zeta_{\mathrm{abs}}(\overline{D})\geq \xi > 0$ by Theorem \ref{thmkey}. Therefore $\overline{D}$ is ample by the arithmetic Nakai--Moishezon theorem (Theorem \ref{thmNM}). 
\end{proof}

\subsubsection{Infinitesimal flags}\label{paraginfinitesimal} Let $x \in X_{\overline{K}}$ be a smooth closed point and let $\widetilde{X} \rightarrow X_{\overline{K}}$ be the blow-up of $X_{\overline{K}}$ at $x$, with exceptional divisor $E$. Recall that an infinitesimal flag over $x$ is an admissible flag on $\widetilde{X}$ 
\[\widetilde{Y}_\bullet\colon \ \widetilde{X} = \widetilde{Y}_0 \supsetneq \widetilde{Y}_1 \supsetneq \cdots \supsetneq \widetilde{Y}_{d-1} \supsetneq \widetilde{Y}_{d}\]
such that $\widetilde{Y}_i$ is an $(d-i)$-dimensional linear subspace of $E =  \widetilde{Y}_1 \simeq \bP^{d-1}_{\overline{K}}$ for every $i \in \{1, \ldots, d\}$.  Given an infinitesimal flag over $x \in X_{\overline{K}}$, we let $\nu_{\widetilde{Y}_\bullet} \in \mathcal{V}(X_{\overline{K}})$ be the valuation of maximal rank constructed in Example \ref{examplevalinf}. For any $\overline{D} = (D,(g_v)_{v \in \Sigma_K}) \in \widehat{\Div}(X)_\bR$, we denote by $\widehat{\Delta}_{\widetilde{Y}_\bullet}(\overline{D}) = \widehat{\Delta}_{\nu_{\widetilde{Y}_\bullet}}(\overline{D})$ be the arithmetic Okounkov body defined with respect to $\nu_{\widetilde{Y}_\bullet}$. 
Recall that for $\lambda > 0$, $\Delta_{\lambda}^{-1} \subseteq \bR^d$ denotes the inverted standard simplex of size $\lambda$:
\[\Delta_{\lambda}^{-1} = \{(\alpha_1, \ldots, \alpha_d) \in \bR_+^d \ | \ \alpha_1 \leq \lambda, \ \alpha_2 + \cdots + \alpha_d \leq \alpha_1\}.\]
The following corollary is an arithmetic analogue of \cite[Theorem B]{KL}.
\begin{coro}\label{thminfample}  Assume that $X$ is smooth and let $\overline{D} = (D,(g_v)_{v \in \Sigma_K}) \in \widehat{\Div}(X)_\bR$ be a semi-positive adelic $\bR$-Cartier divisor.   The following assertions are equivalent:
\begin{enumerate}
\item\label{critinfample} $\overline{D}$ is ample;
\item\label{critinfSesh} there exists a real number $\lambda > 0$ such that $ \Delta_{\lambda}^{-1} \ \times \{\lambda\}\subseteq \widehat{\Delta}_{\widetilde{Y}_{\bullet}}(\overline{D})$ for every  infinitesimal flag $\widetilde{Y}_{\bullet}$ on $X_{\overline{K}}$;
\item\label{critinfSeshexists} there exists a real number $\lambda > 0$ such that for every closed point $x \in X_{\overline{K}}$, there exists an infinitesimal flag $\widetilde{Y}_{\bullet}$ over $x$ with $\Delta_{\lambda}^{-1} \times \{\lambda\} \subseteq \widehat{\Delta}_{\widetilde{Y}_{\bullet}}(\overline{D})$.
\item\label{critinfall} there exists a real number $\xi > 0$ with the following property: for every infinitesimal flag $\widetilde{Y}_{\bullet}$ on $X_{\overline{K}}$, there exists a real number $\lambda > 0$ such that $ \Delta_{\lambda}^{-1} \ \times \{\xi\}\subseteq \widehat{\Delta}_{\widetilde{Y}_{\bullet}}(\overline{D})$;
\item\label{critinfexists} there exists a real number $\xi > 0$ with the following property: for every closed point $x \in X_{\overline{K}}$, there exists an infinitesimal flag $\widetilde{Y}_{\bullet}$ over $x$ and a real number $\lambda > 0$ such that $\Delta_{\lambda}^{-1} \times \{\xi\} \subseteq \widehat{\Delta}_{\widetilde{Y}_{\bullet}}(\overline{D})$.
\end{enumerate}

\end{coro}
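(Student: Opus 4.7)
The strategy mirrors that of Corollary \ref{thmadmample}: combine the geometric characterization of ampleness in terms of inverted simplices sitting inside geometric Okounkov bodies (due to Küronya--Lozovanu in characteristic zero and Park--Shin in general \cite{KL,ParkShin}) with Corollary \ref{coroample} (which controls the infimum of the concave transform via arithmetic ampleness) and with the key lower bound of Theorem \ref{thmkey}. My plan is to show the cyclic implications $(1)\Rightarrow (2) \Rightarrow (3) \Rightarrow (5)\Rightarrow (1)$ and $(2) \Rightarrow (4) \Rightarrow (5)$; since the latter two implications and $(2)\Rightarrow(3) \Rightarrow (5)$ are immediate (specialize to a single flag, or take $\xi = \lambda$), the only real content lies in $(1)\Rightarrow(2)$ and $(5)\Rightarrow(1)$.

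For $(1)\Rightarrow(2)$, I would first invoke the geometric half of \cite[Theorem 1.1]{ParkShin} (the arbitrary characteristic extension of \cite[Theorem B]{KL}): since $D$ is ample, there exists a uniform $\lambda_0 > 0$ such that $\Delta^{-1}_{\lambda_0} \subseteq \Delta_{\widetilde{Y}_\bullet}(D)$ for \emph{every} infinitesimal flag $\widetilde{Y}_\bullet$ on $X_{\overline{K}}$. Next, since $\overline{D}$ is ample, Corollary \ref{coroample} gives $\inf_{\alpha \in \Delta_{\nu}(D)} G_{\overline{D},\nu}(\alpha) > 0$ for some valuation, and by Lemma \ref{lemmainfG} this infimum is independent of the chosen valuation of maximal rank when $D$ is big; call its common value $\xi_0 > 0$ (in fact $\xi_0 = \zeta_{\mathrm{abs}}(\overline{D})$ by Corollary \ref{corominabsG}). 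Setting $\lambda := \min(\lambda_0, \xi_0)$, we have $\Delta^{-1}_\lambda \subseteq \Delta^{-1}_{\lambda_0} \subseteq \Delta_{\widetilde{Y}_\bullet}(D)$ and $G_{\overline{D}, \nu_{\widetilde{Y}_\bullet}} \geq \xi_0 \geq \lambda$ on the whole Okounkov body, so by definition of $\widehat{\Delta}_{\widetilde{Y}_\bullet}(\overline{D})$ we obtain $\Delta^{-1}_\lambda \times \{\lambda\} \subseteq \widehat{\Delta}_{\widetilde{Y}_\bullet}(\overline{D})$, uniformly in $\widetilde{Y}_\bullet$.

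For $(5) \Rightarrow (1)$, assume condition (5) holds with parameter $\xi > 0$. Projecting the inclusion $\Delta^{-1}_{\lambda} \times \{\xi\} \subseteq \widehat{\Delta}_{\widetilde{Y}_\bullet}(\overline{D}) \subseteq \Delta_{\widetilde{Y}_\bullet}(D) \times \bR_+$ onto the first factor yields $\Delta^{-1}_\lambda \subseteq \Delta_{\widetilde{Y}_\bullet}(D)$ for a suitable infinitesimal flag over every closed point, so the geometric characterization \cite[Theorem 1.1]{ParkShin} guarantees that $D$ is ample. On the arithmetic side, the inclusion also gives $0_{\bR^d} \in \Delta^{-1}_\lambda$ and $G_{\overline{D},\nu_{\widetilde{Y}_\bullet}}(0_{\bR^d}) \geq \xi$; since $\overline{D}$ is semi-positive, $D$ is nef, and Theorem \ref{thmkey} applied to each closed point $x$ with the flag provided by (5) yields $\widehat{h}_{\overline{D}}(x) \geq \xi > 0$. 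Hence $\zeta_{\mathrm{abs}}(\overline{D}) \geq \xi > 0$, and the arithmetic Nakai--Moishezon criterion (Theorem \ref{thmNM}) finishes the argument.

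The main obstacle I anticipate is the uniformity of $\lambda$ across \emph{all} infinitesimal flags in step $(1) \Rightarrow (2)$: geometrically this is the nontrivial input from \cite{KL,ParkShin} (equivalent to a positive lower bound on the Seshadri-type constant), while arithmetically Lemma \ref{lemmainfG} is the key tool ensuring that the positive lower bound on $G_{\overline{D},\nu_{\widetilde{Y}_\bullet}}$ can be chosen independently of the flag. Everything else is a careful bookkeeping of definitions.
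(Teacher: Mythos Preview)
Your proposal is correct and follows essentially the same route as the paper: prove $(1)\Rightarrow(2)$ by combining the geometric inverted-simplex inclusion with the valuation-independent lower bound on the concave transform (Lemma~\ref{lemmainfG} and Corollary~\ref{coroample}), note that $(2)\Rightarrow(3)\Rightarrow(5)$ and $(2)\Rightarrow(4)\Rightarrow(5)$ are trivial, and close the loop $(5)\Rightarrow(1)$ via \cite[Theorem~1.1]{ParkShin}, Theorem~\ref{thmkey}, and Theorem~\ref{thmNM}. The only cosmetic difference is that, in $(1)\Rightarrow(2)$, the paper makes the uniformity of the geometric $\lambda_0$ explicit by invoking Seshadri's criterion \cite[Theorem~1.4.13]{LazI} together with \cite[Theorem~1.2~(2)]{ParkShin}, rather than citing \cite[Theorem~1.1]{ParkShin} directly for a uniform constant; this is precisely the ``nontrivial input'' you flagged in your last paragraph.
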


\begin{proof}
 We first prove $\eqref{critinfample} \Rightarrow \eqref{critinfSesh}$.  Assume that $\overline{D}$ is ample. By Lemma \ref{lemmainfG} and Corollary \ref{coroample}, there exists a real number $\lambda'> 0$ such that $\inf G_{\overline{D}, \nu} \geq \lambda'$ for any valuation of maximal rank $\nu \in \mathcal{V}(X_{\overline{K}})$. 
 For any closed point $x \in X_{\overline{K}}$, we denote by $\epsilon(D,x)$ the Seshadri constant of $D$ at $x$ (see \cite[§~2.3]{ParkShin}). Since $D$ is ample, we have
 \[\epsilon(D) := \inf_{x \in X_{\overline{K}}} \epsilon(D,x)> 0\]
 by Seshadri's criterion for ampleness \cite[Theorem 1.4.13]{LazI}. Let $\lambda$ be a positive real number with $\lambda < \min\{\epsilon(D),\lambda'\}$, and let $\widetilde{Y}_{\bullet}$ be an infinitesimal flag on $X_{\overline{K}}$. By \cite[Theorem 1.2 (2)]{ParkShin}, we have $\Delta_{\lambda}^{-1} \subseteq {\Delta}_{\widetilde{Y}_{\bullet}}(D):= {\Delta}_{\nu_{\widetilde{Y}_{\bullet}}}(D)$. Since $G_{\overline{D},\nu_{\widetilde{Y}_{\bullet}}}(\alpha) \geq \lambda$ for every $\alpha \in {\Delta}_{\widetilde{Y}_{\bullet}}(D)$, we have $ \Delta_{\lambda}^{-1} \ \times \{\lambda\}\subseteq \widehat{\Delta}_{\widetilde{Y}_{\bullet}}(\overline{D})$. The implications $\eqref{critinfSesh} \Rightarrow \eqref{critinfSeshexists} \Rightarrow \eqref{critinfexists}$ and $\eqref{critinfSesh} \Rightarrow \eqref{critinfall} \Rightarrow \eqref{critinfexists}$ are trivial, and the implication $\eqref{critinfexists} \Rightarrow \eqref{critinfample}$ follows from \cite[Theorem 1.1]{ParkShin} and Theorem \ref{thmkey} as in the proof of Corollary \ref{thminfample}.  
\end{proof}

We end this section by comparing our results with the characterizations of arithmetic positivity of toric metrized divisors established by Burgos Gil, Moriwaki, Philippon and Sombra in \cite{BMPS}.

\begin{rema}\label{rematoric} Let $\overline{D} = (D,(g_v)_{v \in \Sigma_K}) \in \widehat{\Div}(X)_\bR$, and assume that the pair $(X,\overline{D})$ is toric in the sense of \cite[Definition 4.12]{BMPS}.  Let $\theta_{\overline{D}}\colon \Delta_D \rightarrow \bR$ be the roof function introduced in \cite[Definition 4.17]{BMPS}. By \cite[§~4.4]{BC}, we can choose an admissible flag $Y_\bullet$ on $X$ such that $\Delta_{Y_\bullet}(D) = \Delta_D$ and $G_{\overline{D},Y_\bullet} = \theta_{\overline{D}}$. Corollary \ref{coroample} gives the equivalence
\[\overline{D} \text{ is ample } \Longleftrightarrow \ D \text{  is ample, } \overline{D} \text{ is semi-positive and }  \inf_{\alpha \in \Delta_D}\theta_{\overline{D}}(\alpha) > 0,\]
which permits to recover \cite[Theorem 2 (1)]{BMPS} thanks to the characterizations of geometric ampleness and semi-positivity provided by \cite[Propositions 4.7 and 4.19]{BMPS}. Similarly, Corollary \ref{coronef} can be considered as a generalization of \cite[Theorem 2 (2)]{BMPS}. In order to fully generalize \cite[Theorem 2]{BMPS} to the non-toric case, it would remain to characterize semi-positivity of adelic $\bR$-Cartier divisors through convex analysis, in analogy with \cite[Proposition 4.19]{BMPS}.
 
\end{rema}
\section{Applications}\label{sectionapplication}

 The goal of this section is to prove Theorems \ref{coroHSintro} and \ref{thmgenericintro}. We shall deduce both of these statements from Corollary \ref{coronef} and from Boucksom and Chen's Theorem \ref{thmBC} relating arithmetic volumes and heights to the arithmetic concave transform.

\subsection{A converse to the arithmetic Hilbert-Samuel theorem}

The following statement is equivalent to Theorem \ref{coroHSintro} in the introduction (see Remark \ref{remacoroHS} below).
\begin{coro}\label{coronefHodge} Let $\overline{D} = (D,(g_v)_{v \in \Sigma_K}) \in \widehat{\Div}(X)_\bR$ be semi-positive, and let $\nu \in \mathcal{V}(X_{\overline{K}})$ be a be a valuation of maximal rank. If $D$ is big, then 
 the following conditions are equivalent:
\begin{enumerate}
\item\label{critHodgenef}  $\overline{D}$ is nef;
\item\label{critHodgevols} $\widehat{\vol}(\overline{D}) =  h_{\overline{D}}(X)$;
\end{enumerate}
\end{coro}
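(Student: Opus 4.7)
The implication $(1)\Rightarrow(2)$ is the equality case of Moriwaki's arithmetic Hodge index theorem already recorded as Theorem~\ref{thmHodge}, so no new work is needed. For the converse $(2)\Rightarrow(1)$, the strategy is to combine the two Boucksom--Chen integral formulae of Theorem~\ref{thmBC} with the identification
\[
\zeta_{\mathrm{abs}}(\overline{D})=\inf_{\alpha\in\Delta_\nu(D)}G_{\overline{D},\nu}(\alpha)
\]
supplied by Corollary~\ref{corominabsG}, which is available because $D$ is big. Fix once and for all a valuation $\nu\in\mathcal{V}(X_{\overline{K}})$ and write $G:=G_{\overline{D},\nu}$ and $\Delta:=\Delta_\nu(D)$.

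I will argue by contradiction: assume $(2)$ holds but $\zeta:=\zeta_{\mathrm{abs}}(\overline{D})<0$. Theorem~\ref{thmBC} with $t=0$ yields
\[
\widehat{\vol}(\overline{D})=(d+1)!\int_{\Delta}\max\{0,G\}\,d\mu_{\bR^d},
\]
and, when $\zeta$ is finite, the equality case of the same theorem applied with $t=\zeta$ together with $G\geq \zeta=\inf G$ gives
\[
h_{\overline{D}}(X)=(d+1)!\int_{\Delta}\max\{\zeta,G\}\,d\mu_{\bR^d}=(d+1)!\int_{\Delta}G\,d\mu_{\bR^d}.
\]
(If $\zeta=-\infty$, the equality case is unavailable, but using the inequality part of Theorem~\ref{thmBC} with any $t<0$ produces an upper bound on $h_{\overline{D}}(X)$ of the same shape, and the rest of the comparison below goes through verbatim.)

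The key point is that the set $\Omega:=\{\alpha\in\Delta\mid G(\alpha)<0\}$ has strictly positive Lebesgue measure. Indeed, $\Omega$ is non-empty because $\inf G=\zeta<0$, and it is relatively open in $\Delta$ by upper semi-continuity of $G$. Since $D$ is big, $\Delta$ is a convex body of full dimension $d$ in $\bR^d$, so it coincides with the closure of its interior; hence any non-empty relatively open subset of $\Delta$ intersects $\mathrm{int}(\Delta)$ in a non-empty open subset of $\bR^d$, which has positive Lebesgue measure. On $\Omega$ we have $\max\{t,G\}<0=\max\{0,G\}$ for every $t\leq 0$, while the two expressions agree on $\Delta\setminus\Omega$. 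Integrating this strict pointwise inequality gives
\[
h_{\overline{D}}(X)<\widehat{\vol}(\overline{D}),
\]
contradicting $(2)$. Therefore $\zeta_{\mathrm{abs}}(\overline{D})\geq 0$, and since $\overline{D}$ is semi-positive this means $\overline{D}$ is nef.

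The only genuinely delicate step is the positive-measure claim for $\Omega$, for which the upper semi-continuity of the concave transform and the full-dimensionality of $\Delta_\nu(D)$ (both packaged into the Boucksom--Chen framework recalled in \S\ref{sectionBCarithmNO}) are essential; the remainder is a short bookkeeping comparison between the two integral formulae of Theorem~\ref{thmBC}.
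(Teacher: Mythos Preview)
Your proof is correct and follows essentially the same route as the paper: both use the Boucksom--Chen integral formulae of Theorem~\ref{thmBC} together with upper semi-continuity of $G_{\overline{D},\nu}$ and full-dimensionality of $\Delta_\nu(D)$ to force $G_{\overline{D},\nu}\geq 0$, then invoke the nefness criterion of \S\ref{paragnefNO}. The paper's version is marginally more direct---it uses only the \emph{inequality} part of Theorem~\ref{thmBC} for arbitrary $t<0$ (so no case split on whether $\zeta_{\mathrm{abs}}(\overline{D})$ is finite) and concludes via Corollary~\ref{coronef} rather than going through Corollary~\ref{corominabsG}---but the substance is the same.
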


\begin{proof}   We have $ \eqref{critHodgenef} \Rightarrow \eqref{critHodgevols}$ by Theorem \ref{thmHodge}. 
  If  $\widehat{\vol}(\overline{D}) =  h_{\overline{D}}(X)$, then
\[ \int_{\Delta_{\nu}(D)} \max \{0, G_{\overline{D}, \nu}\} d\mu_{\bR^d} = \int_{\Delta_{\nu}(D)} \max \{t, G_{\overline{D},\nu}\} d\mu_{\bR^d} \]
for any real number $t<0$  by Theorem \ref{thmBC}. Therefore $ \mu_{\bR^d}(\{ \alpha \in \Delta_{\nu}(D) \ | \  G_{\overline{D},\nu}(\alpha) < 0\}) = 0$. By upper semi-continuity of $G_{\overline{D},\nu}$, we infer that $G_{\overline{D},\nu}(\alpha) \geq 0$ for any $ \alpha \in \Delta_{\nu}(D)$.  Therefore $\overline{D}$ is nef by Corollary \ref{coronef}. 
\end{proof}

\begin{rema}\label{remacoroHS} Corollary \ref{coronefHodge} is equivalent to Theorem \ref{coroHSintro}. Indeed, for any $\overline{D} = (D,(g_v)_{v \in \Sigma_K}) \in \widehat{\Div}(X)_\bR$ such that $D$ is big and for any $\overline{N} \in \widehat{\Div}(X)_\bR$, we have 
\[\widehat{\vol}(\overline{D}) = \lim_{n\rightarrow \infty} \frac{\widehat{h}^0(X,n\overline{D}+ \overline{N})}{n^{d+1}/(d+1)!}\]
by \cite[Corollary 6.4.10]{CMadelic}. 
\end{rema}

\begin{rema} The assumption that $D$ is big cannot be removed in Corollary \ref{coronefHodge}. To see this, assume that $K$ is a number field and consider the adelic Cartier divisor $\overline{D} = (0, (\xi_v)_{v \in \Sigma_K})$ given by $\xi_v = -2$ if $v$ is archimedean, and $\xi_v = 0$ otherwise. Then $\overline{D}$ is semi-positive, and we have  $\widehat{\vol}(\overline{D}) =  h_{\overline{D}}(X) = 0$. However,  $\widehat{h}_{\overline{D}}(x) = -1$ for every closed point $x \in X_{\overline{K}}$, and therefore $\overline{D}$ is not nef.
\end{rema}

\subsection{Generic nets of small points and subvarieties}

 We now turn to the proof of Theorem \ref{thmgenericintro} (see Theorem \ref{thmgeneric} below). Let us first recall the definition of generic nets of subvarieties.

\begin{defi}\label{defigeneric} We say that a net $(Y_m)_{m \in (\mathcal{I}, \succeq)}$ of subvarieties of $X_{\overline{K}}$ is \emph{generic} if for every Zariski-closed subset $H \varsubsetneq X$, there exists $m_H \in \mathcal{I}$ such that $Y_m \nsubseteq H$ for every $m \succeq m_H$. 
\end{defi}

Given an adelic $\bR$-Cartier divisor $\overline{D} \in \widehat{\Div}(X)_\bR$ on $X$, the essential minimum of $\overline{D}$ is  defined as
\[\zeta_{\mathrm{ess}}(\overline{D}) = \sup_{Z \varsubsetneq X_{\overline{K}}} \inf_{x \in X_{\overline{K}} \setminus Z} \widehat{h}_{\overline{D}}(x),\]
where the supremum is over all the proper Zariski-closed subsets $Z$ of $X_{\overline{K}}$. We have the following variant of Zhang's Theorem on minima \cite[Theorem 5.2]{Zhangplav}.
\begin{theorem}\label{thmZhang}  Let $\overline{D} = (D,(g_v)_{v \in \Sigma_K}) \in \widehat{\Div}(X)_\bR$. If  $D$ is big and  $\overline{D}$ is semi-positive, then
\[\zeta_{\mathrm{ess}}(\overline{D})  \geq \widehat{h}_{\overline{D}}(X) \geq\zeta_{\mathrm{abs}}(\overline{D}).\]  
\end{theorem}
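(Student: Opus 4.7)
The two inequalities are independent, and the right one is essentially bookkeeping. By Corollary \ref{lemmaminabsinfsubvar} (which relies on Moriwaki's Hodge-index type Theorem \ref{thmHodge}),
\[\zeta_{\mathrm{abs}}(\overline{D}) = \inf_{\substack{Y \subseteq X_{\overline{K}} \\ \deg_D(Y) > 0}} \widehat{h}_{\overline{D}}(Y).\]
Since $D$ is big we have $\deg_D(X) > 0$, so $X$ itself is a legitimate competitor in this infimum, giving $\zeta_{\mathrm{abs}}(\overline{D}) \leq \widehat{h}_{\overline{D}}(X)$.

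The substantive inequality is $\zeta_{\mathrm{ess}}(\overline{D}) \geq \widehat{h}_{\overline{D}}(X)$, and here my plan is to feed the identification of the essential minimum with the maximum of the concave transform, namely $\zeta_{\mathrm{ess}}(\overline{D}) = \sup_{\alpha \in \Delta_\nu(D)} G_{\overline{D},\nu}(\alpha)$, which is \cite[Corollary 1.3]{BaEM} (recalled in the introduction as the counterpart to Corollary \ref{corominabsintro}), into the integral formula in the second part of Theorem \ref{thmBC}. Fix any $\nu \in \mathcal{V}(X_{\overline{K}})$, set
\[ t_\infty := \zeta_{\mathrm{ess}}(\overline{D}) = \sup_{\alpha \in \Delta_\nu(D)} G_{\overline{D},\nu}(\alpha), \]
and observe that by definition of $t_\infty$ one has $\max\{t_\infty, G_{\overline{D},\nu}(\alpha)\} = t_\infty$ pointwise on $\Delta_\nu(D)$. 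Since $\overline{D}$ is semi-positive, Theorem \ref{thmBC} applied with $t = t_\infty$ yields
\[h_{\overline{D}}(X) \leq (d+1)! \int_{\Delta_\nu(D)} t_\infty \, d\mu_{\bR^d} = (d+1)! \, t_\infty \, \mu_{\bR^d}(\Delta_\nu(D)).\]
Because $D$ is nef (coming from semi-positivity) and big, $\deg_D(X) = \vol(D) = d!\, \mu_{\bR^d}(\Delta_\nu(D))$, so dividing by $(d+1)\deg_D(X) > 0$ produces $\widehat{h}_{\overline{D}}(X) \leq t_\infty = \zeta_{\mathrm{ess}}(\overline{D})$.

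Overall the argument is short once both ingredients are in place; the genuine difficulty is packaged in \cite[Corollary 1.3]{BaEM}, and I would not try to reprove it here. The only minor subtlety worth flagging is the formally degenerate case $t_\infty = -\infty$: there $G_{\overline{D},\nu} \equiv -\infty$ on $\Delta_\nu(D)$, so letting $t \to -\infty$ in Theorem \ref{thmBC} forces $h_{\overline{D}}(X) = -\infty$ and the inequality $\zeta_{\mathrm{ess}}(\overline{D}) \geq \widehat{h}_{\overline{D}}(X)$ remains valid trivially.
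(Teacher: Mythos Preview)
Your proof is correct. For the second inequality $\widehat{h}_{\overline{D}}(X) \geq \zeta_{\mathrm{abs}}(\overline{D})$, you use Corollary \ref{lemmaminabsinfsubvar} exactly as the paper does.

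For the first inequality, the paper simply cites \cite[Theorem 1.5]{BaEM} as a black box, whereas you take a slightly different route: you invoke the identification $\zeta_{\mathrm{ess}}(\overline{D}) = \max_{\Delta_\nu(D)} G_{\overline{D},\nu}$ from \cite{BaEM} and then plug $t = t_\infty$ into the integral inequality of Theorem \ref{thmBC}. Both arguments outsource the real work to \cite{BaEM}, so the difference is mostly cosmetic; your version has the mild advantage of staying within the concave-transform formalism developed in the paper, at the cost of one extra step. One small remark: the result you quote as \cite[Corollary 1.3]{BaEM} is invoked later in the paper (proof of Theorem \ref{thmgeneric}) as \cite[Proposition 7.1]{BaEM} and is applied there with $\nu$ centered at a regular point, so you may want to impose that harmless restriction rather than writing ``fix any $\nu$''. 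The degenerate case $t_\infty = -\infty$ does not actually occur when $D$ is big (the essential minimum is finite), so that paragraph can be dropped.
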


\begin{proof} 
 For the first inequality, see \cite[Theorem 1.5]{BaEM}. The second one is an immediate consequence of Corollary \ref{lemmaminabsinfsubvar}.
\end{proof}

The following theorem gives criteria for equalities to hold in Theorem \ref{thmZhang}.  By Corollary \ref{lemmaminabsinfsubvar}, it implies Theorem \ref{thmgenericintro}.
\begin{theorem}\label{thmgeneric}
 Let $\overline{D} = (D,(g_v)_{v\in \Sigma_K}) \in \widehat{\Div}(X)_\bR$ be semi-positive, with $D$ big. The following conditions are equivalent:
 \begin{enumerate}
 \item\label{itemgenericminabs} $\zeta_{\mathrm{abs}}(\overline{D}) = \widehat{h}_{\overline{D}}(X)$;
 \item\label{itemgenericminess} $\zeta_{\mathrm{ess}}(\overline{D}) = \widehat{h}_{\overline{D}}(X)$;
 \item\label{itemgenericpoints} there exists a generic net of points $(p_m)_{m}$ in $X_{\overline{K}}$ such that 
 \[\lim_m \widehat{h}_{\overline{D}}(p_m) = \widehat{h}_{\overline{D}}(X);\]
 \item\label{itemgenericsubv} there exists a generic net of subvarieties $(Y_m)_{m}$ in $X_{\overline{K}}$ such that 
 \[\lim_m \widehat{h}_{\overline{D}}(Y_m) = \zeta_{\mathrm{abs}}(\overline{D}).\]
  \end{enumerate}
\end{theorem}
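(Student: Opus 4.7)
The core strategy is to translate each of the four conditions into a property of the Boucksom--Chen concave transform $G_{\overline{D},\nu}$ on the Okounkov body $\Delta_\nu(D)$, for an arbitrary but fixed $\nu \in \mathcal{V}(X_{\overline{K}})$. Applying the second part of Theorem~\ref{thmBC} with $t = \zeta_{\mathrm{abs}}(\overline{D})$ (which is legitimate because Corollary~\ref{corominabsintro} gives $G_{\overline{D},\nu} \geq \zeta_{\mathrm{abs}}(\overline{D})$ on $\Delta_\nu(D)$, so $\max\{t,G_{\overline{D},\nu}\} = G_{\overline{D},\nu}$), and using the identity $\deg_D(X) = d!\,\mu_{\bR^d}(\Delta_\nu(D))$, one obtains
\[
\widehat{h}_{\overline{D}}(X) \;=\; \frac{1}{\mu_{\bR^d}(\Delta_{\nu}(D))} \int_{\Delta_{\nu}(D)} G_{\overline{D},\nu}\, d\mu_{\bR^d}.
\]
Hence $\widehat{h}_{\overline{D}}(X)$ is the mean value of $G_{\overline{D},\nu}$. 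Combined with $\zeta_{\mathrm{abs}}(\overline{D}) = \inf G_{\overline{D},\nu}$ (Corollary~\ref{corominabsintro}) and $\zeta_{\mathrm{ess}}(\overline{D}) = \sup G_{\overline{D},\nu}$ (\cite[Corollary~1.3]{BaEM}), conditions (1) and (2) express, respectively, that the mean of $G_{\overline{D},\nu}$ agrees with its infimum or its supremum. Since $G_{\overline{D},\nu}$ is concave and upper semi-continuous on a convex body with non-empty interior, either equality forces $G_{\overline{D},\nu}$ to be constant; conversely, constancy trivially yields both equalities. By \cite[Theorem~1.5]{BaEM}, constancy of $G_{\overline{D},\nu}$ is in turn equivalent to~(3). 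This establishes $(1) \Leftrightarrow (2) \Leftrightarrow (3)$.

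The implication $(1) \Rightarrow (4)$ is immediate: having established $(1) \Leftrightarrow (3)$, one applies~(3) to produce a generic net of points $(p_m)$ with $\widehat{h}_{\overline{D}}(p_m) \to \widehat{h}_{\overline{D}}(X)$, and sets $Y_m = \{p_m\}$; thanks to~(1) the corresponding limit is $\zeta_{\mathrm{abs}}(\overline{D})$. For the converse $(4) \Rightarrow (1)$, I will extract a generic net of points from the given generic net of subvarieties $(Y_m)$ satisfying $\widehat{h}_{\overline{D}}(Y_m) \to \zeta_{\mathrm{abs}}(\overline{D})$. Genericity of $(Y_m)$ and the fact that the null locus of the nef and big divisor $D$ is a proper Zariski-closed subset of $X$ force $\deg_D(Y_m) > 0$ for $m$ large enough, so Theorem~\ref{thmZhang} applies to $\overline{D}|_{Y_m}$. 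For every $\varepsilon > 0$ this produces a proper Zariski-closed subset $Z_{m,\varepsilon} \subsetneq Y_m$ with $\widehat{h}_{\overline{D}}(p) \geq \widehat{h}_{\overline{D}}(Y_m) - \varepsilon$ for $p \in Y_m \setminus Z_{m,\varepsilon}$, together with the inequality $\zeta_{\mathrm{abs}}(\overline{D}|_{Y_m}) \leq \widehat{h}_{\overline{D}}(Y_m) \to \zeta_{\mathrm{abs}}(\overline{D})$ (so that $\zeta_{\mathrm{abs}}(\overline{D}|_{Y_m}) \to \zeta_{\mathrm{abs}}(\overline{D})$ by squeezing). A standard diagonal construction indexed by tuples $(m,H,\varepsilon)$, with $H$ ranging over proper Zariski-closed subsets of $X$, then yields a generic net of points $(p_\beta)$ with $p_\beta \in Y_m \setminus (Z_{m,\varepsilon} \cup H)$ and heights satisfying $\widehat{h}_{\overline{D}}(Y_m) - \varepsilon \leq \widehat{h}_{\overline{D}}(p_\beta) \leq \zeta_{\mathrm{abs}}(\overline{D}|_{Y_m}) + \varepsilon$, hence $\lim_\beta \widehat{h}_{\overline{D}}(p_\beta) = \zeta_{\mathrm{abs}}(\overline{D})$. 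This forces $\zeta_{\mathrm{ess}}(\overline{D}) \leq \zeta_{\mathrm{abs}}(\overline{D})$; together with Theorem~\ref{thmZhang} and Corollary~\ref{lemmaminabsinfsubvar}, we conclude $\widehat{h}_{\overline{D}}(X) = \zeta_{\mathrm{abs}}(\overline{D})$, i.e.,~(1).

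The main obstacle lies precisely in the extraction of the point net $(p_\beta)$ in $(4) \Rightarrow (1)$: one must select each $p_\beta$ so as to simultaneously (i) avoid the prescribed closed set $H \cup Z_{m,\varepsilon}$ (for genericity and the lower height bound), and (ii) have height close to $\zeta_{\mathrm{abs}}(\overline{D}|_{Y_m})$ (for the upper height bound). Compatibility amounts to verifying that the low-height locus of $Y_m$ is not entirely contained in a fixed proper Zariski-closed subset of $X$, which is the delicate point to be settled and will rest on combining the genericity of $(Y_m)$ with Theorem~\ref{thmZhang} applied to each restriction $\overline{D}|_{Y_m}$.
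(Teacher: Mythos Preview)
Your treatment of $(1)\Leftrightarrow(2)\Leftrightarrow(3)$ and $(1)\Rightarrow(4)$ is essentially the paper's argument and is correct.

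For $(4)\Rightarrow(1)$, however, your strategy has a genuine gap that you yourself flag as ``the delicate point to be settled'' but do not resolve. You want a point $p_\beta\in Y_m\setminus(Z_{m,\varepsilon}\cup H)$ with $\widehat h_{\overline D}(p_\beta)\le\zeta_{\mathrm{abs}}(\overline D|_{Y_m})+\varepsilon$. Nothing in the hypotheses prevents the low-height locus $\{p\in Y_m:\widehat h_{\overline D}(p)\le\zeta_{\mathrm{abs}}(\overline D|_{Y_m})+\varepsilon\}$ from being entirely contained in $H\cap Y_m$. The squeeze $\zeta_{\mathrm{abs}}(\overline D|_{Y_m})\to\zeta_{\mathrm{abs}}(\overline D)$ tells you there \emph{are} low-height points on $Y_m$, but gives no Zariski-density information about where they sit; Zhang's inequality on $Y_m$ controls $\zeta_{\mathrm{ess}}(\overline D|_{Y_m})$ only from \emph{below}, so it cannot force the essential minimum on $Y_m$ close to $\zeta_{\mathrm{abs}}(\overline D)$, which is precisely what you would need to pick low-height points avoiding an arbitrary $H$. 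I do not see how to close this along the line you suggest.

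The paper's argument for $(4)\Rightarrow(1)$ avoids extracting points altogether. It takes a single section $s\in\Gamma(X,nD)^\times$ with $h_{n\overline D}(s)\le-n(\zeta_{\mathrm{ess}}(\overline D)-\varepsilon)$ (available by \cite[Theorem~1.2]{BaEM}), chooses one $Y_{m_0}$ meeting $\divi(s)$ properly with $\widehat h_{\overline D}(Y_{m_0})\le\zeta_{\mathrm{abs}}(\overline D)+\varepsilon$, and applies the B\'ezout formula to obtain
\[
(r+1)\,\widehat h_{\overline D}(Y_{m_0})-r\,\widehat h_{\overline D}(\divi(s)\cdot Y_{m_0})\;\ge\;\widehat h_{\overline D}(X)-\varepsilon.
\]
Combined with $\widehat h_{\overline D}(\divi(s)\cdot Y_{m_0})\ge\zeta_{\mathrm{abs}}(\overline D)$ from Corollary~\ref{lemmaminabsinfsubvar}, this gives $\zeta_{\mathrm{abs}}(\overline D)\ge\widehat h_{\overline D}(X)-O(\varepsilon)$ directly, with no need to locate points of small height inside the $Y_m$.
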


\begin{proof} 
 Let $\nu \in \mathcal{V}(X_{\overline{K}})$ be a valuation of maximal rank centered at a regular point of $X_{\overline{K}}$. By Corollary \ref{coronefHodge}  and \cite[Proposition 7.1]{BaEM}, we have 
\begin{equation}\label{eqminG}
\zeta_{\mathrm{abs}}(\overline{D}) = \inf_{\alpha \in \Delta_{\nu}(D)} G_{\overline{D},\nu}(\alpha) \ \text{ and } \ \zeta_{\mathrm{ess}}(\overline{D}) = \max_{\alpha \in \Delta_{\nu}(D)} G_{\overline{D},\nu}(\alpha).
\end{equation}
By Boucksom--Chen's Theorem \ref{thmBC}, we have 
 \[\widehat{h}_{\overline{D}}(X) = \frac{1}{\mu_{\bR^d}(\Delta_{\nu}(D))}\int_{\Delta_{\nu}(D)} \max \{t, G_{\overline{D},\nu} \} d\mu_{\bR^d}\]
for any real number $t \leq \zeta_{\mathrm{abs}}(\overline{D})$. In view of \eqref{eqminG}, it follows that
\[\zeta_{\mathrm{ess}}(\overline{D}) =\widehat{h}_{\overline{D}}(X) \Leftrightarrow G_{\overline{D}, \nu} \text{ is constant} \Leftrightarrow \zeta_{\mathrm{abs}}(\overline{D}) =\widehat{h}_{\overline{D}}(X),\]
that is, $\eqref{itemgenericminabs} \Leftrightarrow \eqref{itemgenericminess}$.

The equivalence $\eqref{itemgenericminess} \Leftrightarrow \eqref{itemgenericpoints}$ is  \cite[Proposition 2.8]{BPRS}. If $\eqref{itemgenericpoints}$ holds, then by the above $\zeta_{\mathrm{abs}}(\overline{D})=\widehat{h}_{\overline{D}}(X)$, and therefore $\eqref{itemgenericsubv}$ holds (for the net $(Y_m)_m = (p_m)_m$). All that remains is to prove the implication $\eqref{itemgenericsubv} \Rightarrow \eqref{itemgenericminabs}$. Assume that there exists a generic net of subvarieties $(Y_m)_{m \in (\mathcal{I},\succeq)}$ in $X_{\overline{K}}$ such that 
 \[\lim_m \widehat{h}_{\overline{D}}(Y_m) = \zeta_{\mathrm{abs}}(\overline{D}),\]
and let $\varepsilon > 0$ be a real number. By \cite[Theorem 1.2]{BaEM}, there exist an integer $n \geq 1$ and a non-zero global section $s \in \Gamma(X,nD)^\times$ such that \[h_{n\overline{D}}(s) = \sum_{v \in \Sigma_{K}} \frac{[K_v :K_{0,v}]}{[K:K_0]} \ln \|s\|_{v,\sup} \leq -n(\zeta_{\mathrm{ess}}(\overline{D}) - \varepsilon) \leq -n(\widehat{h}_{\overline{D}}(X) - \varepsilon) .\] 
 Since the net $(Y_m)_{m\in (\mathcal{I},\succeq)}$ is generic, there exists $m_0 \in \mathcal{I}$ such that $\divi(s)$ intersects $Y_{m_0}$ properly, $\deg_D(Y_{m_0}) > 0$, and 
\begin{equation}\label{eqlimheightnet}
 \zeta_{\mathrm{abs}}(\overline{D}) + \varepsilon \geq \widehat{h}_{\overline{D}}(Y_{m_0}).
\end{equation} 
Let $r = \dim Y_{m_0}$. Then we have
\begin{multline}\label{eqgapheights}
(r+1)\widehat{h}_{\overline{D}}(Y_{m_0}) - r \widehat{h}_{\overline{D}}(\divi(s) \cdot Y_{m_0})\\
= \frac{1}{\deg_D(Y_{m_0})} ({h}_{\overline{D}}(Y_{m_0}) - \frac{1}{n} h_{\overline{D}}(\divi(s) \cdot Y_{m_0})) \geq - \frac{1}{n}h_{n\overline{D}}(s) \geq\widehat{h}_{\overline{D}}(X) - \varepsilon,
\end{multline}
where the penultimate inequality follows from the Bézout formula \cite[(3.13) page 225]{BMPS}.
On the other hand, we have $\widehat{h}_{\overline{D}}(\divi(s) \cdot Y_{m_0}) \geq \zeta_{\mathrm{abs}}(\overline{D})$ by Lemma \ref{lemmaminabsinfsubvar}.
Combining \eqref{eqlimheightnet} and \eqref{eqgapheights}, we have
\[ \zeta_{\mathrm{abs}}(\overline{D}) + (d+1)\varepsilon \geq (r+1)(\zeta_{\mathrm{abs}}(\overline{D}) + \varepsilon) - r\zeta_{\mathrm{abs}}(\overline{D}) \geq \widehat{h}_{\overline{D}}(X) - \varepsilon,\]
and we conclude that \eqref{itemgenericminabs} holds by letting $\varepsilon$ tend to zero. 
\end{proof}

\section*{Acknowledgements} I  thank Mart\'{i}n Sombra for valuable comments and discussions related to this work.

\bibliographystyle{alpha}
\bibliography{okounkovadelic.bib}

\medskip 

\noindent François Ballaÿ\\
 Université Clermont Auvergne, CNRS, LMBP, F-63000 Clermont-Ferrand, France. \\
 \href{mailto:francois.ballay@uca.fr}{\texttt{francois.ballay@uca.fr}}\\
 \href{http://fballay.perso.math.cnrs.fr}{\texttt{fballay.perso.math.cnrs.fr}}

\end{document}